\newcommand{\lab}[1]{\label{#1}}                 
\newtheorem{thm}{Theorem}
\newtheorem{cor}[thm]{Corollary}
\newtheorem{con}[thm]{Conjecture}
\newtheorem{lemma}[thm]{Lemma}
\newtheorem{claim}[thm]{Claim}
\newtheorem{obs}[thm]{Observation}
\newtheorem{question}[thm]{Open problem}
\def\G{\mathcal{G}}
\def\R{\mathcal{R}}
\def\B{\mathcal{B}}
\def\E{\mathcal{E}}
\def\eps{\epsilon}
\def\pr{\mathbb{P}}
\def\kvec{{\boldsymbol{k}}}
\def\wvec{{\boldsymbol{w}}}
\def\trilink{{trilink}}
\def\ex{{\mathbb E}}
\newcommand{\jc}[1]{{\color{blue}[Jane: #1]}}
\newcommand{\remove}[1]{}
\newcommand{\norm}[1]{\|#1\|}
\newcommand{\eqn}[1]{{(\ref{#1})}}
\newcommand{\ind}[1]{{\boldsymbol 1}_{\{#1\}}}
\title{The dimension of sparse and co-sparse random graph orders}
\author{Pu Gao\\ University of Waterloo\\ pu.gao@uwaterloo.ca \and Arnav Kumar \\ University of Waterloo \\ a8kumar@uwaterloo.ca }
\date{}
\begin{document}
\maketitle
\begin{abstract}
A random graph order is a partial order obtained from a random graph on $[n]$ by taking the transitive closure of the adjacency relation. The dimension of the random graph orders from random bipartite graphs $\B(n,n,p)$ and from $\G(n,p)$ were previously studied when $p=\Omega(\log n/n)$ and when $p$ is not too close to 1. There is a conjectured phase transition in the sparse range at $p=1/n$. In this paper, we investigate this conjectured phase transition and  estimate the dimension of the partial orders arising from $\B(n,n,p)$ and $\G(n,p)$ when $p=O(1/n)$. For the random bipartite order, we additionally estimate its dimension in the co-sparse regime, thereby closing all previously open ranges of $p$. Finally, we establish a general upper bound on the dimension of partial orders based on their decompositions into suborders, a result that is of independent interest.
\end{abstract}

\section{Introduction}

Partially ordered sets are a classical subject of study in combinatorics and algebra, offering foundational tools and concepts that extend further to scheduling, data mining, optimization and other sciences. The study of random partial orders dates back to Kleitman and Rothschild~\cite{kleitman1970number,kleitman1975asymptotic} around 1970 who estimated the number of partially ordered sets on $n$ elements. Their enumeration result immediately implies a number of properties for a uniformly random partially ordered set (poset) on $n$ element. For instance, asymptotically almost surely (a.a.s.) such a random poset has exactly three layers, and the number of comparable pairs is approximately  $\frac{3}{16}n^2$. 
A more general model of random poset restricted to a fixed proportion $\rho$ of comparable pairs was proposed and studied (for a certain range of $\rho$) by  Deepak Dhar~\cite{dhar1978entropy}, and this model is motivated by a statistical physics model of lattice gas with long-range three-body interactions. This model was further studied by Kleitman and Rothschild~\cite{kleitman1979phase} and by Pr\"{o}mel, Steger and Taraz~\cite{promel2001phase} for different ranges of $\rho$. Winkler~\cite{winkler1985random,winkler1985connectedness} proposed a different model of random poset that comes from the intersection of $k$ uniformly random permutations on $[n]$, and Winkler proved that for fixed $k$, the resulting random poset is a.a.s.\ connected and has diameter equal to three. 
Finally, random poset arising from random graphs were introduced and studied in~\cite{barak1984maximal} and~\cite{erdos1991dimension}.  In this model we consider a random graph $\G_n$ on vertex set $[n]$. The partial order $P=P([n],<_P)$ generated by $\G_n$ is obtained by first including all comparable pairs $i<_P j$ if $i<j$ (order in natural numbers) and $ij\in E(\G_n)$, and then taking the transitive closure. We denote this partial order by $P_{\G_n}$. In other words, $P_{\G_n}$ consists of elements in $[n]$, where two elements $i$ and $j$ are comparable with $i<_P j$ if there exists a sequence of vertices $u_1,\ldots,u_k$ such that $u_1=i$, $u_k=j$, $u_1<u_2<\cdots<u_k$ and $u_1,\ldots,u_k$ is a path in $\G_n$. Barak and Erd\H{o}s~\cite{barak1984maximal} introduced the model $P_{\G_n}$ for $\G_n=\G(n,p)$, the Erd\H{o}s-R\'{e}nyi random graph, and they estimated the width of $P_{\G(n,p)}$ when $p$ is a fixed constant between 0 and 1. Then, Albert and Frieze~\cite{albert1989random} studied the height, width, dimension and the first-order logic of $P_{\G(n,p)}$ for fixed $p$. Alon, Bollob\'{a}s and Janson~\cite{alon1994linear}
 studied the number of linear extensions of $P_{\G(n,p=1/2)}$. The structures of sparse $P_{\G(n,p)}$ when $p=o(1)$ are studied by Bollob'{a}s and Brightwell~\cite{bollobas1997structure,bollobas1997dimension,bollobas1995width} including the anti-chains, the posts, and the dimension. The model $P_{\G_n}$ for $\G_n=\B(n,n,p)$ was introduced by Erd\"{o}s, Kierstad and Trotter~\cite{erdos1991dimension}, where $\B(n,n,p)$ denotes the random bipartite graph on parts $A=[n]$ and $B=\{n+1,\ldots,2n\}$ where $ij$ is an edge with probability $p$ independently for every $(i,j)\in A\times B$. The poset $P_{\B(n,n,p)}$ has a simpler structure than $P_{\G(n,p)}$: two elements $i$ and $j$ are related by $i\le_P j$ if and only if $i\in A$, $j\in B$ and $ij$ is an edge in $\B(n,n,p)$. In other words, adjacency of vertices in $\B(n,n,p)$ corresponds precisely to the order relations in $P_{\B(n,n,p)}$. 
The authors of~\cite{erdos1991dimension} studied the dimension of $P_{\B(n,n,p)}$, and proved the following.  
\begin{thm}[Erd\"{o}s, Kierstad and Trotter] \label{thm:Erdos}
For every $\eps>0$ there exists $\delta>0$ such that for all $p$ satisfying
\[
\frac{\log^{1+\eps} n}{n} <p< 1-n^{-1+\eps},
\]
a.a.s.\ $\dim P_{\B(n,n,p)} > (\delta pn\log pn)/(1+\delta p\log pn)$.
\end{thm}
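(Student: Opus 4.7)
The dimension of $P_{\B(n,n,p)}$ equals the minimum number $d$ of linear extensions $L_1,\ldots,L_d$ of $P_{\B(n,n,p)}$ such that every critical pair (non-edge $(a,b)\in A\times B$) is reversed by some $L_i$, meaning $b<_{L_i}a$. The plan is to show that a.a.s.\ no set of $d$ extensions with $d$ below the stated threshold satisfies the cover condition, so that $\dim(P_{\B(n,n,p)})$ exceeds the bound.

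I would first reduce each $L_i$ to the ordering $\pi_i$ it induces on $A$. A non-edge $(a,b)$ is reversed by $L_i$ iff $\pi_i(a)>\max_{a'\in N(b)}\pi_i(a')$, equivalently $N(b)\cap T_i(a)=\emptyset$, where $T_i(a)=\{a'\in A:\pi_i(a')>\pi_i(a)\}$ has size $n-\pi_i(a)$. Then, for any fixed choice of orderings $\pi_1,\ldots,\pi_d$, I estimate the expected number $X$ of non-edges uncovered by all $L_i$'s, taken over the randomness of $\B(n,n,p)$. Conditioning on $a\notin N(b)$ and applying the FKG inequality to the increasing events $\{N(b)\cap T_i(a)\ne\emptyset\}$ on the Boolean lattice of $N(b)\subseteq A\setminus\{a\}$, I obtain
\[
E[X]\;\ge\;(1-p)\,n\sum_{a\in A}\prod_{i=1}^d\bigl(1-(1-p)^{n-\pi_i(a)}\bigr).
\]
Introducing $\phi(a)=\sum_{i=1}^d(1-p)^{n-\pi_i(a)}$, the identity $\sum_a\phi(a)=d(1-(1-p)^n)/p$ (because each $\pi_i$ is a permutation of $[n]$) pins down the average $\bar\phi=d/(pn)$ in the regime $pn=\omega(1)$. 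Using $\prod_i(1-x_i)\ge e^{-C\sum_i x_i}$ when the $x_i$ are bounded away from $1$, together with Jensen's inequality $\sum_a e^{-C\phi(a)}\ge n\,e^{-C\bar\phi}$, yields $E[X]\gtrsim n^2 e^{-Cd/(pn)}$. Requiring $E[X]<1$ (a prerequisite for the $d$ extensions to cover all non-edges a.a.s.) then forces $d=\Omega(pn\log n)$, and tracking constants gives the $pn\log pn$ scaling claimed.

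The main obstacle is handling elements $a$ with $\pi_i(a)$ very close to $n$ in some $\pi_i$, for which $(1-p)^{n-\pi_i(a)}$ is close to $1$ and the exponential lower bound on $\prod_i(1-x_i)$ breaks. I plan a case split: if a constant fraction of $a\in A$ satisfies $\pi_i(a)\le n-C/p$ for every $i$, the Jensen chain above applies cleanly to this large subset and yields the $\log pn$ factor; otherwise a constant fraction of $a$ lies in the top $O(1/p)$ of some $\pi_i$, and since the union of all these top segments has size at most $O(d/p)$, this case already forces $d=\Omega(pn)$. The combined lower bound takes the form $\Omega(pn\log pn/(1+p\log pn))$, with the denominator reflecting the crossover to the $d=\Theta(n)$ regime when $p$ is not too small. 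A second-moment calculation on $X$ then converts the expectation bound into the desired a.a.s.\ statement.
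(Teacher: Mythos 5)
This theorem is not proven in the paper; it is cited from Erd\H{o}s--Kierstead--Trotter~\cite{erdos1991dimension}. So I will assess your proposal against the standard approach and against the closely related argument this paper itself uses in Section~4.1 (Lemma~\ref{lem:Bproperties}, Lemma~\ref{lem:reversible}) for its Theorem~\ref{thm:mainbipartite}(a).

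Your proposal has a genuine gap in the order of quantifiers, and I do not see how the plan as written can close it. You fix a tuple $(\pi_1,\ldots,\pi_d)$ of orderings of $A$, compute $E[X]$ over the randomness of $\B(n,n,p)$, and argue that $E[X]<1$ is ``a prerequisite'' for those $d$ extensions to work. But to prove $\dim P > d$ a.a.s.\ you must show that, for the revealed random graph, \emph{every} tuple of $d$ orderings fails to reverse some critical pair. The adversary chooses the orderings after seeing the graph, so a first-moment (or first-plus-second-moment) bound for a single fixed tuple does not suffice. A union bound over tuples has $(n!)^d = e^{\Theta(dn\log n)}$ terms, and the tail bound your second-moment step would give (at best polynomially small failure probability per tuple, typically $O(1/E[X])$) is nowhere near small enough. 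Nothing in your sketch supplies the superexponential concentration that would be needed to survive that union bound, and FKG plus Jensen will not deliver it.

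The way both the ERT proof and this paper's analogous argument avoid this issue is to establish a \emph{uniform structural property} of the random graph that holds a.a.s.\ and then deduce, deterministically from that property, that \emph{every} linear extension reverses only a bounded number of critical pairs. In this paper's version, the property is an expansion statement (Lemma~\ref{lem:Bproperties}(b): for all $S\subseteq A$, $T\subseteq B$ with $|S|,|T|\ge\alpha n$ there is an $S$--$T$ edge), and Lemma~\ref{lem:reversible} then shows purely combinatorially that any linear extension $L$ reverses at most $2\alpha n^2$ pairs. Dividing the $(1-o(1))n^2$ incomparable pairs by the per-extension cap gives the lower bound. Because the expansion property quantifies over all subsets at once, the ``for all orderings'' difficulty never arises: there is one probabilistic event, proved by a single union bound over sets of size $\alpha n$ (which is tractable), and everything downstream is deterministic. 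Your plan has the right first-moment flavour --- reversed pairs are scarce for a typical ordering --- but it attaches the probability to the graph conditional on a fixed realiser rather than extracting a once-and-for-all property of the graph, and that is exactly what the argument needs to be correct. To repair your approach you would need to replace the per-tuple expectation argument with an a.a.s.\ graph property (expansion, or a pseudorandomness statement about neighbourhoods) that implies a deterministic upper bound on the number of critical pairs any single linear extension can reverse.

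Two smaller remarks. First, the FKG step and the identity $\sum_a\phi(a)=d(1-(1-p)^n)/p$ are fine as far as they go, and the resulting $E[X]\gtrsim n^2 e^{-Cd/(pn)}$ heuristic does correctly predict the $\Theta(pn\log pn)$ scaling and the saturation near $d=\Theta(n)$; this is useful intuition even though it is not a proof. Second, the case split you propose for handling $\pi_i(a)$ near $n$ is sensible as an accounting device, but it does not address the quantifier gap, which is the central obstacle.
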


Their lower bound is tight up to a logarithmic factor, by the upper bound given by F{\"u}rhedi and Kahn~\cite{furedi1986dimensions}; see Theorem~\ref{thm:Kahn} below.
In the same paper~\cite{erdos1991dimension}, they also asked many questions regarding $\dim P_{\B(n,n,p)}$, such as whether $\dim P_{\B(n,n,p)}$ is unimodal as $p$ increases? When does $\dim P_{\B(n,n,p)}$ become at least $k$ for constant $k$? What happens to $\dim P_{\B(n,n,p)}$ when $p$ is very close to 1? These questions remained open. As a comparison, Bollob\'{a}s and Brightwell~\cite{bollobas1997dimension} proved the following regarding $\dim P_{\G(n,p)}$, improving earlier bounds given by Albert and Frieze~\cite{albert1989random}, and extending the range of $p$ under study.

\begin{thm}[Bollob\'{a}s and Brightwell]\label{thm:BB}  The following hold a.a.s.\ for $\dim P_{\G(n,p)}$:
\begin{enumerate}[(a)]
\item If $1/\log\log n\ll p\le 1-1/\sqrt{\log n}$ then for every $\eps>0$ 
\[
(1-\eps)\sqrt{\frac{\log n}{\log(1/(1-p))}}\le \dim P_{\G(n,p)} \le (1+\eps)\sqrt{\frac{4\log n}{3\log(1/(1-p))}}.
\]
\item If $\log n/n\le p\ll 1/\log n$ then $\dim P_{\G(n,p)}=\Theta(p^{-1})$.
\item If $\tfrac{4}{5}\log n/n\le p\le (\log n-\log\log n)/n$ then there exist fixed $c_1,c_2>0$ such that 
\[
c_1e^{pn}\le \dim P_{\G(n,p)} \le c_2 e^{pn}\log^3 n.
\]
\end{enumerate}
\end{thm}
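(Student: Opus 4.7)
The plan is to handle the three ranges of $p$ separately, since they correspond to qualitatively different structures of $\G(n,p)$. Throughout, I would use the characterization that $\dim P$ equals the minimum number of linear extensions of $P$ whose intersection is $P$, equivalently the minimum number of extensions reversing every critical (incomparable) pair; this recasts the problem as a coloring/covering problem on the incomparability structure of $P_{\G(n,p)}$.

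For part (a), the dense range, the upper bound calls for the probabilistic method: partition $[n]$ into contiguous blocks of a carefully chosen length, build each of $d\assign(1+\eps)\sqrt{4\log n/(3\log(1/(1-p)))}$ linear extensions by an independent random permutation within each block, and show that the probability a given incomparable pair $(i,j)$ is left unreversed by all $d$ extensions decays fast enough (the exponent emerging from the need to simultaneously place the downset of $i$ below $j$ and the upset of $j$ above $i$, an event whose probability scales with $(1-p)^{\Theta(d)}$ in the block). A union bound over the at most $\binom{n}{2}$ incomparable pairs then closes the argument, and the $\sqrt{\phantom{X}}$ structure of the exponent is exactly what balances the tradeoff. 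For the matching lower bound I would exhibit a standard example of dimension $(1-\eps)\sqrt{\log n/\log(1/(1-p))}$ embedded in $P_{\G(n,p)}$, obtained by a second-moment count on candidate $2k$-tuples of vertices whose induced comparability pattern is that of $S_k$.

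For part (b), in the sparser range $\log n/n\le p\ll 1/\log n$, the key structural fact is that $P_{\G(n,p)}$ has a natural layered structure coming from the typical shortness of chains. The upper bound $O(1/p)$ should follow by partitioning $[n]$ into $\Theta(1/p)$ contiguous intervals and assembling one linear extension per interval that reverses the incomparable pairs it controls; the $\Omega(1/p)$ lower bound would come from locating many disjoint small standard examples inside a single typical interval, using the first moment to guarantee their existence. For part (c), near the connectivity threshold, I would exploit that a.a.s.\ $\Theta(ne^{-pn})$ vertices are isolated in $\G(n,p)$ and hence form an antichain in $P_{\G(n,p)}$; combining these isolated vertices with a long increasing path yields a standard example of size $\Omega(e^{pn})$ and gives the lower bound $c_1 e^{pn}$. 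The upper bound $c_2 e^{pn}\log^3 n$ I would attempt via a block decomposition together with a random-extension construction in the spirit of Erd\H{o}s, Kierstad and Trotter, the $\log^3 n$ absorbing a union bound over the (random) positions of the relatively rare nontrivial components.

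The main obstacle across all three regimes is the lower bound: one must certify large dimension by embedding a suitable standard example inside a random poset whose comparability relations are themselves random, which requires a second-moment calculation delicately tuned to the regime and, in parts (b)--(c), a precise understanding of how the component structure of $\G(n,p)$ translates into the induced order. The upper bounds, by contrast, are mainly a matter of choosing block sizes and the number of extensions so that a union bound over critical pairs succeeds.
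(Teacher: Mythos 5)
This theorem is not proved in the paper you were given: it is a quoted result of Bollob\'as and Brightwell, cited from \cite{bollobas1997dimension}, and stated here only as background and motivation. So there is no internal ``paper's proof'' for your sketch to be checked against, and you should be aware that any actual proof would require reproducing the arguments of that earlier paper (which revolve around the structure of \emph{posts} in $P_{\G(n,p)}$ and careful first-/second-moment analyses in each regime).

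That said, one step in your sketch is concretely wrong and worth flagging. In part~(c) you propose getting the lower bound $c_1 e^{pn}$ by noting that a.a.s.\ $\Theta(ne^{-pn})$ vertices are isolated, and then ``combining these isolated vertices with a long increasing path'' to obtain a standard example of size $\Omega(e^{pn})$. This cannot work. First, an isolated vertex of $\G(n,p)$ is incomparable to \emph{every} other element of $P_{\G(n,p)}$, so the isolated vertices form a pure antichain whose contribution to dimension is at most~$2$; they cannot serve as the minimal or maximal elements of a standard example $S_k$, since those must participate in many comparabilities. Second, the quantities do not even match: at $p=\tfrac{4}{5}\log n/n$ the number of isolated vertices is $\Theta(ne^{-pn})=\Theta(n^{1/5})$ while $e^{pn}=n^{4/5}$, and at $p=(\log n-\log\log n)/n$ the isolated count is $\Theta(\log n)$ while $e^{pn}=n/\log n$; in this entire range $ne^{-pn}\ll e^{pn}$, so the isolated vertices are far too few. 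The lower bound in this regime has to come from the comparability structure of the non-trivial part of the order, not from the isolated points. The heuristics in parts~(a) and~(b) are broadly in the right spirit, but without the post-based decomposition they remain far from a proof and cannot be assessed against a proof that this paper does not contain.
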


In view of Theorem~\ref{thm:Erdos}, there is a phase transition around $p=1/\log n$. Afterwards, for $1/\log n\ll p < 1-n^{-1+\eps}$, $\dim P_{\B(n,n,p)}\sim n$. There must be at least another phase transition at some $p > 1-n^{-1+\eps}$ since $\dim P_{\B(n,n,p)}$ eventually drops back to two.  In this paper, we capture this phase transition by determining the asymptotic order of the dimension of $P_{\B(n,n,p)}$ for all $p\ge 1- n^{-1+1/7}$,  addressing a question (see~\cite[Open problem (5.8)]{erdos1991dimension}) of Erd\H{o}s, Kierstead and Trotter: see Theorem~\ref{thm:bipartiteLargep} below. Theorem~\ref{thm:BB} shows that the behaviour of 
$\dim P_{\G(n,p)}$ is quite different from $\dim P_{\B(n,n,p)}$: there are phase transitions around $p=\log n/n$ and then around $p=1/\log n$. Bollob\'{a}s and Brightwell predicted ``a third transition phase, as $p$ decreases through $c/n$, since the underlying random graph changes from being almost all in one component, to being a collection of small trees''. It is known that $\dim P_{\G(n,p=c/n)}\le 4$ when $c<1$ (see Theorems~\ref{thm:Trotter} and~\ref{thm:Brightwell} below), since $\G(n,p=c/n)$ is a collection of small trees and unicyclic components. When $c>1$, $\G(n,p=c/n)$ contains a giant component whose order is linear in $n$, and has many cycles and other complicated structures. For instance, for every fixed $k\ge 1$, $\G(n,p=c/n)$ has a subgraph with minimum degree at least $k$, provided that $c$ is sufficiently large (depending on $k$). Does ``a third transition phase'' appears in the way that right after $c$ passes 1, $\dim P_{\G(n,p)}$ immediately blows up to $\omega(1)$? Indeed, Bollob\'{a}s and Brightwell specifically asked in~\cite{bollobas1997dimension} what happens to $\dim P_{\G(n,p)}$ if $p=c/n$ with fixed $c>1$.

In this paper we address the question of this predicted ``third transition phase'', and in particular, whether the phase transition of random graph structure from a collection of small tree or unicyclic components to the appearance of a giant component is associated with a sudden jump of the dimension of the random graph order from at most four to $\omega(1)$. Rather surprisingly, 
our result is negative: see Theorems~\ref{thm:mainbipartite} -- \ref{thm:mainnonbipartite} in Section~\ref{sec:results}.

\subsection{Preliminary and notations}

Let $P$ be a poset on a finite set of elements $V$. Without loss of generality we denote $V$ by $[n]=\{1,2,\ldots,n\}$ where $n$ is the number of elements in $V$.  We always use  $<_P$  to denote the order in $P$, and use $<$ to denote the order in natural numbers. For $x,y\in V$, we use $x||y$ to denote that $x$ and $y$ are incomparable in $P$.

A realiser of a partial order $P$ on a finite set of elements $V$ is a set of total orders $\R=\{L_1,\ldots, L_k\}$ on $V$ satisfying the following conditions.
\begin{enumerate}[(a)]
    \item if $i<_P j$ in $P$ then $i<_{L}j$ for every $L\in \R$; and
    \item if $i || j$ in $P$ then $i<_{L}j$ and $j<_{L'}i$ for some $L,L'\in\R$.
\end{enumerate}
The dimension of $P$ is defined by the size of a minimum realiser of $P$.

We say a poset $P$ is bipartite if the set of elements can be partitioned into two parts $A$ and $B$ such that if $x<_P y$ then $x\in A$ and $y\in B$, and we use $P(A,B)$ to denote a bipartite poset, where $A,B$ is a bipartition of the elements. Given a poset $P$ and a subset $S$ of the elements in $P$, let $P[S]$ be the sub-poset of $P$ induced by $S$, i.e.\ two elements $u,v$ in $S$ have the same relation in $P[S]$ as they are in $P$.

Given a poset $P$, the dual of $P$ is defined on the same set of elements of $P$, such that for every $x\neq y$, $x\parallel y$ in $P'$ if and only if $x\parallel y$ in $P$, and $x<y$ in $P'$ if and only if $y<x$ in $P$. Obviously, $\dim P'=\dim P$.

Given a poset $P$, the comparable graph of $P$ is the graph defined on $V(P)$, the set of elements in $P$, where two elements are adjacent if they are comparable in $P$. On the other hand, the cover graph (also called the Hasse diagram) of $P$ is the graph defined on $V(P)$ such that $xy$ are adjacent in the cover graph if $x\neq y$, $x<_P y$ and there is no $z$ distinct from $x$ and $y$ such that $x<_P z <_P y$. Trotter and Moore proved that if $P$ has a cover graph that is isomorphic to a tree then the dimension of $P$ is at most three. 

\begin{thm}[\cite{trotter1977dimension}]\label{thm:Trotter}
Let $P$ be a poset whose cover graph is a tree. Then, $\dim P \leq 3$.
\end{thm}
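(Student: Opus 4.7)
My plan is to construct an explicit size-three realizer of $P$ using the rooted tree structure of the cover graph. Root $T$ at an arbitrary vertex $r$, so every non-root vertex $v$ has a parent $p(v)$; call the tree edge $\{v, p(v)\}$ a \emph{down-edge} if $v <_P p(v)$ and an \emph{up-edge} otherwise. The key fact I would use throughout is that, because cover relations coincide with tree edges, $u <_P v$ holds if and only if the unique tree path from $u$ to $v$ has every edge oriented $P$-monotonically toward $v$.

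Given any choice of left-to-right ordering of the children at each internal vertex of $T$, I would define a DFS-based linear extension of $P$ as follows: at a vertex $v$, first recursively process the down-child subtrees of $v$ in the chosen order, then emit $v$, then recursively process the up-child subtrees. The monotone-path characterization above implies the resulting sequence is a linear extension. The task is then to choose three such orderings whose associated extensions $L_1, L_2, L_3$ form a realizer. For every incomparable pair $(x,y)$, the tree path $x = v_0, \ldots, v_k = y$ is not $P$-monotone, so there is an interior \emph{peak} (local $P$-maximum on the path) or \emph{valley} (local $P$-minimum). I would show that swapping the two path-children at such a peak or valley swaps the relative DFS-order of $x$ and $y$, and then select three child-orderings---for instance, one canonical and two reversal variants---so that every incomparable pair is reversed in at least one $L_i$.

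The main obstacle I foresee is that the above DFS is too rigid for certain pairs: if the lowest common ancestor $w$ of $x$ and $y$ in the rooted tree has the path-child toward $x$ as a down-child and the path-child toward $y$ as an up-child, then $x$ must precede $w$ must precede $y$ in every DFS of this form, regardless of the child-ordering, and the peaks/valleys lie elsewhere on the path in subtrees of $w$ that are entirely before or entirely after $w$ in the emission order. Overcoming this will require enriching the construction, most likely by allowing the emit-point of $v$ to interleave more flexibly with its subtrees---since not every element of a down-child subtree is $<_P v$, the ``rigid barrier'' at $v$ can be relaxed---or by combining peak-type with valley-type reversals across the three extensions. Verifying that a single triple of orderings simultaneously reverses every incomparable pair is the central combinatorial difficulty.
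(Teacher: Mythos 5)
The paper does not prove this theorem; it is cited from Trotter and Moore~\cite{trotter1977dimension}, so there is no internal proof for your attempt to be compared against. Judged on its own terms, your proposal is a plan with an acknowledged and genuine gap, not a proof. The ``rigid barrier'' you flag is fatal to the construction as stated: if $w$ is the lowest common ancestor of $x$ and $y$ and the path-children of $w$ toward $x$ and $y$ are a down-child $d$ and an up-child $u$ respectively, then every DFS of your form emits $d$'s entire subtree, then $w$, then $u$'s entire subtree, so $x <_{L_i} w <_{L_i} y$ in all three of your extensions regardless of how children are permuted at any vertex. The interior peak or valley forced by $x \parallel y$ lies wholly inside one of these two subtrees and cannot move $y$ ahead of $x$. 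Such pairs genuinely occur---$x\parallel y$ is consistent with $d <_P w <_P u$ whenever at least one of the sub-paths from $x$ to $d$ or from $u$ to $y$ is non-monotone---so the three extensions you build cannot realize $P$. The repair you gesture at (not emitting $v$ rigidly between its down-subtrees and its up-subtrees, which is indeed legitimate since an element of a down-subtree of $v$ need not lie below $v$ in $P$) is the right direction but is precisely the missing content: you would have to design the relaxed emission rule, prove the result is still a linear extension, and then carry out the case analysis showing that every incomparable pair is reversed by some $L_i$. Until that is done, this remains a sketch with an open hole rather than a proof.
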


How about poset whose cover graph has a unique cycle? Bollob\'{a}s and Brightwell~\cite{bollobas1997dimension} conjectured that such posets have dimension at most three. Abram and Segovia recently reported solving this conjecture~\cite{AS}. The best known upper bound before~\cite{AS} is four, which follows from the upper bound on the dimension of a poset whose cover graph is outerplanar~\cite{felsner2015dimension}. There is also a short proof provided by Brightwell (from personal communications). Since his proof is short and self-contained, we include it below for completeness. Given a total ordering $L$ on a set of elements $V$ and $S\subseteq V$, let $L|_S$ denote the total ordering on $S$ obtained by restricting $L$ on $S$. Given a subset of elements $S$ of $P$, let $U_P[S]$ and $D_P[S]$ denote the up-set and the down-set of $S$, i.e.\ 
\begin{equation}U_P[S]=\{x:\ y\le_P x\ \text{for some $y\in S$}\},\quad D_P[S]=\{x:\ x\le_P y\ \text{for some $y\in S$}\}.\label{def:UD}
\end{equation}We may drop the subscript $P$ when it is clear from the context. If $S=\{x\}$ we write $U[x]$ for $U[\{x\}]$, and we write $U(x)$ for $U[x]\setminus \{x\}$.

\begin{thm}\label{thm:Brightwell}

Let $P$ be a poset with unicyclic cover graph. Then, $\dim P \leq 4$.
\end{thm}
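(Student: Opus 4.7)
My plan is to reduce to the tree case. Pick a cover edge $e=uv$ on the unique cycle $C$ with $u\lessdot_P v$, and let $P^{*}$ denote the poset whose cover graph is obtained from that of $P$ by deleting $e$. Then $P^{*}$ has a tree cover graph, so Theorem~\ref{thm:Trotter} yields $\dim P^{*}\le 3$; fix a realiser $\{M_1,M_2,M_3\}$ of $P^{*}$. A short analysis shows that every relation in $P\setminus P^{*}$ lies in $D\times U$, where $D:=D_P[u]$ and $U:=U_P[v]$, that $D\cap U=\emptyset$ (else $v\le_P u$), and that $D$ is a down-set and $U$ an up-set of both $P$ and $P^{*}$.

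From each $M_i$ I would produce a linear extension $L_i$ of $P$ by moving the block $D$ to the front of $M_i$ while preserving the relative $M_i$-order inside $D$ and inside $V\setminus D$. Using only that $D$ is downward closed in $P$ and $P\setminus P^{*}\subseteq D\times(V\setminus D)$, one verifies $L_i\in\op{Ext}(P)$. The three extensions $\{L_1,L_2,L_3\}$ already realise every incomparable pair of $P$ whose two elements lie on the same side of the partition $\{D,V\setminus D\}$, inheriting both orders from $\{M_i\}$. Every remaining incomparable pair $(a,b)$ has $a\in D$ and $b\in V\setminus D$, and is realised by $\{L_1,L_2,L_3\}$ only in the direction $a<_{L_i}b$; moreover no such $b$ can lie in $U$ (otherwise $a\le_P u<_P v\le_P b$ would force $a<_P b$), so $b\in V\setminus(D\cup U)$.

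The fourth linear extension $L_4$ must therefore supply $b<_{L_4}a$ for each of these remaining pairs while respecting all relations of $P$. My plan is to build $L_4$ from the tree cover graph $T$ of $P^{*}$: the unique path $\pi$ from $u$ to $v$ in $T$ cannot be monotone in $P^{*}$ (otherwise $e$ would not be a cover of $P$), and its non-monotonicity will allow $V\setminus(D\cup U)$ to be split into a piece that can safely be placed before $D$ in some linear extension of $P$ and a piece that must follow $D$, from which an appropriate $L_4\in\op{Ext}(P)$ is assembled.

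The hard part is the construction and verification of $L_4$. A naive attempt to reverse every problematic pair in a single extension fails: small unicyclic examples produce cyclic constraints among the desired reversals, so $L_4$ must reverse only a carefully selected subset while ensuring that the pairs it leaves untouched are already covered in the required direction by one of $L_1,L_2,L_3$. The unicyclic hypothesis is essential here, since a second cycle in the cover graph would allow standard-example configurations that push the dimension above four.
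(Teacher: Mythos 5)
Your reduction breaks at exactly the point you flag as ``the hard part,'' and the workaround you sketch cannot work as stated.

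Since each $L_i$ is obtained from $M_i$ by moving the block $D$ to the front, every $L_i$ places \emph{all} of $D$ before \emph{all} of $V\setminus D$. Consequently, for an incomparable pair $(a,b)$ with $a\in D$ and $b\in V\setminus(D\cup U)$, each of $L_1,L_2,L_3$ gives $a<_{L_i}b$ and none gives $b<_{L_i}a$ --- the reversals that $\{M_i\}$ supplied for these pairs are all destroyed by the block move. Hence $L_4$ must supply $b<_{L_4}a$ for \emph{every} such pair, exactly as you first wrote. Your later suggestion that $L_4$ need only reverse ``a carefully selected subset while ensuring that the pairs it leaves untouched are already covered in the required direction by one of $L_1,L_2,L_3$'' is therefore internally inconsistent: $L_1,L_2,L_3$ never provide the direction $b<a$ for any pair straddling $D$ and $V\setminus D$, so there are no pairs that can safely be ``left untouched.''

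Moreover, the required $L_4$ genuinely need not exist for a bad choice of $e$. Take $V=\{u,v,c,w_1,w_2,b_2\}$ with cover relations $u\lessdot v$, $c\lessdot v$, $w_1\lessdot c$, $w_1\lessdot u$, $w_2\lessdot u$, $w_2\lessdot b_2$; the cover graph is unicyclic with cycle $u$--$v$--$c$--$w_1$--$u$. Choosing $e=uv$ gives $D=\{u,w_1,w_2\}$, $U=\{v\}$, $V\setminus(D\cup U)=\{c,b_2\}$. The problematic pairs include $(w_1,b_2)$ and $(w_2,c)$, while $P$ itself forces $w_1<c$ and $w_2<b_2$. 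Any $L_4$ with $b_2<_{L_4}w_1$ and $c<_{L_4}w_2$ would produce $b_2<w_1<c<w_2<b_2$, a cycle; so no single linear extension can reverse all the remaining pairs, and $\{L_1,L_2,L_3,L_4\}$ cannot be a realiser. (In this particular example a \emph{different} choice of cycle edge $e$ does admit the naive $L_4$, so a more careful selection of $e$ might rescue the scheme; but you neither specify such a choice nor prove that one always exists, and the structural claim about the non-monotone path $\pi$ that you invoke does not by itself give this.) For comparison, the paper avoids the difficulty entirely by deleting a \emph{vertex} rather than an edge: it picks a maximal element $x$ of the cycle, splits $V$ into a part $W[x]$ containing $U(x)$ and a part $R$ containing $D(x)$, both with acyclic cover graphs, and then stacks a realiser of one part over a realiser of the other in three extensions, finishing with one additional ordering tailored to the four blocks $W[x]\setminus U[x]$, $D(x)$, $U[x]$, $R\setminus D(x)$. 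That decomposition ensures that the up- and down-sets of the deleted element land cleanly on opposite sides of the cut, which is precisely the control your edge-deletion decomposition lacks.
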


\begin{proof}
This is a proof provided by Brightwell (via personal communications). Let $G$ be the unicycle cover graph, and let $C$ be the cycle in $G$.
Let $x$ be a maximal element in $C$. Let $A$ be the set of neighours of $x$ in $G$ that are greater than $x$ in $P$, and let $u_1,\ldots,u_k$ be an enumeration of the vertices in $A$. Let $W_i$ be the component of $G-x$ containing $u_i$ for $1\le i\le k$. Let $W[x]=\{x\}\cup (\cup_{i=1}^k W_i)$, and let $R=V(G)\setminus W[x]$. It is readily seen that both the cover graphs of the subposets of $P[W[x]]$ and $P[R]$ are acyclic, and hence each of them has dimension at most three. Moreover, $U(x)\subseteq W[x]$ and $D(x)\subseteq R$. Let $\{\sigma_1,\sigma_2,\sigma_3\}$ be a realiser for  $P[W[x]]$ and $\{\tau_1,\tau_2,\tau_3\}$ be a realiser for  $P[R]$. Let $\hat L$ be an arbitrary total ordering on the elements in $P$. Construct four linear orderigs on the set of elements in $P$ as follows.
\begin{align*}
&\sigma_i |_{R} <_{\pi_i} \tau_i |_{W[x]} \quad \text{for $1\le i\le 3$}, \\
&\hat L |_{W[x]\setminus {U[x]}} <_{\pi_4} \hat L|_{D(x)} <_{\pi_4} \hat L|_{U[x]} <_{\pi_4} \hat L |_{R\setminus D(x)}. 
\end{align*}
Note that the notation in the first inequality above means that first ordering elements in $R$ according to $\sigma_i$, followed by elements in $W[x]$ ordered according to $\tau_i$.
It is straightforward to verify that $\{\pi_i:\ 1\le i\le 4\}$ is a realiser for $P$.
\end{proof}

For a general poset, the following upper bound on its dimension given by F{\"u}rhedi and Kahn~\cite{furedi1986dimensions} is the best known, and has been proved to be close to being optimal~\cite{erdos1991dimension}.

\begin{thm}[F{\"u}rhedi-Kahn~\cite{furedi1986dimensions}]\label{thm:Kahn}
    Suppose that $P$ is a poset in which every element is comparable to at most $k$ other elements. Then, $\dim P=O(k\log^2 k)$. 
\end{thm}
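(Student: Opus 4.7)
The plan is to build a realizer of size $d = O(k\log^2 k)$ by a probabilistic coloring argument based on the Lov\'asz Local Lemma (LLL). I would first pass to the standard reformulation in terms of critical pairs. Call an incomparable pair $(x,y)$ a \emph{critical pair} if every element of $D(x)$ lies in $D[y]$ and every element of $U(y)$ lies in $U[x]$. A collection $\{L_1,\ldots,L_d\}$ of linear extensions is a realizer of $P$ iff for every critical pair $(x,y)$ some $L_i$ places $y$ below $x$. Moreover, a set $S$ of critical pairs can be simultaneously reversed by a single linear extension iff $S$ contains no \emph{alternating cycle}: a sequence $(a_1,b_1),\ldots,(a_m,b_m)\in S$ (indices mod $m$) with $b_j \le_P a_{j+1}$ for every $j$. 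So it suffices to partition the critical pairs into $d$ alternating-cycle-free classes, each of which extends to one linear extension in the realizer.

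To produce such a partition I would color the critical pairs independently and uniformly at random with $d$ colors, and for each potential alternating cycle define a bad event: all its pairs receive the same color. A cycle of length $m$ is monochromatic with probability $d^{1-m}$. The hypothesis that each element lies in at most $k$ comparabilities controls the dependency structure: the number of alternating cycles of length $m$ through a fixed critical pair is bounded by $k^{O(m)}$, because each transition $b_j \le_P a_{j+1}$ must traverse a comparability edge incident to an already chosen element, and the next critical pair shares an endpoint with that edge. Verifying the (asymmetric) LLL inequality
\[
\sum_{m \ge 2} (\text{dependency of length-}m\text{ events}) \cdot e \cdot d^{1-m} \;<\; 1
\]
for $d = Ck\log^2 k$ with $C$ sufficiently large then yields a good coloring. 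Each color class is alternating-cycle-free, hence reversible, and the resulting $d$ linear extensions form a realizer.

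The main obstacle is the tight accounting of short alternating cycles, which dominate the dependency sum. A naive count of length-$2$ cycles gives up to $k^2$ partner pairs per critical pair and would only yield $d = O(k^2)$. The improvement to $O(k\log^2 k)$ requires a refined bound exploiting the rigidity of critical pairs: each element $v$ participates in only $O(k)$ critical pairs, because once $v$ is fixed the partner coordinate is largely determined by the comparability neighbourhood of $v$. Combined with the logarithmic slack in LLL (which converts a linear dependency into a coloring size $O(k \log^2 k)$) and the geometric decay $d^{1-m}$ for longer cycles, this yields convergence of the dependency sum. This combinatorial bookkeeping—turning the degree hypothesis into the right per-pair dependency bound across all cycle lengths—is the technical core of the argument.
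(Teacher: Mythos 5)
The paper does not prove this theorem---it is imported from F\"uredi and Kahn~\cite{furedi1986dimensions} as a black box. What the paper \emph{does} take from that source is Lemma~\ref{lem:Kahn}, which is the first step of the F\"uredi--Kahn argument: it replaces ``realiser'' by the weaker demand that for every $x\nleq_P y$ some ordering in the family places $y$ below all of $U[x]$. F\"uredi and Kahn then build such a family of size $O(k\log^2 k)$ by a random construction combined with the Local Lemma, exploiting only the local fact $|U[x]|\le k+1$; they do not pass through critical pairs or alternating cycles at all.

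Your proposal takes a genuinely different route, and it has a gap at exactly the point you flag as ``the technical core.'' Both the claim that the number of alternating cycles of length $m$ through a fixed critical pair is $k^{O(m)}$ and the supporting assertion that ``each element $v$ participates in only $O(k)$ critical pairs'' are false under the degree-$k$ hypothesis. The critical-pair conditions $D(a)\subseteq D(b)$ and $U(b)\subseteq U(a)$ constrain $U(b)$ and $D(b)$ to lie inside sets of size at most $k$, but they do not bound the \emph{number} of elements $b$ with that property: if $a$ is minimal with $U(a)=\{c\}$, then every $b\parallel a$ with $U(b)\subseteq\{c\}$ is a critical partner of $a$, and there can be $\Theta(n)$ of them. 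This leaks into your cycle count for $m\ge 3$. Fixing $(a_1,b_1)$ and the transition $b_1\le_P a_2$ indeed bounds $a_2$ to the at most $k+1$ elements of $U[b_1]$, and likewise $b_m\in D[a_1]$, but the intermediate critical partners $b_2,\dots,b_{m-1}$ are unconstrained before the subsequent links are chosen, and those links cannot be chosen first because they depend on the $b_j$. Your count is correct for $m=2$, where all four vertices lie in $U[b_1]\cup D[a_1]\cup\{a_1,b_1\}$, but it breaks at $m=3$; consequently the dependency degree in your LLL inequality is not $k^{O(m)}$, it can grow with $n$, and the displayed sum need not be below one for any $d$ depending on $k$ alone. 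Even setting that aside, a per-pair dependency of order $k^2$ (which you concede is what the $m=2$ term already gives) would force $d=\Omega(k^2)$ in the LLL, and I do not see a mechanism by which the ``logarithmic slack'' you invoke reduces this to $k\log^2 k$; that is not a feature of the symmetric or asymmetric Local Lemma. Closing these holes would require an entirely different accounting, which is precisely what the F\"uredi--Kahn construction provides.
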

The following lemma was used in the proof in~\cite{furedi1986dimensions}, which is useful in this paper as well. Note that ${\R}$ in the lemma below is not necessarily a realiser. The idea is that some operation called the ``left-shift'' can be repeatedly applied to turn $\R$ into a realiser. We refer interested readers to~\cite{furedi1986dimensions} for more details.

\begin{lemma}[\cite{furedi1986dimensions}]\label{lem:Kahn}
    Let $P$ be a poset on $V$, and let $\R$ be a set of total orderings of $V$. Suppose that for every $x,y\in V$ such that $x\nleq_P y$ there exists $L\in \R$ such that $y\le_L U[x]$. Then $\dim P\le |\R|$.
\end{lemma}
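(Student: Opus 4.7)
My plan is to construct, for each $L\in\R$, a linear extension $L^*$ of $P$ by applying a ``left-shift'' operation---repeated swaps of locally $P$-violating adjacent pairs---and then to argue that $\{L^*:L\in\R\}$ is a realiser. Call an adjacent pair $(a,b)$ in a total order a $P$-violation if $a$ is immediately before $b$ but $b<_P a$; swap any such pair. Each swap strictly decreases the cardinality of $\{(a,b):a<_L b,\ b<_P a\}$, so iterating until no $P$-violation remains terminates and produces a total order $L^*$ with no $P$-violation, i.e.\ a linear extension of $P$.

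The crux is the following claim, which I would prove by tracking positions through the swap process: if $x\nleq_P y$ and $y\le_L U[x]$, then $y<_{L^*}x$. Since $x\le_P x$, the hypothesis $x\nleq_P y$ forces $x\neq y$ and $y\notin U[x]$, so $y\le_L U[x]$ combined with $x\in U[x]$ gives $y<_L x$ strictly. Now two elements can only reverse their relative order through a direct adjacent swap of one another, so for $y$ ever to come to lie to the right of $x$ there would have to be an intermediate order in which $y$ is immediately before $x$ and the pair $(y,x)$ is swapped; but that swap is triggered only when $x<_P y$, which contradicts $x\nleq_P y$. Hence $y$ remains strictly to the left of $x$ throughout the process, and $y<_{L^*} x$ as required.

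Granted the claim, the realiser property follows easily. For any incomparable pair $x\parallel y$ we have both $x\nleq_P y$ and $y\nleq_P x$, so the hypothesis supplies $L_1,L_2\in\R$ with $y\le_{L_1}U[x]$ and $x\le_{L_2}U[y]$; the claim then yields $y<_{L_1^*}x$ and $x<_{L_2^*}y$. For comparable pairs the realiser condition is automatic since each $L^*$ is a linear extension of $P$. Therefore $\{L^*:L\in\R\}$ is a realiser of $P$, and $\dim P\le|\R|$. The only nontrivial step is the claim in the middle paragraph, and I expect that to be the main obstacle; but the hypothesis $x\nleq_P y$ is tailored precisely to forbid the sole swap that could cause $y$ and $x$ to exchange their relative positions.
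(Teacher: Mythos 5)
The paper does not actually prove this lemma; it cites Füredi--Kahn and only mentions that a ``left-shift'' operation is used, so your attempt is being compared against that reference rather than a proof in the paper. Your high-level plan --- convert each $L$ into a linear extension $L^*$ by repeated shifts and show the reversal property survives --- is the right one, but the specific operation you chose, adjacent bubble-sort swaps, does not work: a total order can have no \emph{adjacent} $P$-violating pair and still fail to be a linear extension. Concretely, take $P$ on $\{a,b,c\}$ with the single relation $b<_P a$ and $c$ incomparable to both, and let $L=(a,c,b)$. Neither adjacent pair $(a,c)$ nor $(c,b)$ is a $P$-violation, so your process terminates immediately with $L^*=L$; yet $a<_{L^*}b$ while $b<_P a$, so $L^*$ is not a linear extension. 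The sentence ``a total order $L^*$ with no $P$-violation, i.e.\ a linear extension of $P$'' silently equates ``no adjacent violation'' with ``no violation at all,'' and that is where the proof breaks. A telltale sign is that your argument for the claim only ever uses $y<_L x$, never the full hypothesis $y\le_L U[x]$; the stronger hypothesis is precisely what a correct construction must exploit.

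One way to repair the argument is to build $L^*$ greedily from the top: repeatedly select, among the elements not yet placed, an element that is $P$-maximal in the remaining poset and is $L$-largest among those, and make it the next-largest element of $L^*$ (so the first selected element is the $L^*$-maximum, and so on). Then $L^*$ is a linear extension by construction. To see $y<_{L^*}x$ whenever $x\nleq_P y$ and $y\le_L U[x]$, suppose instead $y$ is selected strictly before $x$. At the step $y$ is selected, $x$ is still unplaced, so some $z\in U[x]$ is $P$-maximal among the remaining elements; since $y$ was chosen as $L$-largest among remaining maximal elements, $y\ge_L z$, while $y\le_L U[x]$ gives $y\le_L z$, forcing $y=z\in U[x]$, contradicting $x\nleq_P y$. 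This is exactly where the hypothesis $y\le_L U[x]$ (not merely $y<_L x$) is indispensable. Alternatively, the genuine Füredi--Kahn left-shift moves an element past an entire block in a single step rather than one adjacent position at a time; with that operation one can show both that the process reaches a linear extension and that the condition ``$y$ lies $L$-below all of $U[x]$'' is preserved by each shift.
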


\subsection{Main results}
\label{sec:results}

There are two well known upper bounds for the dimension of a poset $P$. The first is the width of $P$, which is the size of a largest anti-chain. The second upper bound is given by
F{\"u}rhedi and Kahn~\cite{furedi1986dimensions} in terms of the maximum number of elements comparable to a given element in the set (Theorem~\ref{thm:Kahn}).  As mentioned earlier, this bound is almost optimal in view of Theorem~\ref{thm:Erdos}, since it matches, up to a logarithmic factor, the lower bound on the dimension of $P_{\B(n,n,p)}$, when $p\ge \log^{1+\eps} n/n$ (and $p$ is not too close to 1). Bollob\'{a}s and Brightwell proved a similar phenomenon in  $P_{\G(n,p)}$ for $p=\Omega(\log n/n)$:  the dimension is close to the minimum of the width, and the F{\"u}rhedi-Kahn bound (Theorem~\ref{thm:BB}).  This is no longer the case for sparser random graph orders. Consider $P_{\G(n,p=c/n)}$ for an example. The minimum of the above two bounds would be around $\log n\log^2 \log n$. However, although the maximum number of comparable elements to a given element can be as large as $\log n$, most elements are only comparable to $O(e^c)$ other elements, and the number of ``problematic'' elements who are comparable to more than $ce^{Kc}$ other elements are bounded by $e^{-Kc} n$ (see Corollary~\ref{cor:tools}). It is then reasonable to have a careful study of the sub-posets induced by these  ``problematic'' elements, and study how they impact the dimension of the original poset. We prove the following theorems which upper bounds the dimension of a poset by the sum of certain sub-posets. Although these bounds are specifically tailored to help to bound the dimension of sparse random graph orders, we believe that they are of independent interest, as we are not aware of bounds of this nature in the literature.

Our first result is specifically for bipartite poset $P(A,B)$ where the elements in $B$ are greater than the elements in $A$.

\begin{thm}\label{thm:deterministicBipartitie}
Let $P=P(A,B)$ be a bipartite poset where $U[A]\setminus A\subseteq B$.  Let $S_A\subseteq A$ and $S_B\subseteq B$. Then,
\[
\dim P\le \dim P[U[S_A]]+\dim P[A\cup B\setminus S_A];
\]
\[
\dim P\le \dim P[D[S_B]]+\dim P[A\cup B\setminus S_B].
\]
\end{thm}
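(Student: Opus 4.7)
The plan is to construct an explicit realiser of $P$ of size $\dim P[U[S_A]] + \dim P[A\cup B\setminus S_A]$, yielding the first inequality; the second then follows by duality, applying the first to the dual poset $P'$ with $S_B$ playing the role of $S_A$ (noting $U_{P'}[S_B]=D_P[S_B]$ and $\dim P=\dim P'$). Set $T=U[S_A]$ and $R=(A\cup B)\setminus S_A$. Bipartiteness combined with $U[A]\setminus A\subseteq B$ gives $T=S_A\cup U(S_A)$ with $U(S_A)\subseteq B$, so $V=T\cup R$ and $V\setminus T=R\setminus T=(A\setminus S_A)\cup(B\setminus U(S_A))$.

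The key structural observation is that no element of $T$ is $\le_P$ any element of $V\setminus T$: in a bipartite poset the only nontrivial $\le_P$-relations go from $A$ to $B$, so a potentially violating $z\in T$ with $z\le_P y\in B\setminus U(S_A)$ would have to lie in $S_A$, placing $y\in U(S_A)$ --- a contradiction; the case $y\in A\setminus S_A$ is ruled out by bipartiteness directly. Symmetrically, no element of $R$ is $\ge_P$ any element of $S_A$. Now choose minimum realisers $\{\sigma_1,\dots,\sigma_{d_1}\}$ of $P[T]$ and $\{\tau_1,\dots,\tau_{d_2}\}$ of $P[R]$ with $d_1=\dim P[T]$ and $d_2=\dim P[R]$, and define total orderings on $V$ as follows: set $\pi_i$ to list $V\setminus T$ in some linear extension of $P|_{V\setminus T}$ followed by $T$ in the order $\sigma_i$, and set $\rho_j$ to list $S_A$ in any order followed by $R$ in the order $\tau_j$. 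The structural observation guarantees that each $\pi_i$ and $\rho_j$ is a linear extension of $P$ on $V$.

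To verify that $\mathcal{R}=\{\pi_1,\dots,\pi_{d_1},\rho_1,\dots,\rho_{d_2}\}$ is a realiser of $P$, examine the incomparable pairs: pairs with both endpoints in $T$ are reversed by some $\sigma_i$, and pairs with both endpoints in $R$ by some $\tau_j$. The remaining ``crossing'' pairs have one endpoint $u\in S_A=T\setminus R$ and the other $v\in V\setminus T=R\setminus T$; such a pair is necessarily incomparable (otherwise either $u<_P v$ forces $v\in U(S_A)\subseteq T$, contradicting $v\in V\setminus T$, or $v<_P u$ forces $v\in A$ with $u\in B$, impossible since $u\in S_A\subseteq A$). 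Each $\pi_i$ places $v$ before $u$ via the prefix $V\setminus T$, while each $\rho_j$ places $u$ before $v$ via the prefix $S_A$, so both directions of every crossing pair are witnessed. This exhausts all incomparable pairs and yields a realiser of size $d_1+d_2$.

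The main subtle point lies in these ``crossing'' pairs, which neither sub-realiser sees on its own. The asymmetric device --- prepending the ``missing'' block $V\setminus T$ in one family and $S_A$ in the other --- is engineered precisely to force opposite reversals of every such pair, and the structural observation is what makes these prepends compatible with the partial order so that $\pi_i$ and $\rho_j$ remain linear extensions of $P$.
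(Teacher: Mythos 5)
Your proof is correct. It follows the same high-level strategy as the paper --- concatenating a block of ``missing'' elements in front of a realiser of one sub-poset --- but differs in an important technical respect: you construct an explicit realiser consisting of genuine linear extensions of $P$, whereas the paper builds total orders that need not be linear extensions and then invokes Lemma~\ref{lem:Kahn} (the F{\"u}redi--Kahn reversal criterion, which only requires that for every $x\nleq_P y$ some ordering places $y$ below all of $U[x]$). Your argument is therefore self-contained and slightly more elementary, avoiding the implicit left-shift machinery behind Lemma~\ref{lem:Kahn}, at the small cost of having to verify that the block structure is compatible with $<_P$ so that each $\pi_i$ and $\rho_j$ is an honest linear extension.

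One wording slip worth correcting: you write ``no element of $R$ is $\ge_P$ any element of $S_A$.'' As stated this is false --- an element $u\in S_A$ can well satisfy $u<_P v$ for some $v\in U(S_A)\subseteq R$. What the linear-extension check for $\rho_j$ actually requires, and what is trivially true, is that no element of $R$ is $\le_P$ any element of $S_A$: indeed $R\cap A=A\setminus S_A$ is incomparable to $S_A$ since $A$ is an antichain, and elements of $R\cap B=B$ are never below anything in a bipartite poset. With the direction corrected, the verification that each $\rho_j$ is a linear extension is sound, and the remainder of the argument --- the incomparability of crossing pairs and the witnessing of both reversal directions via the two block prefixes --- is handled carefully and correctly.
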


For a general poset we have the following upper bound for its dimension.

\begin{thm}\label{thm:deterministicNonBipartitie}
Let $P$ be a poset on $V$ and suppose that $S\subseteq V$. Then,
\begin{align*}
&\dim P  \le 2(\dim P[D[S]\cup U[S]]+\dim P[V\setminus S] ); \\
&\dim P[D[S]\cup U[S]] \le \dim P[D[S]]+\dim P[U[S]]+\dim P[D\cup U],
\end{align*}
where $U=U[S]\setminus S$ and $D=D[S]\setminus S$.
\end{thm}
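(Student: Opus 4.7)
The plan is to extend realisers of the sub-posets to realisers of $P$ (respectively of $P[T]$) using an insertion rule that controls the relative placement of the added elements. For the first inequality, set $T=D[S]\cup U[S]$ and $W=V\setminus S$, and take minimum realisers $\R_T$ of $P[T]$ and $\R_W$ of $P[W]$. For each $\sigma\in\R_T$, I construct two linear orders $\sigma^+$ and $\sigma^-$ on $V$ by inserting the elements of $V\setminus T$, in the order of some fixed linear extension of $P[V\setminus T]$, into $\sigma$ at the latest (respectively earliest) position still compatible with $<_P$. Analogously, for each $\tau\in\R_W$, I construct $\tau^+$ and $\tau^-$ by inserting the elements of $S$ into $\tau$ at the latest/earliest compatible positions. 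At each insertion the set of valid positions forms a nonempty interval, because $\sigma$ (respectively $\tau$) is already a linear extension of its induced sub-poset, so the forced predecessors and successors of the newly inserted element occupy a prefix and a suffix of the current ordering. This produces $2|\R_T|+2|\R_W|$ linear extensions of $P$.

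To verify these form a realiser, I classify each incomparable pair $\{x,y\}$ according to where its endpoints sit among the blocks $S$, $D\cup U$, and $V\setminus T$. A pair entirely in $T=S\cup(D\cup U)$ is reversed by $\sigma^+$ and $(\sigma')^+$ for suitable $\sigma,\sigma'\in\R_T$, since $\sigma^\pm$ restricts to $\sigma$ on $T$; a pair entirely in $W=(D\cup U)\cup(V\setminus T)$ is reversed by $\tau^+,(\tau')^+$. The only remaining case is $x\in S$, $y\in V\setminus T$, and such a pair is automatically incomparable because $y$ is not comparable to any element of $S$ by the definition of $V\setminus T$; then the latest/earliest insertion rule yields $x<_{\sigma^-}y$ and $y<_{\sigma^+}x$, supplying both directions. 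This establishes the first inequality.

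For the second inequality, write $D=D[S]\setminus S$ and $U=U[S]\setminus S$, so that $T=S\sqcup(D\cup U)$. I take minimum realisers of $P[D[S]]$, $P[U[S]]$, and $P[D\cup U]$ and extend each one to an arbitrary linear extension of $P[T]$; the union has size $\dim P[D[S]]+\dim P[U[S]]+\dim P[D\cup U]$. Every distinct pair in $T$ already lies inside at least one of the three sub-posets: a pair disjoint from $S$ lies in $D\cup U$, and a pair meeting $S$ lies in $D[S]$ or in $U[S]$ according as the non-$S$ endpoint is in $D$ or in $U$. Hence every incomparable pair in $T$ is reversed inside one of the three original realisers, and the reversal is preserved by any extension to $P[T]$.

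The step requiring the most care is verifying that the latest/earliest insertion rule in the first inequality genuinely yields a linear extension of $P$: one processes the inserted elements in the order of a fixed linear extension of the relevant induced sub-poset and argues inductively that the compatible-position intervals never become empty. Once that technical point is settled, the case analyses above are routine.
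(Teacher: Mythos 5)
Your first-inequality argument departs from the paper's in a way that introduces a genuine gap. The paper does \emph{not} construct a realiser of $P$ directly: the orders $\pi_j,\pi'_j,\pi''_i,\pi'''_i$ it builds are generally not linear extensions of $P$ (for instance $\pi_j$ places all of $W=V\setminus(U\cup S\cup D)$ before all of $U\cup S\cup D$, which can contradict $P$-relations between $W$ and $D$). The proof is rescued by Lemma~\ref{lem:Kahn}: a family of total orders with the weaker ``reversal'' property $y\le_L U[x]$ for each non-relation already bounds $\dim P$, because the F\"uredi--Kahn left-shift turns it into an honest realiser of the same size. Your insertion rule insists on producing genuine linear extensions, a strictly stronger requirement, and that is exactly where the argument breaks.

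The failure is in the mixed case $x\in S$, $y\in V\setminus T$. When $y$ is inserted into $\sigma$ its slot is confined between $\max_\sigma\bigl(D_P(y)\cap D\bigr)$ and $\min_\sigma\bigl(U_P(y)\cap U\bigr)$; when $x$ is inserted into $\tau$, similarly between $\max_\tau\bigl(D_P(x)\cap D\bigr)$ and $\min_\tau\bigl(U_P(x)\cap U\bigr)$. Nothing forces these windows to straddle the other endpoint, and you have no control over where the chosen realisers place $D\cup U$ relative to $x$ and $y$. A concrete counterexample: let $V=\{x,s_1,s_2,d_1,d_2,u_1,u_2,y\}$ with only the comparabilities $d_1<_Px$, $d_2<_Px$, $s_1<_Pu_1$, $s_2<_Pu_2$, $y<_Pu_1$, $y<_Pu_2$, and set $S=\{x,s_1,s_2\}$, so $D=\{d_1,d_2\}$, $U=\{u_1,u_2\}$, $T=V\setminus\{y\}$, and $x\parallel y$. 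Take
\[
\R_T=\{\,s_2\,u_2\,d_1\,d_2\,x\,s_1\,u_1,\ \ s_1\,u_1\,d_2\,d_1\,x\,s_2\,u_2\,\},\qquad
\R_{V\setminus S}=\{\,d_2\,y\,u_1\,u_2\,d_1,\ \ d_1\,y\,u_2\,u_1\,d_2\,\},
\]
both of which one checks are realisers of the respective sub-posets. In each $\sigma\in\R_T$, one of $u_1,u_2$ precedes $x$, so the latest slot for $y$ (which must precede both $u_1,u_2$) is still before $x$; and $D_P(y)\cap T=\emptyset$, so $\sigma^-$ also puts $y$ first. In each $\tau\in\R_{V\setminus S}$, one of $d_1,d_2$ comes after $y$, so $x$ (which must follow both $d_1,d_2$ and has no upper constraints) lands after $y$ in both $\tau^+$ and $\tau^-$. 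All eight linear extensions you construct place $y<x$, so the incomparable pair $\{x,y\}$ is never reversed and the family is not a realiser of $P$. (There is also a sign slip: latest insertion of $y$ yields $x<_{\sigma^+}y$, not $y<_{\sigma^+}x$, but fixing the signs does not repair the argument.)

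Your argument for the second inequality, by contrast, is correct and even cleaner than the paper's: any linear extension of an induced sub-poset extends to a linear extension of the ambient poset, so extending realisers of $P[D[S]]$, $P[U[S]]$, $P[D\cup U]$ to linear extensions of $P[T]$ preserves reversals, and every incomparable pair of $P[T]$ lies inside at least one of the three induced sub-posets (a pair meeting $S$ sits in $D[S]$ or $U[S]$ according to whether its other endpoint is in $D$ or $U$; a pair disjoint from $S$ sits in $D\cup U$). This avoids Lemma~\ref{lem:Kahn} entirely for this part, whereas the paper still invokes it.
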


Note that $\dim P(A,B)\ge \max\{\dim P[U[S_A]],\dim P[A\cup B\setminus S_A]\}$ in the bipartite case, and $\dim P\ge \max\{\dim P[D[S]\cup U[S]],\dim P[V\setminus S] \}$ in the general case, the upper bounds in the above two theorems are tight up to a constant factor.

Now we turn to random graph orders. We start from addressing a question of Erd\H{o}s, Kierstead and Trotter regarding the dimension of $P_{\B(n,n,p)}$ when $p$ is very close to 1.

\begin{thm}\label{thm:bipartiteLargep} Let $q:=1-p$. 
\begin{enumerate}[(a)]
\item If $q=O(1/n)$ and $q=\omega(1/n^2)$ then a.a.s.\ $\dim P_{\B(n,n,p)}=\Theta(n^2q)$, and $n-\dim P_{\B(n,n,p)}=\Omega(n)$.
\item If $q=\omega(1/n)$ and $q\le n^{-1+1/7}$ then a.a.s. $\dim P_{\B(n,n,p)}=(1-o(1))n$.
\end{enumerate}
\end{thm}

\noindent {\bf Remark.} The vanishing error term $o(1)$ in part (b) (cf.~the $O(1/\log n)$ error in Theorem~\ref{thm:Erdos}) can in fact be made explicit from our proof, although we did not pursue such a quantification. Our main message here is that there is a phase transition from $q=\omega(1/n)$ to $q=O(1/n)$, at which point the dimension of the bipartite partial order ceases to be asymptotic to $n$. We excluded the uninteresting case $q=O(1/n^2)$ from part (a) since in that regime $\bar B$ contains $O_p(1)$ isolated edges and immediately $\dim P_{\B(n,n,p)}=O_p(1)$.

Next we turn to the sparse regime. Using Theorem~\ref{thm:deterministicBipartitie}
we prove the following bounds on the dimension of $P_{\B(n,n,p)}$ where $p=c_n/n$. We assume that $c_n\ge 1$ and $c_n\le \log^2 n$, since the dimension of $P_{\B(n,n,p)}$ for $p<1/n$ is well understood by Theorem~\ref{thm:Trotter}, and the case $p=\Omega(\log^2 n/n)$ has already been studied by Erd\"{o}s, Kierstad and Trotter~\cite{erdos1991dimension}.

Given $c>0$, let $x=\alpha_c$ be the unique positive solution to 
\begin{equation}
\frac{e^2}{x^2}e^{-cx}=1. \label{def:alpha_c}
\end{equation}

\begin{thm}\label{thm:mainbipartite}
    Let $p=c_n/n$ where $c_n\ge 2$ and $c_n\le \log^2 n$. Then, 
    \begin{enumerate}[(a)]
    \item a.a.s.\ $\dim P_{\B(n,n,p)}\ge (1-o(1))\frac{1}{2\alpha_{c_n}}$;
    \item there exist absolute constants $\gamma_1$ and $\gamma_2$, both independent of $n$ and $c_n$ such that a.a.s.\ $\gamma_1 c_n/\log c_n\le \dim P_{\B(n,n,p)}\le \gamma_2 c_n\log^2 c_n$.
    \end{enumerate}
\end{thm}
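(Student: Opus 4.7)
The plan is to prove the upper bound in (b) first by iterated application of Theorem~\ref{thm:deterministicBipartitie}, and then derive the two lower bounds from structural properties of $\B(n,n,p)$. For the upper bound of (b), I would fix a large absolute constant $K$ and set $S_A = \{a \in A : \deg(a) > K c_n\}$ and $S_B = \{b \in B : \deg(b) > K c_n\}$. Standard Chernoff bounds on the binomial degrees in $\B(n,n,p)$ yield $|S_A|, |S_B| \le n (e/K)^{K c_n}$ a.a.s. Two successive applications of Theorem~\ref{thm:deterministicBipartitie}, first removing $S_A$ from $P$ and then removing $S_B$ inside the residual bipartite poset $P'$, give
\[
\dim P_{\B(n,n,p)} \le \dim P[U[S_A]] + \dim P'[D[S_B]] + \dim P[(A \setminus S_A) \cup (B \setminus S_B)].
\]
The last summand has maximum degree at most $Kc_n$, so Theorem~\ref{thm:Kahn} bounds it by $O(c_n \log^2 c_n)$. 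For the first two summands I would iterate Theorem~\ref{thm:deterministicBipartitie} inside $P[U[S_A]]$ and $P'[D[S_B]]$: a second-moment estimate shows that only an exponentially small (in $c_n$) fraction of vertices in $N(S_A)$ has more than $K c_n$ neighbours back in $S_A$, because $|S_A|$ is already tiny, so raising the threshold and peeling another atypical layer reduces the problem further. After a constant number of peels the remaining sets are sparse enough that a trivial bound on their dimensions fits inside the $O(c_n \log^2 c_n)$ budget.

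For the lower bound in (b), I would exhibit an Erd\H{o}s--Kierstead--Trotter-type obstruction inside $\B(n,n,p)$: a collection of $t = \Theta(c_n / \log c_n)$ vertices $b_1, \ldots, b_t \in B$, each of degree approximately $c_n$, whose neighbourhoods $N(b_i) \subseteq A$ are pairwise almost disjoint. A first-moment calculation shows that such a near-disjoint family exists a.a.s.\ throughout the range $c_n \le \log^2 n$. The induced subposet is a sparse bipartite poset with $B$-side degree $\approx c_n$, and the classical argument underlying the near-optimality of Theorem~\ref{thm:Kahn} (equivalently, the Erd\H{o}s--Kierstead--Trotter lower bound in Theorem~\ref{thm:Erdos}) then gives dimension $\Omega(c_n / \log c_n)$ for this subposet.

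For the sharper lower bound in (a), I would argue by contradiction against a putative realiser $\R = \{L_1, \ldots, L_k\}$ of size $k < (1 - o(1))/(2 \alpha_c)$. For each $L_j$, let $T_j \subseteq B$ be the first $\alpha n$ elements of $B$ in $L_j$ for some $\alpha$ slightly larger than $\alpha_c$. Using the Poisson approximation of $\deg(a)$, a first-moment computation produces many vertices $a \in A$ whose neighbourhoods in $B$ are ``trapped'' by the $T_j$'s; on the other hand, the realiser axioms impose a matching upper bound on the number of such $a$. The defining equation $(e/x)^2 e^{-cx} = 1$ for $\alpha_c$ is precisely the balance point of the two counts, and $k < 1/(2\alpha_c)$ tips the balance to produce a contradiction.

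The main obstacle I anticipate is the iterative control of $\dim P[U[S_A]]$ in the upper bound: a careless peeling would accumulate spurious logarithmic factors beyond the target $c_n \log^2 c_n$. Avoiding this requires showing that after each peel the residual subposet satisfies a stronger concentration of degrees than just ``bounded maximum,'' which is delicate because $S_A$ is biased toward vertices whose neighbourhoods overlap atypically. On the lower-bound side, the subtle point is calibrating $\alpha$ close to $\alpha_c$ and keeping the first-moment estimates sharp enough to pin down the $(1 - o(1))$ factor.
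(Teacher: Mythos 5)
Your decomposition $S_A,S_B$ with threshold $Kc_n$ and the double application of Theorem~\ref{thm:deterministicBipartitie} plus Theorem~\ref{thm:Kahn} for $P[(A\setminus S_A)\cup(B\setminus S_B)]$ matches the paper. But the way you propose to handle $\dim P[U[S_A]]$ (and its dual) has a genuine gap, and it is exactly where the paper's main idea enters. Iterated peeling on degrees into $S_A$ does not converge in the way you hope: after you remove the vertices of $N(S_A)$ with high degree back into $S_A$, the residual bipartite poset still has $A$-side vertices of degree $\ge Kc_n$ (that is what it means to be in $S_A$), and indeed up to $\Theta(\log^2 n)$, so Theorem~\ref{thm:Kahn} applied to the residual does not give $O(c_n\log^2 c_n)$. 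Worse, the $S_A$-side never shrinks under your peeling, so the iteration does not obviously terminate in $O(1)$ rounds with a trivial leftover. The paper avoids all of this with a single structural lemma: because $|S_A| = n e^{-\Omega(Kc_n)}$ is so small, a.a.s.\ the bipartite graph $\B(n,n,p)$ induced on $U[S_A]$ contains no connected bicyclic subgraph, so every component of the cover graph of $P[U[S_A]]$ is a tree or unicyclic and hence $\dim P[U[S_A]]\le 4$ by Theorems~\ref{thm:Trotter} and~\ref{thm:Brightwell}. This global structural fact --- not a local degree-concentration argument --- is the missing ingredient.

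For the lower bound in (b), the paper does not run a separate Erd\H{o}s--Kierstead--Trotter-style argument; it simply uses $\alpha_c\sim 2\log c/c$ (Lemma~\ref{lem:asymptotic}) to read off $\Omega(c_n/\log c_n)$ from part~(a) for large $c_n$, and falls back on the trivial bound $\dim\ge 2$ for small $c_n$. Your proposed route through a near-disjoint family of $\Theta(c_n/\log c_n)$ high-degree $B$-vertices is problematic for two reasons: the induced subposet on such a family is essentially a disjoint union of stars, whose dimension is bounded by a constant, so this alone does not produce large dimension; and the paper explicitly warns that the Erd\H{o}s--Kierstead--Trotter lower-bound argument ``fails for the range of $c_n$ considered'' (it needs $p\ge \log^{1+\eps}n/n$). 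Finally, for (a) you are in the right neighbourhood, but the paper's argument is cleaner and fully deterministic given two a.a.s.\ events: with $(1-o(1))n^2$ incomparable pairs and the expansion property that any $\alpha n \times \alpha n$ pair of sets spans an edge, one shows that each linear extension reverses at most $2\alpha n^2$ incomparable pairs (by considering both the top $\alpha n$ of $A$ and the bottom $\alpha n$ of $B$ in that extension), giving $\dim\ge (1-o(1))/(2\alpha)$. Your sketch mentions only the $B$-side sets $T_j$; without the symmetric $A$-side sets the factor of $2$ (and hence the stated constant $1/(2\alpha_c)$) would not come out.
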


\noindent {\bf Remark}.  $\alpha_{c_n}\sim 2\log c_n/c_n$ as $c_n\to\infty$ (see Lemma~\ref{lem:asymptotic}), and thus $\dim P_{\B(n,n,p=c/n)}\ge (1-o_c(1)) c/4\log c$ as $c\to \infty$.
On the other hand, for small values of $c_n$, $\alpha_{c_n}$ can be numerically computed. The figure below on the left  is a plot of $1/2\alpha_c$ for $2<c<30$. From the plot the reader can see that the lower bounds obtained for small values of $c$ (below 20) are smaller than the trivial lower bound two. We stress that the merit of Theorem~\ref{thm:mainbipartite}, together with the upper and lower bounds for $\dim P_{\G(n,p)}$ in Theorems~\ref{thm:nonbipartiteLB} and~\ref{thm:mainnonbipartite}, is about the asymptotic behaviour of $\dim P_{\B(n,n,p)}$ when $c$ goes large. In fact, the implicit constant coefficients in the upper bounds (in  both Theorems~\ref{thm:mainbipartite} and~\ref{thm:mainnonbipartite}) are inherited from the implicit constant in the F\"{u}redi-Kahn bound, and thus these bound do not yield useful information when $c$ is small. Obtaining nontrivial lower or upper bounds on $P_{\B(n,n,p)}$ for small values of $c$ remains open. See Open Problem~\ref{open} below.

\noindent\begin{minipage}{0.4\textwidth}
    \includegraphics[width=\linewidth]{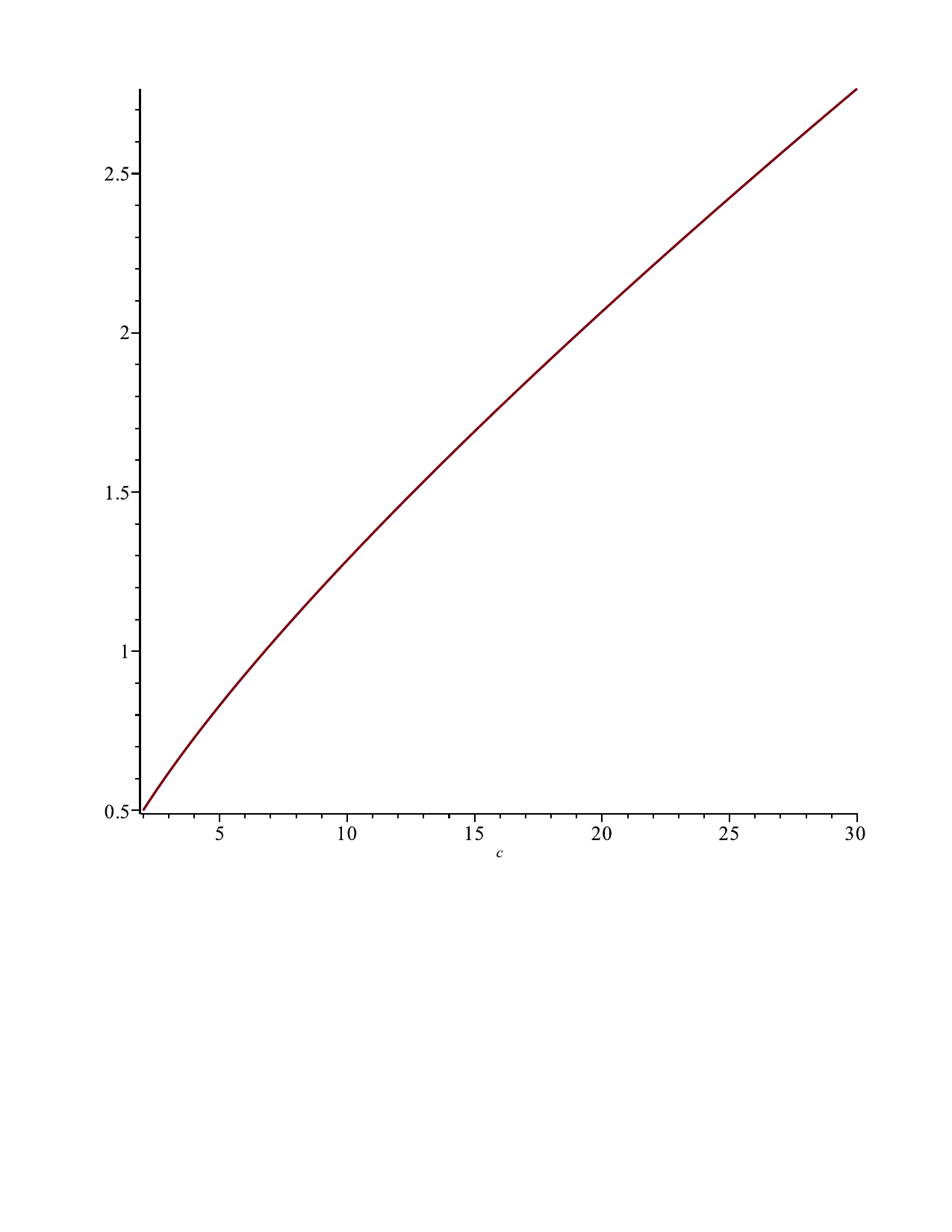}
\end{minipage}%
\hfill%
\begin{minipage}{0.4\textwidth} 
\includegraphics[width=\linewidth]{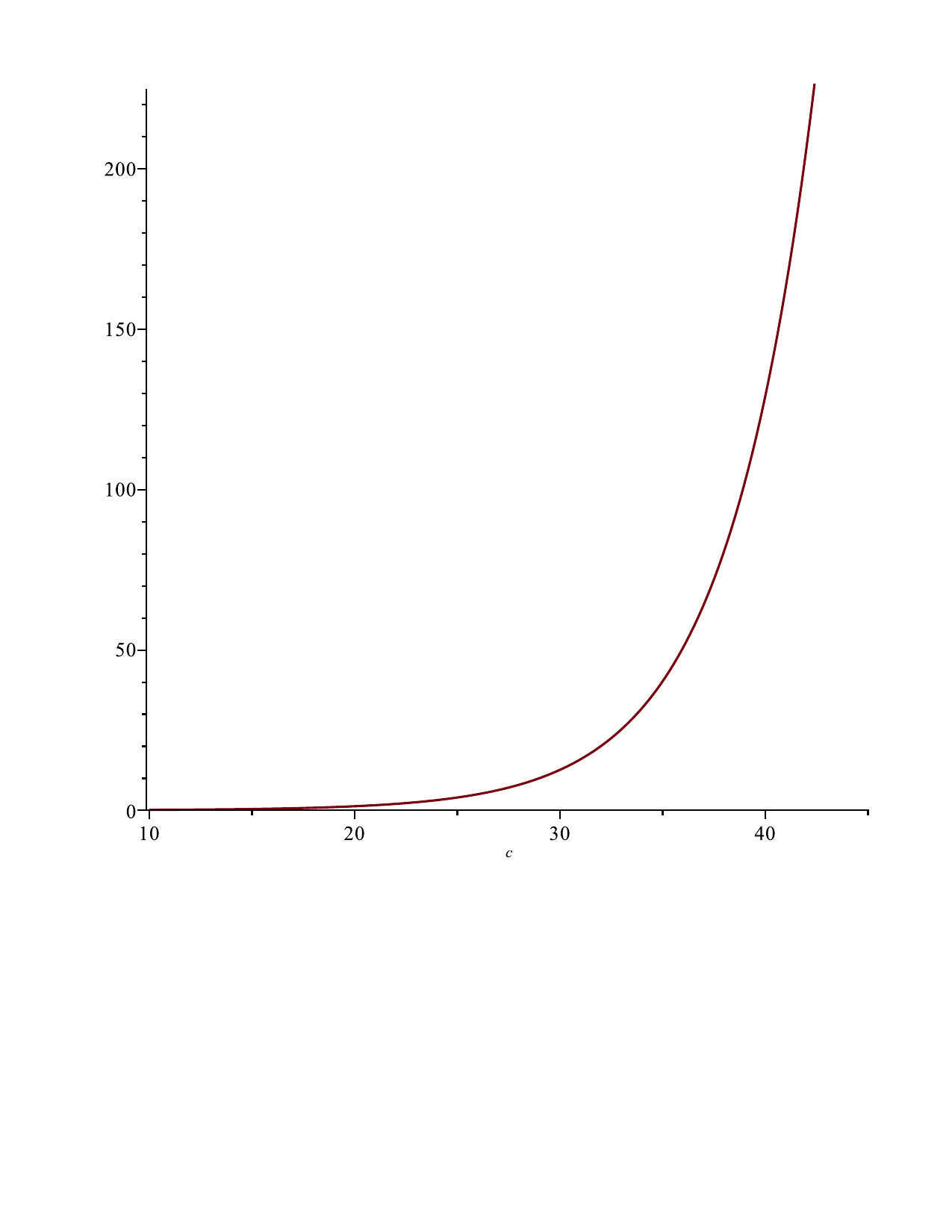}
\end{minipage}

\vspace{-2cm}

Next we consider the dimension of $P_{\G(n,p)}$.  Let $Li_2(z):=-\int_{0}^z y^{-1}\log(1-y) dy$ denote the dilogarithm function, also known as the Spencer's function. 
\begin{thm}\label{thm:nonbipartiteLB} Let $c>0$ be fixed and let $p=c/n$.
Suppose that $\xi, \beta$ are fixed positive real numbers such that $\xi+\alpha_c<1/2$ and $\sup_{t\le \alpha_c}f(c,\xi,t,\beta)<0$, where 
\begin{align}
f(c,\xi,\beta,t)=&\beta\log(e\xi/\beta)-c\beta (1/2-\xi-t)+\frac{1}{c}\left(Li_2(1)- Li_2(e^{-ct} )\right)\nonumber\\
&+\frac{1}{c}\left(Li_2(e^{-c\beta-ct})-Li_2(e^{-c\beta}) + Li_2(e^{-c(1/2-\xi-t)-ct})-Li_2(e^{-c(1/2-\xi-t)}) )\right).\label{def:f}
\end{align}
Then, 
\begin{enumerate}[(a)]
\item a.a.s.\  $\dim P_{\G(n,p)}\ge \xi/2\beta$;
\item there exist constants $\gamma:=\gamma(c)>0$ where $\lim_{c\to\infty} \gamma(c)=1/6$ such that a.a.s.\ $\dim P_{\G(n,p)}\ge e^{\gamma c}$.
\end{enumerate}
\end{thm}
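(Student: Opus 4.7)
The plan is to prove part~(a) by exhibiting a large family of incomparable pairs in $P_{\G(n,p)}$ that no single linear extension can reverse too many of, and then to deduce (b) via asymptotic analysis of the optimal parameters $\xi,\beta$. Fix $\xi,\beta>0$ with $\xi+\alpha_c<1/2$ and $t\le \alpha_c$ a free parameter. First I will construct a family $\mathcal{P}$ of $\xi n/2$ ordered incomparable pairs $(x,y)$ with $x<y$ in the natural ordering, as follows. Restrict attention to the two sub-bands $[n/4,n/2]$ and $[n/2,3n/4]$, and for each vertex $v$ in these bands expose the branching structure of its up-set $U[v]$ and down-set $D[v]$ by a BFS exploration of $\G(n,p)$. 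Since $p=c/n$ with $c>1$, a positive fraction of vertices have both $|U[v]|$ and $|D[v]|$ of linear order, by standard survival arguments for the Poisson branching process; these are the ``good'' endpoints, of which at least $\xi n$ exist in each sub-band a.a.s., so we can form at least $\xi n/2$ incomparable pairs $(x,y)$ with $x$ good in the lower sub-band, $y$ good in the upper sub-band, and $y\notin U[x]$.

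Next, for a linear extension $L$ of $P_{\G(n,p)}$, say $L$ \emph{reverses} a pair $(x,y)\in\mathcal{P}$ if $y<_L x$. The goal is to show that a.a.s.\ no $L$ reverses more than $\beta n$ pairs of $\mathcal{P}$. Simultaneously reversing many pairs forces strong constraints on $\G(n,p)$: ordering the reversed pairs by their positions in $L$ as $y_1<_L x_1<_L y_2<_L x_2<_L\cdots$, for every $i<j$ one needs $x_j\notin D[y_i]$, else $x_j\le_P y_i$ would force $x_j<_L y_i$, contradicting the reversal of pair $j$. These non-memberships translate into a bipartite forbidden-edge pattern between a set of size $\approx\beta n$ (the reversed-pair $x$'s) and a layer of size $\approx (1/2-\xi-t)n$, whose absence has probability at most $\exp(-c\beta(1/2-\xi-t)n)$. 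Enumerate certificates of the bad event by
\begin{enumerate}[(i)]
\item choosing the $\beta n$ reversed pairs from $\mathcal{P}$, giving $\binom{\xi n/2}{\beta n}\le\exp(\beta\log(e\xi/\beta)\cdot n)$ up to harmless constants;
\item exposing the up/down-sets of the reversed pairs to depth $t$, whose joint likelihood is obtained by summing $\log(1-e^{-cs})$ over $s\in (0,t]$, $s\in(\beta,\beta+t]$, and $s\in(1/2-\xi-t,1/2-\xi]$, and using $\int_a^b\log(1-e^{-cs})\,ds=\tfrac{1}{c}(Li_2(e^{-cb})-Li_2(e^{-ca}))$, which produces exactly the three dilogarithm groupings in~\eqref{def:f};
\item multiplying by the forbidden-edge probability above.
\end{enumerate}
Summing exponents yields a total probability of $\exp(n\cdot f(c,\xi,\beta,t))$; since $t$ is a feature of the random structure rather than a design parameter we maximise over $t\le\alpha_c$, and the hypothesis $\sup_{t\le\alpha_c} f<0$ forces this to be $e^{-\Omega(n)}$, so a.a.s.\ no extension reverses $\beta n$ or more pairs. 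As each pair in $\mathcal{P}$ must be reversed in some extension of any realiser and $|\mathcal{P}|=\xi n/2$, we conclude $\dim P_{\G(n,p)}\ge (\xi n/2)/(\beta n)=\xi/(2\beta)$, proving~(a).

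Part~(b) follows by asymptotic optimisation. Fix $\xi$ a small positive constant and let $\beta\to 0$ as $c\to\infty$. Using $\alpha_c\sim 2\log c/c$ together with the expansions $Li_2(1-u)=\pi^2/6+u\log u-u+O(u^2)$ and $Li_2(u)=u+O(u^2)$ for small $u>0$, the three dilogarithm pairs in~\eqref{def:f} simplify to a tractable expression, and the constraint $\sup_{t\le\alpha_c}f(c,\xi,\beta,t)<0$ reduces to a clean joint optimisation over $t$ and $\beta$; the minimal feasible $\beta$ is of order $e^{-c/6+o(c)}$, so $\dim P_{\G(n,p)}\ge \xi/(2\beta)\ge e^{c/6-o(c)}$, and $\gamma(c):=c^{-1}\log(\xi/2\beta)\to 1/6$.

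The hardest step is producing a union bound over a \emph{tractable} family of structural certificates rather than over the prohibitively large set of linear extensions of $P_{\G(n,p)}$. This requires a careful coupling of the BFS exploration of up- and down-sets in $\G(n,p)$ with independent Poisson branching processes layer by layer, a second-moment argument to guarantee $|\mathcal{P}|\ge \xi n/2$ a.a.s., and a delicate optimisation over the depth parameter $t$ to match the exponent precisely against $f(c,\xi,\beta,t)$. The fine asymptotics in part~(b) further demand identifying $1/6$ as the exact exponent emerging from the dilogarithm optimisation, which is the most computationally delicate piece.
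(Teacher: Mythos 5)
Your proposal correctly identifies the overall strategy (count incomparable pairs, bound reversals per linear extension, divide) and correctly reverse-engineers the three dilogarithm groupings in \eqref{def:f} as arising from the exposure of up/down-sets plus a forbidden-edge term and the entropy of choosing $\beta n$ objects. However, two of your central structural claims are incorrect and the argument as written does not close.

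The claim that ``a positive fraction of vertices $v$ in the sub-bands have both $|U[v]|$ and $|D[v]|$ of linear order, by standard survival arguments for the Poisson branching process'' is false. In $\G(n,p=c/n)$ with $c$ fixed, the up-set of a \emph{single} element is tight: the expected number of increasing paths from $v$ to $w$ is $p(1+p)^{w-v-1}\le p(1+p)^n = O(e^c/n)$, so $\mathbb{E}|U_P(v)| = O(e^c)$, and a.a.s.\ every element has $|U_P[v]|, |D_P[v]| = O(\log n)$. This is the cited Lemma~3.1(iii) of Bollob\'as--Brightwell, which the paper's Lemma~\ref{lem:incomparable}(a) uses in the \emph{opposite} direction: precisely because up/down-sets are small, almost all of $A'\times B'$ is incomparable. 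The supercritical branching process of the undirected giant component is irrelevant here; the increasing-path exploration that defines the up-set never produces linear-sized sets for a single starting vertex. What \emph{is} linearly large is the up-set of a \emph{collection} of $\beta n$ starting elements, and this is exactly what the paper's Lemma~\ref{lem:expansion} establishes, with $f$ bounding the log-probability that $|U[S']\cap A|$ fails to reach size $\alpha_c n$. Relatedly, your union bound in steps (i)--(iii) is circular: the family $\mathcal{P}$ is built by a BFS exploration of the same random graph whose probability you then bound, so you cannot first ``choose the $\beta n$ reversed pairs from $\mathcal{P}$'' and then multiply by probabilities. The paper fixes the candidate family \emph{deterministically} by partitioning $[n]$ in natural order as $A'\cup A\cup B\cup B'$ with $|A'|=|B'|=\xi n$, takes the incomparable pairs in the deterministic product $A'\times B'$ (which a.a.s.\ has $(1-o(1))\xi^2 n^2$ of them), and unions only over the $\binom{\xi n}{\beta n}$ deterministic subsets $S'\subseteq A'$; then for any $L$ it picks the top $\beta n$ of $A'$ and bottom $\beta n$ of $B'$ under $L$, expands both into $A$ and $B$, forces an edge between the expansions, and concludes the reversals live in $(\overline{A'}\times B')\cup(A'\times\underline{B'})$, of size $\le 2\beta\xi n^2$. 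Your interlacing picture $y_1<_L x_1<_L y_2<_L x_2<_L\cdots$ is not justified and can fail when reversed pairs nest rather than interlace in $L$. Your sketch of part (b) (fixed small $\xi$, $\beta = e^{-c/6+o(c)}$, $\gamma(c)\to 1/6$) is directionally consistent with the paper's choice $\xi=1/\log c$, $\beta=\exp(-c/6+c/\log c)$, but of course rests on part (a).
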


\noindent {\bf Remark} (a). To obtain Theorem~\ref{thm:nonbipartiteLB}(b) we applied part (a) with the choice $(\xi,t)=(\frac{1}{\log c}, \frac{3\log c}{c})$, which works well for large $c$. For small values of $c$, a lower bound  in Theorem~\ref{thm:nonbipartiteLB}(a) can be numerically computed by choosing some suitable $\xi$. We plotted the resulting lower bound on the figure above on the right, by choosing $\xi=1/15\log c$, where $c$ is between 10 and 50. However we did not try to optimize over the choice of $\xi$. 

(b). The assumption that $c=O(1)$ is imposed for technical reason only. In the proof we approximate a Riemann sum by the dilogarithm function. The lack of uniform bound on the derivative of $y^{-1}\log(1-y) $ prevents us from extending the proof for $c_n$ as a growing function of $n$.  

For the upper bound on $\dim P_{\G(n,p)}$ we have the following result.

\begin{thm}\label{thm:mainnonbipartite}
    Let $p=c_n/n$ where $c_n\ge 1$ and $c_n=o(\log n)$. Then, a.a.s.\ $\dim P_{\G(n,p)}=O(c_n^4 e^{c_n})$. 
\end{thm}

Notice that the coefficient in the exponent for the lower bound on $\dim P_{\G(n,p)}$ in Theorem~\ref{thm:nonbipartiteLB} does not match that in the upper bound in Theorem~\ref{thm:mainnonbipartite} for large $c$. We conjecture that our upper bound is essentially tight.

\begin{con}
Let $p=c_n/n$ where $1\le c_n\le \log n$. Then, there exist $\beta(c),\gamma(c)>0$ where $\lim_{c\to\infty}\beta(c)=\lim_{c\to\infty}\gamma(c)=1$ such that a.a.s.\ $e^{\beta(c_n) c_n}\le \dim P_{\G(n,p)}\le e^{\gamma(c_n) c_n}$.
\end{con}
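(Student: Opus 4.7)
The upper bound in the conjecture already follows from Theorem~\ref{thm:mainnonbipartite} with $\gamma(c)=1+(4\log c)/c\to 1$, since $c_n^4 e^{c_n}=e^{(1+o_c(1))c_n}$. The substantive task is therefore to improve the lower bound from $e^{c/6}$ (Theorem~\ref{thm:nonbipartiteLB}(b)) to $e^{(1-o_c(1))c_n}$.

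My plan is to exhibit, a.a.s., a standard example $S_k$ of size $k=e^{(1-o_c(1))c_n}$ inside $P_{\G(n,p)}$; recall that $\dim S_k=k$. Fix disjoint intervals $A^{*}\subseteq[1,n/3]$ and $B^{*}\subseteq[2n/3,n]$, each of length $m=\Theta(n)$, and restrict to the bipartite poset $P'$ on $A^{*}\cup B^{*}$ inherited from $P_{\G(n,p)}$. A Galton-Watson coupling for directed reachability in $\G(n,c/n)$ shows that for a typical $a\in A^{*}$ the up-set $U_P(a)\cap B^{*}$ has size concentrated around $\rho_c^{2}\, m$, where $\rho_c$ is the Poisson$(c)$ survival probability with $1-\rho_c\sim e^{-c}$ as $c\to\infty$. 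Consequently, the ``incomparability'' bipartite graph $H$ on $A^{*}\cup B^{*}$, placing an edge at $(a,b)$ when $a\parallel_P b$, has edge density of order $2e^{-c}$.

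The key observation is that a standard example $S_k$ sitting in $P'$ is precisely an induced matching of size $k$ in $H$. A first-moment heuristic gives expected count of induced $k$-matchings of order $\binom{m}{k}^{2}k!\,(2e^{-c})^{k}(1-2e^{-c})^{k(k-1)}$, which stays positive for all $k$ up to $e^{(1-o_c(1))c}$. To make this rigorous I would run a greedy construction: pick an incomparable pair $(a,b)\in A^{*}\times B^{*}$, delete the $O(e^c)$ vertices lying in the two BFS explorations from $a$ and $b$, and iterate; after $e^{(1-o_c(1))c}$ rounds the residual graph is still close to $\G(m',c/n)$ for some $m'=\Theta(n)$, so the construction continues. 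The selected pairs will form the desired $S_k$ provided that for every surviving pair $(a_i,b_j)$ with $i\neq j$, one can exhibit a monotone path from $a_i$ to $b_j$ avoiding the already-deleted contamination regions.

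The main obstacle, and the reason why Theorem~\ref{thm:nonbipartiteLB}(b) loses a factor of $1/6$ in the exponent, is dependence: the events $\{a_i<_P b_j\}_{i\neq j}$ are correlated through shared monotone paths, and the incomparability events can reinforce one another. Overcoming this requires two ingredients that should together suffice: (i) a sharpened two-point estimate for $\pr[a<_P b]$ refining the $e^{-c\rho_c}$ heuristic underlying Corollary~\ref{cor:tools}, showing that the correction term is $O(e^{-2c})$ rather than merely $o(1)$; and (ii) an exposure/sprinkling argument that reveals only $O(e^c)$ edges of $\G(n,p)$ per greedy step, so that each new selection is made against essentially fresh randomness. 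If these two ingredients can be combined, the induced matching of the required size appears a.a.s., closing the gap and resolving the conjecture.
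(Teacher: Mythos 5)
This statement is an open conjecture; the paper offers no proof, only the remark that Theorem~\ref{thm:mainnonbipartite} confirms the upper bound for $c_n=o(\log n)$. So there is no paper argument to compare against, and the question is whether your lower-bound strategy could work. Your reading of the upper bound is fine, modulo the $o(\log n)$ restriction and the implicit constant, which contribute only $o_c(1)$ to the exponent.

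The standard-example route has a structural obstruction that the first-moment heuristic hides. If $a\in A^{*}$, $b\in B^{*}$ and $a\parallel b$, then $U[a]$ and $D[b]$ are disjoint (any $z$ with $a<_P z<_P b$ would give $a<_P b$). Each of these two sets is either of bounded size (local branching process dies out; conditional size $O(1)$ with exponential tail of rate $\approx c$) or of linear size $\approx\rho_c n$ (giant). Since $\rho_c\to 1$, two disjoint giants of density $\approx\rho_c$ inside the common middle interval are impossible once $\rho_c>1/2$, i.e.\ for every $c$ beyond a small constant. Hence, conditioned on $a\parallel b$, with overwhelming conditional probability one of $U[a],D[b]$ is a small extinct cluster. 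But an induced matching $\{(a_i,b_i)\}_{i=1}^k$ realizing $S_k$ forces $\{b_j:j\neq i\}\subseteq U(a_i)$ and $\{a_j:j\neq i\}\subseteq D(b_i)$, so $\min(|U(a_i)|,|D(b_i)|)\geq k-1$ for every $i$: each matching pair needs its \emph{small} cluster to nonetheless reach size $\geq k-1$, a conditionally exponentially rare event of probability about $e^{-\Theta(ck)}$. Multiplying over the $k$ pairs and against the $\approx m^{2k}$ choices gives $\lesssim m^{2k}e^{-\Theta(ck^2)}$, forcing $k=O(\log n/c)$. Thus $k\approx e^{(1-\varepsilon)c}$ is attainable only for $c\lesssim\log\log n$ -- far short of the conjectured range $c\leq\log n$. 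Relatedly, your greedy step ``delete the $O(e^c)$ vertices in the two BFS explorations'' is off: an extinct cluster is a.s.\ of size $O(1)$, while a surviving one has size $\Theta(n)$, so there is no useful $O(e^c)$ regime to exploit. The events $\{a_i\parallel b_i\}$ and $\{a_i<_P b_j\}$ are strongly negatively associated through these shared cluster sizes, which is exactly what the independent first-moment calculation misses. A sharpened reversal-counting argument refining Theorem~\ref{thm:nonbipartiteLB} -- i.e.\ a better choice or analysis of $(\xi,\beta,t)$ in $f$, or a finer count of reversible incomparable pairs -- seems a more plausible route to the conjectured exponent than induced matchings in the incomparability graph.
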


\noindent {\bf Remark.} Our Theorem~\ref{thm:mainnonbipartite} confirms the upper bound in the conjecture for $c_n=o(\log n)$.  Bollob\'{a}s and Brightwell conjectured that Theorem~\ref{thm:BB}(c) holds if $\frac{4}{5}$ is replaced by an arbitrarily small constant $\eps>0$.

Our results show that the dimensions of $P_{\B(n,n,p)}$ and $P_{\G(n,p)}$ does not undergo a sudden change when $p$ passes $1/n$. However our results also do not exclude the possibility that there is a ``non-smooth'' change at $p=1/n$. We leave it as an open problem.

\begin{question}\label{open}
What is the largest positive integer $I$ that satisfies the following property: for all fixed $c>1$, a.a.s.\ $\dim P_{\G_n} \ge I$, where $\G_n\sim \G(n,p=c/n)$ or $\G_n\sim \B(n,n,p=c/n)$?
\end{question}



We use standard Landau notation throughout the paper, with asymptotics understood to be taken as $n\to\infty$ unless otherwise specified. When the asymptotic behaviour depends on a different variable --- such as $c$ --- we indicate this by including the variable in the subscript to emphasize the dependence. For instance, $o(1)$ denotes some real number which goes to zero as $n\to\infty$, whereas $o_c(1)$ denotes some real number that goes to zero as $c\to \infty$. 
We say an event holds asymptotically almost surely (a.a.s.) if its probability goes to 1 as $n\to\infty$.

\subsection{Connections to causal set theory}

Causal set theory is a theoretical framework that models spacetime as a fundamentally discrete, combinatorial structure. In contrast to general relativity, which treats spacetime as a smooth manifold, this approach---motivated by ideas from quantum gravity and quantum mechanics---posits that spacetime consists of indivisible ``atoms'' of events, organized into a locally finite poset; see~\cite{sorkin1991spacetime}. The partial order in this poset encodes the causal relationships between events.
A central challenge in causal set theory is to generate such posets in a way that is both physically meaningful and mathematically rigorous. One of the most influential approaches to this problem is the classical sequential growth (CSG) model. Among the models studied, the random graph model 
$P_{\G(n,p)}$	
 serves as a primary example of a CSG model within this framework; see~\cite{brightwell2016mathematics} for work connecting the studies of $P_{\G(n,p)}$ with developments in causal set theory.

A key concept in the study of causal sets is the causal representation of a poset in Minkowski space. Roughly speaking, the goal is to embed a poset 
$P$ into the $d$-dimensional Minkowski space such that the order relation corresponds to the lightcone containment---that is, $x<y$ in $P$ if and only if the event $x$
 lies in the causal past of $y$ in the embedding. The Minkowski dimension of $P$ is the smallest dimension $d$ for which such a representation exists. We refer interested readers to~\cite{meyer1993spherical} for more details.
While both the (order-theoretic) dimension and the Minkowski dimension measure the complexity of a poset, they do so in fundamentally different ways. Studying the Minkowski dimension of the random poset $P_{\G(n,p)}$ remains an intriguing and open problem, linking combinatorics, geometry, and the foundations of spacetime.


\section{Proof of Theorem~\ref{thm:deterministicBipartitie}.\ }

For a bipartite poset $P=P(A,B)$, it is convenient to denote  $U[S]\setminus S$ 
by $U(S)$ for $S\subseteq A$, and similarly denote $D[S]\setminus S$ by $D(S)$ for $S\subseteq B$. Note that $U(S)\subseteq B$ for $S\subseteq A$ and $D(S)\subseteq A$ for $S\subseteq B$.


Let $P_1=P[U[S_A]]$ and $P_2=P[A\cup B\setminus S_A]$, and let $d_i=\dim P_i$ for $i\in \{1,2\}$.
Let $\{ L_1^i, \ldots, L_{d_i}^i \}$ be a realiser for $P_i$. Let $\hat L$ be an arbitrary total ordering on $A\cup B$. Let $A_1=S_A$, $A_2=A\setminus S_A$, $B_1=U(A_1)$ and $B_2=U(A_2)$.

Let $\pi_j^i$ be a permutation on $A\cup B$ defined by

\begin{equation} \hat L|_{A \setminus A_i} <_{\pi_j^i}  \hat L|_{B \setminus B_i}   <_{\pi_j^i} L_j^i\quad \text{for all $i \in [2], j \in [d_i]$.}\label{eq:realiser1}
\end{equation}


\begin{claim}\label{claim:reverse}
   For all  $x, y \in P$ where $x \nleq_P y$, there exists $i \in [2]$ and $j \in [d_i]$ such that $y \leq_{\pi_j^i} U(x)$.
\end{claim}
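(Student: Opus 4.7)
The plan is a straightforward case analysis driven by the three-block structure of each $\pi_j^i$, together with one standard fact about realisers. Since $P$ is bipartite with $U[A]\setminus A\subseteq B$, every element of $U_P(x)$ lies in $B$; if $x\in B$ then $U_P(x)=\emptyset$ and the statement is vacuous, so I may assume $x\in A$, say $x\in A_i$ for a unique $i\in\{1,2\}$, in which case $U_P(x)\subseteq B_i$ by definition of $B_i=U(A_i)$. I will also use the elementary observation that if $\{L_1,\ldots,L_d\}$ is a realiser of a poset $Q$ and $x\not<_Q y$, then some $L_j$ reverses that pair; since $L_j$ is a linear extension of $Q$, one in fact gets $y\le_{L_j}U_Q[x]$.

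With $i$ fixed, recall $\pi_j^i$ partitions $A\cup B$ into three consecutive blocks: $A\setminus A_i$ in $\hat L$-order, then $B\setminus B_i$ in $\hat L$-order, and finally $A_i\cup B_i=U[A_i]$ in $L_j^i$-order. Case (a): if $y\in A\setminus A_i$ then $y$ lies in block one while $U_P(x)\subseteq B$ lies in blocks two and three, so $y<_{\pi_j^i}U_P(x)$ for every $j$. Case (b): if $y\in B\setminus B_i$ then $y$ lies in block two while $U_P(x)\subseteq B_i$ lies in block three, so again any $j$ works. Between them, these two easy cases dispose of every $y\notin A_i\cup B_i$.

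The remaining case is $y\in A_i\cup B_i$. Here both $x$ and $y$ belong to $V(P_i)$: this is automatic for $i=1$, while for $i=2$ one has $A_2\cup B_2\subseteq A_2\cup B=V(P_2)$. Since $x\not\le_P y$ we have $x\not<_{P_i}y$, so the observation above produces some $L_j^i$ with $y\le_{L_j^i}U_{P_i}[x]$. The bipartite hypothesis plus $x\in A_i$ forces $U_P(x)\subseteq B_i\subseteq V(P_i)$, so $U_P(x)=U_{P_i}(x)$; and since both $y$ and $U_P(x)$ lie inside the third block of $\pi_j^i$, which is $L_j^i$-ordered, we conclude $y\le_{\pi_j^i}U_P(x)$. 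The only delicate point---and what I would flag as the main obstacle---is the identification $U_P(x)=U_{P_i}(x)$ when $i=2$, since $P_2$ lives on the strictly larger ground set $A_2\cup B$; but once the bipartite hypothesis $U[A]\setminus A\subseteq B$ is used to place all of $U_P(x)$ inside $B_i$, the match is automatic and the proof closes.
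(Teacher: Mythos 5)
Your proof is correct and follows essentially the same approach as the paper's: a case analysis driven by which of the three blocks of $\pi_j^i$ contains $y$, with $i$ chosen so that $x\in A_i$ and hence $U_P(x)\subseteq B_i$. In fact your treatment is slightly more careful than the paper's --- by isolating the subcase $y\in B\setminus B_i$ (your Case (b)), you fill a small gap in the paper's Case 2, which asserts ``we may assume that $y\in A\setminus A_X$'' even though for $X=1$ the element $y$ could instead lie in $B\setminus B_1$ (since $V(P_1)=A_1\cup B_1$ does not contain all of $B$); the conclusion still holds there via the block structure, as your argument shows.
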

Now the first assertion of Theorem~\ref{thm:deterministicBipartitie} follows by Lemma~\ref{lem:Kahn} and Claim~\ref{claim:reverse}. For the second assertion, consider $P'$, the dual of $P$. 
Then, $U_{P'}(S_B)=D_{P}(S_B)\subseteq A$ and $U_{P'}(B\setminus S_B)=D_{P}(B\setminus S_B)\subseteq A$. Thus, the second assertion follows from the first assertion of Theorem~\ref{thm:deterministicBipartitie} by symmetry.

It only remains to prove Claim~\ref{claim:reverse}. \smallskip

\noindent {\em Proof of Claim~\ref{claim:reverse}.\ } 
    Let $x, y$ be elements in $ P$ with $x \nleq_P y$.  We consider two cases.
    
    {\em Case 1:} Suppose that there exists $X\in [2]$ such that $x,y \in P_X$. Then $x \nleq_P y$ implies  that there is some $j \in [d_X]$ for which $y \leq_{L_j^X} x$, which implies that $y\le_{L_j^X} U(x)$, since $L_j^X$ is a realiser of $P_X$ and moreover $P_X$ contains all elements in $U_P(x)$ by the definition of $P_1$ and $P_2$.
  Thus, $y \leq_{\pi_j^X} U(x)$, as desired. \smallskip
  
{\em Case 2:} 
        Suppose that $x \in P_X$ and $y \in P_Y$ where $X, Y \in [2]$,
  and $X \neq Y$. Then as before $P_X$ contains all elements in $U_P(x)$. Moreover, we may assume that $y \in A \setminus A_X$ as otherwise $y$ would be in $P_X$ and we would have considered it as in case 1. It follows now that $y \leq_{\pi_1^X} U(x)$.
 \qed
 
 \qed

\section{Proof of Theorem~\ref{thm:deterministicNonBipartitie}}
We prove the first inequality.
Let $U=U[S]\setminus S$, $D=D[S]\setminus S$ and $W=V\setminus (U\cup S\cup D)$. Then $D[S]\cup U[S]=U\cup S\cup D$. It follows that $W$ is exactly the set of elements $y$ such that $y\parallel x$ for every $x\in S$. Let $d_1=\dim P[U\cup S\cup D]$ and $d_2=\dim P[V\setminus S]$. Let $\{L^1_j\}_{1\le j\le d_1}$ be a realiser of $P[U\cup S\cup D]$, and $\{L^2_j\}_{1\le j\le d_2}$ be a realiser of $P[V\setminus S]$. Construct total orderings $\pi$, $\pi'$, $\pi''$ and $\pi'''$  on $V$ as follows:





\[
L^2_1 |_W <_{\pi_{j}} L^1_j, \quad j\in [d_1] 
\]

\[
 L^1_j <_{\pi'_{j}} L^2_1 |_W , \quad  j\in [d_1] 
\]

\[
L^1_1 |_S<_{\pi''_i}L_i^2\quad i\in[d_2]
\]

\[
L_i^2 <_{\pi'''_i}  L^1_1 |_S\quad i\in[d_2].
\]

Let ${\mathcal R}=\{\pi_{j},\  j\in [d_1]\}\cup \{\pi'_{j},\  j\in [d_1]\}\cup \{\pi''_{i},\ i\in[d_2]\}\cup \{\pi'''_{i},\ i\in[d_2]\}$. Note that $|{\mathcal R}|=2(d_1+d_2)$. Now Theorem~\ref{thm:deterministicNonBipartitie} follows by Theorem~\ref{lem:Kahn} and the following claim.

\begin{claim}\label{claim:realiser}
For every $x,y\in V$ such that $x\nleq_P y$ there exists $L\in {\mathcal R}$ such that $y\le_L U_P[x]$.
\end{claim}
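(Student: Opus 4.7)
The plan is to prove Claim~\ref{claim:realiser} by case analysis on which of the four blocks $S, U, D, W$ the elements $x$ and $y$ lie in (recall these partition $V$). Two preparatory observations drive the argument. First, the location of $x$ constrains $U_P[x]$: if $x \in S \cup U$ then $U_P[x] \subseteq S \cup U$, since any $z \ge_P x$ satisfies $z \ge_P s$ for some $s \in S$, hence $z \in U[S]$; if $x \in W$ then $U_P[x] \subseteq W \cup U$, because $z \ge_P x$ combined with $z \in S \cup D$ would force $x \le_P z \le_P s'$ for some $s' \in S$, contradicting $x \in W$; if $x \in D$, no such restriction holds. Second, a general fact about realisers: if $\{L_j\}$ is a realiser of a poset $Q$ and $a \not\le_Q b$, then some $L_j$ satisfies $b \le_{L_j} z$ for every $z \in U_Q[a]$. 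Indeed, pick $L_j$ in which $b <_{L_j} a$ (existing since $a \not\le_Q b$); then $a \le_{L_j} z$ for all $z \ge_Q a$ yields $b <_{L_j} z$.

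The easy cases are $x \in S \cup U$ and $x \in W$. When $x \in S \cup U$, the set $U_P[x]$ avoids $W$, so if $y \in W$ any $\pi_j$ works (with $y$ in the $W$-prefix and $U_P[x] \subseteq U \cup S \cup D$ in the $L^1_j$-suffix), while if $y \in U \cup S \cup D$ I apply the realiser fact inside $P[U \cup S \cup D]$ to obtain $j$ with $y \le_{L^1_j} U_P[x]$ and again use $\pi_j$. When $x \in W$, the set $U_P[x]$ lies in $V \setminus S$, so if $y \in S$ any $\pi''_i$ works (with $y$ in the $S$-prefix and $U_P[x]$ in the $L^2_i$-suffix), while if $y \in V \setminus S$ I apply the realiser fact inside $P[V \setminus S]$ to obtain $i$ with $y \le_{L^2_i} U_P[x]$ and use $\pi'''_i$.

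The main obstacle is the case $x \in D$, where $U_P[x]$ can meet all four blocks and neither realiser on its own covers $U_P[x]$. The resolution is to use the ``reversed'' block patterns $\pi'_j$ and $\pi'''_i$ to push the awkward piece of $U_P[x]$ to the end. If $y \in U \cup S \cup D$, the realiser fact in $P[U \cup S \cup D]$ (both $x, y$ lie there) yields $j$ with $y \le_{L^1_j} U_P[x] \cap (U \cup S \cup D)$; then $\pi'_j$ works, because the remaining piece $U_P[x] \cap W$ sits in the $W$-suffix, strictly after $y$. If $y \in W$, both $x, y$ lie in $V \setminus S$, and the realiser fact in $P[V \setminus S]$ yields $i$ with $y \le_{L^2_i} U_P[x] \cap (V \setminus S)$; then $\pi'''_i$ works, because the remaining piece $U_P[x] \cap S$ sits in the $S$-suffix, again after $y$.

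Throughout, the organizing principle is that every element of $U_P[x]$ which falls outside the subposet whose realiser we invoke must be forced into a block of $\pi$ that lies after $y$; the four block patterns $\pi, \pi', \pi'', \pi'''$ are precisely engineered to make at least one such choice available in every $(x,y)$-configuration, and Claim~\ref{claim:realiser} then follows directly.
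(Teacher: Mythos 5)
Your proof is correct and follows essentially the same case analysis and choices of orderings as the paper's, so the argument goes through. One small imprecision worth noting: $S, U, D, W$ need not actually partition $V$, since $U=U[S]\setminus S$ and $D=D[S]\setminus S$ can intersect when $S$ contains a comparable pair; this is harmless here because your three main cases still cover every $x$, and for $x\in D\cap U$ either the $x\in S\cup U$ argument or the $x\in D$ argument applies correctly.
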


\noindent {\em Proof of Claim~\ref{claim:realiser}.\ } There are three cases of $x,y$ such that $x\nleq_P y$ in terms of the location of $x$: $x\in S\cup U$; $x\in W$ and $x\in D\setminus U$. We identify $L\in {\mathcal R}$ such that $y\le_L U_P[x]$ in each case below.

{\em Case 1:} $x\in S\cup U$.  Then, $U_P[x]\subseteq U\cup S\cup D$. If $y\in U\cup S\cup D$, then, since  $\{L^1_j\}_{1\le j\le d_1}$ is a realiser of $P[U\cup S\cup D]$, there must exists $j$ such that $y<_{L^1_j} U_P[x]$ and hence  $y<_{\pi_{j}} U_P[x]$. If $y\in W$ then obviously, $y<_{\pi_{1}} U_P[x]$.  

{\em Case 2}: $x\in W$. In this case, $U_P[x]\subseteq U\cup W$. If $y\in S$, then $y<_{\pi_1''} U_P[x]$. If $y\in V\setminus S$,
 then, then there exists $i$ such that $y<_{L^2_i} U_P[x]$, and hence, $y<_{\pi''_i} U_P[x]$.


{\em Case 3:} $x\in D\setminus U$. In this case, $U_P[x]$ may intersect all of $S,U,D,W$. We consider two subcases. If $y\in W$, then there exists $i$ such that $y<_{L^2_i} U[x]\setminus S$, and hence, $y<_{\pi'''_i}U_P[x]$. If $y\notin W$, then there exists $j$ such that $y<_{L^1_j} U_P[x]\setminus W$, and hence $y<_{\pi'_{j}} U_P[x]$.
\qed \smallskip

The proof for the second inequality is similar. Let $d_1=\dim P[D[S]]$, $d_2=\dim P[U[S]]$ and $d_3=\dim P[U\cup D]$. Let $\{L^1_j\}_{1\le j\le d_1}$ be a realiser of $P[D[S]]$,  $\{L^2_j\}_{1\le j\le d_2}$ be a realiser for $P[U[S]]$, and $\{L^3_j\}_{1\le j\le d_3}$ be a realiser for $P[U\cup D]$. Construct total orderings $\pi,\pi',\pi''$ on $U\cup S\cup D$ as follows:
\begin{align*}
&L_1^1|_{D\setminus U}<_{\pi_j} L^2_j, \quad j\in [d_2]\\
&L_1^i<_{\pi'_i} L_2^1|_{U\setminus D}, \quad i\in [d_1]\\
&L^3_k|<_{\pi''_k} L_1^1|_S, \quad k\in [d_3].
\end{align*}
Let ${\mathcal R}=\{\pi_{j},\  j\in [d_2]\}\cup \{\pi'_{i},\  i\in [d_1]\}\cup \{\pi''_{k},\ k\in[d_3]\}$.  It suffices to show that Claim~\ref{claim:realiser} holds here as well.
For pairs of $(x,y)$ such that $x\nleq_P y$ we consider the two cases in terms of location of $x$: $x\in S\cup U$, and $x\in D\setminus (S\cup U)$. We say a permutation $L$ reverses $(x,y)$ if $y<_L U[x]$.

{\em Case 1: $x\in S\cup U$.} In this case, $U[x]\in S\cup U$. If $y\in S\cup U$, then some $\pi_j$ reverses $(x,y)$. If $y\in D\setminus (S\cup U)$, every $\pi'_i$ reverses $(x,y)$. 

{\em Case 2: $x\in D\setminus (S\cup U)$.} If $y\in D\cup S$ then some $\pi'_i$ reverses $(x,y)$. If $y\in U\setminus (D\cup S)$ then some $\pi''_k$ reverses $(x,y)$. 
\qed

 \section{Dimension of random bipartite POS}

\subsection{The sparse regime: proof of Theorem~\ref{thm:mainbipartite}}

In this section, let $P\sim P_{\B(n,n,p=c_n/n)}$ and $0<c_n\le \log^2 n$.  We first demonstrate the lower bound and then the upper bound.

\subsubsection{Lower bound}
\label{sec:bipartitleLB}

Our proof uses ideas in~\cite{erdos1991dimension} but is much simpler (the proof in~\cite{erdos1991dimension} aims for a greater lower bound of order $c_n\log c_n$; but that proof fails for the range of $c_n$ considered in this paper). Our aim is to show that a.a.s.
\[
\dim P\ge \gamma_1 c_n/\log c_n.
\]

\begin{lemma}\label{lem:asymptotic} $0<\alpha_c\le 1$ for all $c\ge 2$, and
$\alpha_c \sim 2\log c/c$ as $c\to\infty$.
\end{lemma}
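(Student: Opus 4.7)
\begin{proofsketch}
Take logarithms of the defining equation $\frac{e^2}{x^2}e^{-cx}=1$: it is equivalent to
\[
g_c(x) := cx + 2\log x = 2.
\]
The plan is to use this reformulation for both claims. Since $g_c'(x)=c+2/x>0$ on $(0,\infty)$, the function $g_c$ is strictly increasing and sweeps from $-\infty$ to $+\infty$; hence $\alpha_c$ is indeed uniquely defined. To see that $\alpha_c\le 1$ for $c\ge 2$, just compute $g_c(1)=c\ge 2=g_c(\alpha_c)$ and invoke monotonicity.

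For the asymptotic $\alpha_c\sim 2\log c/c$, I would substitute the ansatz
\[
\alpha_c=\frac{2\log c+\delta_c}{c}
\]
into $c\alpha_c+2\log \alpha_c=2$. After cancelling the $2\log c$ from $c\alpha_c$ and $-2\log c$ from $2\log\alpha_c$, this reduces to the implicit equation
\[
\delta_c+2\log(2\log c+\delta_c)=2.
\]
The hard (but routine) part is to show that $\delta_c$ is much smaller than $\log c$, which one can do by a bootstrap: first note that $\delta_c\to-\infty$ is forced by the $+2\log(\,\cdot\,)$ term, and then observe that if $\delta_c=o(\log c)$ then $2\log(2\log c+\delta_c)=2\log\log c+2\log 2+o(1)$, so
\[
\delta_c=2-2\log 2-2\log\log c+o(1).
\]
This in turn is consistent with $\delta_c=o(\log c)$, closing the bootstrap and yielding $\alpha_c=(2\log c)(1+o(1))/c$ as $c\to\infty$.

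The only real obstacle is verifying that the bootstrap is legitimate, i.e.\ that some a priori upper bound on $|\delta_c|$ is available before we plug it back into $2\log(2\log c+\delta_c)$. A clean way to do this is to sandwich $\alpha_c$ between two explicit quantities by monotonicity of $g_c$: for instance, check that $g_c\bigl(\tfrac{\log c}{c}\bigr)<2<g_c\bigl(\tfrac{3\log c}{c}\bigr)$ for all sufficiently large $c$, which gives $\alpha_c\in[\log c/c,\,3\log c/c]$ directly from $g_c(\alpha_c)=2$. With this crude sandwich in hand, the refinement above then pins down the leading constant as $2$, completing the proof.
\end{proofsketch}
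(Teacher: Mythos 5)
Your proof is correct and rests on the same core idea as the paper's: monotonicity of the defining function gives uniqueness and lets you pin down $\alpha_c$ by evaluating at explicit test points. The paper works directly with $x^2e^{cx}$ and, for each fixed $\eps>0$, plugs in $(2\pm\eps)\log c/c$ to get the two-sided bound; you instead take logarithms to get $cx+2\log x=2$, establish a crude sandwich $\alpha_c\in[\log c/c,\,3\log c/c]$, and then bootstrap. Your version is slightly longer but cleaner to manipulate, and as a bonus it produces the second-order term $\delta_c = 2-2\log 2 - 2\log\log c + o(1)$, i.e.\ $\alpha_c = (2\log c - 2\log\log c + O(1))/c$, which the paper's argument does not extract. (Incidentally, the paper's displayed inequalities appear to have the roles of $x$ and $y$ transposed --- one expects $x^2e^{cx}>e^2$ and $y^2e^{cy}<e^2$ --- but the conclusion is unaffected; your computation avoids that slip.)
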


\proof $\tfrac{e^2}{x^2}e^{-cx}$ is a decreasing function of $x$, and is equal to $e^{2-c}$ when $x=1$. Hence, $0<\alpha_c\le 1$ if $c\ge 2$.
 Fix $\eps>0$. Let $x=(2+\eps)\log c/c$ and $y=(2-\eps)\log c/c$. It follows that for all sufficiently large $c$,
\[
x^2e^{cx} \le \exp(4+2\log\log c -\eps \log c) <e^2; \quad y^2e^{cy} \ge \exp(2\log \log c+\eps \log c) >e^2.  
\]
Thus, $ \alpha_c \sim 2\log c/c$ as $c\to\infty$.
\qed


\begin{lemma} \label{lem:Bproperties} Let $\eps>0$ be fixed; $c_n\ge 2$ and $\alpha=(1+\eps)\alpha_{c_n}$.
    \begin{enumerate}[(a)]
        \item A.a.s.\ $P$ has $(1-o(1))n^2$ incomparable pairs $(i,j)\in A\times B$.
        \item A.a.s.\ for all $S \subseteq A$ and $T \subseteq B$ where $|S|,|T|\ge\alpha n$, there is an edge between $S$ and $T$ in $\B(n,n,p)$.
    \end{enumerate}
\end{lemma}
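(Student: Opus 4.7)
For part (a), the key observation is that in $P_{\B(n,n,p)}$ a pair $(i,j) \in A\times B$ is comparable if and only if $ij$ is an edge of $\B(n,n,p)$ (there is no extra closure to worry about, since the relation is already bipartite). Hence the expected number of comparable pairs in $A\times B$ equals $n^2 p = c_n n \le n\log^2 n = o(n^2)$, and Markov's inequality immediately yields that a.a.s.\ all but $o(n^2)$ pairs are incomparable.

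For part (b), I plan to run a first-moment union bound. For fixed $S, T$ with $|S|=s$, $|T|=t$, the probability of no $S$--$T$ edge is $(1-p)^{st}$, so using $\binom{n}{s} \le (en/s)^s$ and $(1-p)^{st} \le e^{-pst}$,
\[
\Pr[\text{bad}] \le \sum_{s,t \ge \alpha n} \binom{n}{s}\binom{n}{t}(1-p)^{st} \le n^2 \max_{\alpha \le x,y \le 1} \exp\bigl(n\, g(x,y)\bigr),
\]
where $g(x,y) = x\log(e/x) + y\log(e/y) - c_n xy$. The task reduces to showing that $-n \cdot \max g$ dominates $\log n$ across the full range of $c_n$.

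The heart of the argument is the optimization of $g$ on $[\alpha,1]^2$, and this is where the definition \eqref{def:alpha_c} has been tuned to make things work. Computing $\partial_x g = -\log x - c_n y$, the derivative is negative whenever $x > e^{-c_n y}$; rewriting \eqref{def:alpha_c} as $e^{-c_n \alpha_{c_n}} = \alpha_{c_n}^2/e^2$ lets one verify that $e^{-c_n y} < \alpha$ for all $y \ge \alpha$ (the $(1+\eps)$ slack provides strict inequality). Thus $g$ is coordinate-wise decreasing on $[\alpha,1]^2$, its maximum is at the corner $(\alpha,\alpha)$, and using the identity $2-2\log\alpha_{c_n}-c_n\alpha_{c_n}=0$ gives
\[
g(\alpha,\alpha) = \alpha\bigl(2 - 2\log\alpha - c_n\alpha\bigr) = -\alpha\bigl(2\log(1+\eps) + \eps c_n \alpha_{c_n}\bigr).
\]
Since $c_n \alpha_{c_n} = 2 - 2\log\alpha_{c_n} \ge 2$ for $c_n \ge 2$ (because $\alpha_{c_n}\le 1$), the parenthetical is a positive constant depending only on $\eps$, so $n\,g(\alpha,\alpha) \le -C_\eps\, n\alpha_{c_n}$.

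The final step is to check $n\alpha_{c_n} \gg \log n$ uniformly for $2\le c_n \le \log^2 n$. Since $\alpha_c$ is monotone decreasing in $c$ and, by Lemma~\ref{lem:asymptotic}, $\alpha_{\log^2 n} = \Theta(\log\log n/\log^2 n)$, we have $n\alpha_{c_n} = \Omega(n\log\log n/\log^2 n) \gg \log n$, making $n^2 \exp(n\,g(\alpha,\alpha)) \to 0$. The main obstacle is exactly this uniformity across the whole range of $c_n$: the calibration \eqref{def:alpha_c} is what simultaneously ensures the coordinate-wise monotonicity of $g$ on $[\alpha,1]^2$ and the strict negativity of $g(\alpha,\alpha)$ with enough magnitude to beat $\log n$ for every admissible $c_n$.
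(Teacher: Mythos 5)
Your proof is correct, but part~(b) is significantly more work than necessary, and the extra work is the author's own invention rather than something the problem requires. The paper's argument for part~(b) is a one-liner built on a standard reduction: if there exist $S\subseteq A$, $T\subseteq B$ with $|S|,|T|\ge\alpha n$ and no $S$--$T$ edge, then some sub-pair with $|S'|=|T'|=\lceil\alpha n\rceil$ also has no edge, so one only needs a union bound over pairs of size exactly $\alpha n$. This gives
\[
\binom{n}{\alpha n}^2(1-p)^{\alpha^2n^2}\le\left(\frac{e^2}{\alpha^2}e^{-c_n\alpha}\right)^{\alpha n}\le\exp(-\eps c_n\alpha_{c_n}^2 n)=o(1)
\]
directly from the defining equation~\eqref{def:alpha_c}, with no need to verify coordinate-wise monotonicity of $g(x,y)$ on $[\alpha,1]^2$. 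Your optimization of $g$ via the sign of $\partial_x g$ and the corner-maximum argument is sound and produces the same conclusion, but it re-derives a fact that is free once you remember to restrict to sets of size exactly $\alpha n$. For part~(a), your Markov/first-moment argument (expected number of edges is $n^2p\le n\log^2 n=o(n^2)$) is a clean and valid alternative to the paper's maximum-degree argument; both are essentially the same observation. In short: correct, but the $g$-optimization machinery in part~(b) can be deleted entirely in favor of the monotone-subset reduction, which is the route the paper takes.
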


\proof Part (a) is trivial as the maximum degree of $\B(n,n,p)$ is $O(\log^2 n)$ if $p\le \log^2 n/n$. For part (b), the probability that there is a set $S \subseteq A$ and $T \subseteq B$ with size at least $\alpha n$ such that there is no edge between $S$ and $T$ is at most $${n \choose \alpha n}^2 (1-p)^{\alpha^2 n^2} \le \left( \frac{e}{\alpha} \right)^{2 \alpha n} e^{-p \alpha^2 n^2} = \left( \frac{e^2}{\alpha^2} e^{-c \alpha} \right)^{\alpha n}\le \exp\left(-c_n\eps \alpha^2_{c_n} n \right)=o(1),$$ where the last equality holds by Lemma~\ref{lem:asymptotic}.\qed

\medskip

Let $(i,j)\in A\times B$ be an incomparable pair of $P_{\B(n,n,p)}$. We say that a permutation $L$ on $[2n]$ reverses $(i,j)$ if $j<_L i$.
Using the above properties in Lemma~\ref{lem:Bproperties} we prove the following. 
\begin{lemma} \label{lem:reversible} Let $\eps>0$ be fixed. Suppose $c_n\ge 2$ and let $\alpha=(1+\eps)\alpha_{c_n}$.
 A.a.s.\ every linear extension $L$ of $P$ reverses at most $2\alpha n^2$ incomparable pairs of  $P$.
\end{lemma}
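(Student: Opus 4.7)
My plan is to condition on the a.a.s.\ event given by Lemma~\ref{lem:Bproperties}(b), and then establish the bound $|R(L)|\le 2\alpha n^2$ deterministically for every linear extension $L$ of $P$.

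The first step is to rewrite $|R(L)|$ as a sum that is easy to bound. Order the elements of $A$ in the order they appear in $L$ as $a_1<_L a_2<_L\cdots<_L a_n$, and let $\sigma(k)$ denote the position of $a_k$ in $L$. For any $i\in A$ and any $j\in B$ with $j<_L i$, the pair $i,j$ must be incomparable in $P$: otherwise $i<_P j$ would hold and, since $L$ is a linear extension, this would force $i<_L j$, contradicting $j<_L i$. Consequently the number of reversed incomparable pairs involving $a_k$ equals $|\{j\in B: j<_L a_k\}|=\sigma(k)-k$, and therefore
\[
|R(L)| \;=\; \sum_{k=1}^n(\sigma(k)-k).
\]

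The main step is the claim that $\sigma(k)-k<\alpha n$ whenever $k\le n-\alpha n+1$. Fix such a $k$, set $S=\{a_k,a_{k+1},\ldots,a_n\}$ and $T=\{j\in B: j<_L a_k\}$, and note that $|S|=n-k+1\ge \alpha n$ and $|T|=\sigma(k)-k$. For each pair $(i,j)\in S\times T$ one has $j<_L a_k\le_L i$, so the same linear-extension argument as above shows that $ij$ cannot be an edge of $\B(n,n,p)$. Hence there are no edges between $S$ and $T$, and by the contrapositive of Lemma~\ref{lem:Bproperties}(b), since $|S|\ge \alpha n$, we must have $|T|<\alpha n$, proving the claim.

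Finally, I would substitute back. For $k\le n-\alpha n+1$ use $\sigma(k)-k<\alpha n$, and for the remaining (at most $\alpha n-1$) values of $k$ use the trivial bound $\sigma(k)-k\le n$. This yields
\[
|R(L)| \;\le\; (\alpha n-1)(n-\alpha n+1)+n(\alpha n-1) \;\le\; 2\alpha n^2,
\]
where the last inequality uses $\alpha\le 1$, guaranteed by Lemma~\ref{lem:asymptotic}. There is no real obstacle in this plan; the heart of the argument is simply choosing $S$ and $T$ so that the contrapositive of Lemma~\ref{lem:Bproperties}(b) applies, after which the remaining work is routine counting.
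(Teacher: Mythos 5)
Your proof is correct and uses the same key ingredient as the paper: condition on Lemma~\ref{lem:Bproperties}(b) and use it to bound, for each element of $A$, how many elements of $B$ can precede it in $L$. The bookkeeping differs slightly. The paper makes a single application of Lemma~\ref{lem:Bproperties}(b), to the top $\alpha n$ elements of $A$ and the bottom $\alpha n$ elements of $B$ under $L$, to extract one witness edge $ij$; transitivity of $<_L$ together with $i<_L j$ then forces every reversed pair to have $x\in\overline{A}$ or $y\in\underline{B}$, giving $|F|\le 2\alpha n^2$ in one stroke. You instead invoke the contrapositive once per index $k$, taking $S=\{a_k,\ldots,a_n\}$ and $T=\{j\in B:j<_La_k\}$ and concluding $|T|<\alpha n$ whenever $|S|\ge\alpha n$; summing gives the same bound (in fact slightly tighter, since the paper double-counts pairs touching both extreme sets). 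Both are sound and elementary. One small slip: your appeal to $\alpha\le1$ via Lemma~\ref{lem:asymptotic} is not justified, since that lemma gives $\alpha_{c_n}\le 1$ while $\alpha=(1+\eps)\alpha_{c_n}$ may exceed $1$ (indeed $\alpha_2=1$). This is harmless because $(\alpha n-1)(2n-\alpha n+1)\le 2\alpha n^2$ holds unconditionally, which is the inequality you actually need, so you can simply drop the $\alpha\le 1$ claim.
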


\proof Let $L$ be a linear extension of $P$. Let $\overline{A}$ denote the greatest $\alpha n$ elements in $A$, and let $\underline{B}$ denote the smallest $\alpha n$ elements in $B$ with respect to the total order $L$. By property (b) of Lemma~\ref{lem:Bproperties}, there exist $i\in \overline{A}$ and $j\in \underline{B}$ such that $i<_P j$. 
Let $F$ be the set of incomparable pairs that are reversed by $L$. Suppose $(x,y)\in F$. Then, either $x\ge_L i$ or $y\le_L j$, as otherwise the edge $ij$ in $\B(n,n,p)$ would imply that $x\le_L i<_L j\le_L y$, contradicting with $(x,y)\in F$. Consequently, for every $(x,y)\in F$, either $x\in \overline{A}$ or $y\in \underline{B}$. It follows then that $|F|\le |\overline{A}|n+|\underline{B}|n\le 2\alpha n^2$. \qed
\smallskip

To prove the lower bound in Theorem~\ref{thm:mainbipartite}, notice that by Lemma~\ref{lem:Bproperties}(a) and Lemma~\ref{lem:reversible}, for any fixed $\eps>0$, a.a.s.\ $$\dim P\ge \frac{(1-o(1))n^2}{2 \alpha n^2} \sim \frac{1}{2 \alpha}$$ where $\alpha=(1+\eps)\alpha_c$. This yields our desired bound in part (a). The lower bound in part (b) follows by Lemma~\ref{lem:asymptotic}.

\subsubsection{Upper bound}

Let 
\begin{equation}
\alpha=Kc_n\label{def:alpha}
\end{equation}
where $K>0$ is a fixed constant (independent of $c_n$) that is to be specified later.
Let $S_A=\{v\in A: \deg(v)\ge \alpha\}$ and $S_B=\{v\in B: \deg(v)\ge \alpha\}$, where $\deg(v)$ denotes the degree of $v$ in $\B(n,n,p)$.

\begin{lemma}\label{lem:keybipartite}
A.a.s.\ $\dim P[U[S_A]]\le 4$ and $\dim P[D[S_B]]\le 4$.
\end{lemma}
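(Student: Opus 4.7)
The plan is to reduce the lemma to the structural claim that a.a.s.\ every connected component of the cover graph of $P[U[S_A]]$ is either a tree or unicyclic. Once that is established, Theorems~\ref{thm:Trotter} and~\ref{thm:Brightwell} applied component-wise yield dimension at most $4$ per component, and the disjoint union has the same bound, so $\dim P[U[S_A]]\le 4$.

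Since $P$ is bipartite with $A<B$, we have $U[S_A]=S_A\cup N_B(S_A)$, and the cover graph of $P[U[S_A]]$ is exactly the subgraph $H$ of $\B(n,n,p)$ induced on $S_A\cup N_B(S_A)$. A connected graph has cyclomatic number at least $2$ if and only if it contains, as a subgraph, at least one of: a $\theta$-graph (two vertices joined by three internally disjoint paths), a figure-eight (two cycles sharing exactly one vertex), or a barbell (two vertex-disjoint cycles joined by a path). I would show that a.a.s.\ $H$ contains none of these obstructions, which forces every component of $H$ to be a tree or unicyclic.

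The key estimate is that for any such obstruction $F$ with $V$ vertices, $E=V+1$ edges, and $a$ vertices on the $A$-side,
\[
E[\#\text{copies of }F\text{ in }H]\le O(1)\cdot n^{V}\cdot\left(\frac{c}{n}\right)^{\!E}\cdot\left((e/K)^{Kc}\right)^{\!a}=O\!\left(\frac{c^{V+1}}{n}\right)\cdot \left((e/K)^{Kc}\right)^{\!a}.
\]
The first two factors come from embedding $F$ into $\B(n,n,p)$; the factor $((e/K)^{Kc})^{a}$ comes from requiring each of the $a$ $A$-vertices of $F$ to lie in $S_A$, i.e.\ to have $\B$-degree at least $\alpha=Kc$. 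Conditioning on the $E$ edges of $F$, each such $A$-vertex $v$ requires at least $Kc-\deg_F(v)\ge Kc-3$ further neighbors in $B\setminus N_F(v)$, an event of probability $O((e/K)^{Kc})$ by a standard Chernoff/Stirling bound, and the $a$ such events are mutually independent since they concern disjoint edge variables. Since every vertex in any such $F$ has degree at most $3$, we have $a\ge E/3=(V+1)/3$, so the expected count is at most $O(1/n)\cdot(c\cdot(e/K)^{Kc/3})^{V+1}$. Choosing $K$ a sufficiently large absolute constant ensures $c\cdot(e/K)^{Kc/3}<1/2$ uniformly for all $c_n\in[2,\log^2 n]$, and summing over all shapes of obstructions (parametrized by $V\ge 5$ and a polynomial-in-$V$ number of path-length configurations) gives $E[\#\text{obstructions in }H]=O(1/n)=o(1)$. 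Markov then yields the desired structural conclusion a.a.s.

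Applying Theorem~\ref{thm:Trotter} to tree components and Theorem~\ref{thm:Brightwell} to unicyclic components yields dimension $\le 4$ for each connected piece of $P[U[S_A]]$. Every vertex of $H$ is non-isolated ($S_A$-vertices have $H$-degree $\ge Kc\ge 1$, and $N_B(S_A)$-vertices each have at least one $S_A$-neighbor by construction), and the disjoint union of posets of dimension $\le 4$ again has dimension $\le\max(4,2)=4$ (two of the four orderings can be arranged to reverse all cross-component incomparable pairs), so $\dim P[U[S_A]]\le 4$. The bound $\dim P[D[S_B]]\le 4$ follows by the identical argument applied to the dual poset of $P$, which has the same distribution with the roles of $A$ and $B$ exchanged. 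The main obstacle is controlling the sum over all shapes of $\theta$-graphs, figure-eights and barbells uniformly in $c_n$, which is handled by the exponential-in-$K$ factor contributed by each $A$-vertex of the obstruction together with the bound $a\ge(V+1)/3$.
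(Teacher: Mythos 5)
Your proposal follows essentially the same route as the paper: reduce to the structural claim that a.a.s.\ every component of the subgraph of $\B(n,n,p)$ induced on $U[S_A]=S_A\cup N_B(S_A)$ is a tree or unicyclic, then apply Theorems~\ref{thm:Trotter} and~\ref{thm:Brightwell} componentwise and take the max over components. The paper phrases the obstruction as a ``connected bicyclic subgraph $H$ with $V(H)\cap A\subseteq S_A$'' (Lemma~\ref{lem:Properties}), where ``bicyclic'' means exactly two degree-$3$ vertices or one degree-$4$ vertex and the rest degree $2$ --- which is precisely your classification into $\theta$-graphs, figure-eights, and barbells. The paper's first-moment bound has the same ingredients: $\binom{n}{i}\binom{n}{j}$ for vertex placement, a shape count (Claim~\ref{lem:bipartitecount}), $p^{t}$ for the edges of $H$, and $\binom{n}{\alpha-4}^i p^{(\alpha-4)i}$ for forcing the $A$-vertices into $S_A$, exactly paralleling your $((e/K)^{Kc})^a$ factor with the disjoint-edge-variable independence observation. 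One small slip: you assert that every vertex of every obstruction has degree at most $3$, hence $a\ge E/3$; this fails for the figure-eight, whose central vertex has degree $4$. The correct uniform bound is $a\ge E/4=(V+1)/4$ (indeed, since $F$ is bipartite, the sum of $A$-side degrees equals $E$, and each $A$-vertex has degree at most $4$), and the rest of the argument goes through unchanged with $(e/K)^{Kc/4}$ in place of $(e/K)^{Kc/3}$, still killed by choosing $K$ large.
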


{\em Proof of Theorem~\ref{thm:mainbipartite}.\ } The lower bounds in parts (a,b) have been demonstrated in Section~\ref{sec:bipartitleLB}.  
For the upper bound in part (b), by Theorem~\ref{thm:deterministicBipartitie}, a.a.s.\ 
\begin{eqnarray*}
    \dim P_{\B(n,n,p)} &\le& \dim P[U[S_A]]+\dim P[A\cup B\setminus S_A]\\
    &\le&  \dim P[U[S_A]] + \dim P[D[S_B]\cap (A\setminus S_A)] + \dim P[(A\setminus S_A)\cup (B\setminus S_B)]\\
    &\le & \dim P[U[S_A]] + \dim P[D[S_B]] + \dim P[(A\setminus S_A)\cup (B\setminus S_B)],
\end{eqnarray*}
   By Theorem~\ref{thm:Kahn}, $\dim P[(A\setminus S_A)\cup (B\setminus S_B)] = O(c\log^2 c)$. Hence,
    the desired upper bound follows by Lemma~\ref{lem:keybipartite}.\qed

\subsubsection{Proof of Lemma~\ref{lem:keybipartite}}

The proof for the second bound is symmetric to the first one. Thus, we focus on the first assertion of Lemma~\ref{lem:keybipartite}. We call a connected graph {\em bicyclic}, if there are exactly two vertices with degree three, or there is exactly one vertex with degree four, and all the other vertices have degree equal to two. 

\begin{lemma}\label{lem:Properties} 
A.a.s.\ there is no connected bicyclic subgraph $H$ of $\B(n,n,p=c_n/n)$ such that $V(H)\cap A\subseteq S_A$.

   
\end{lemma}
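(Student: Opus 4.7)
The plan is to prove Lemma~\ref{lem:Properties} by a first moment argument. Let $X$ count the connected bipartite restricted-bicyclic subgraphs $H$ of $\B(n,n,p)$ with $V(H)\cap A\subseteq S_A$; the goal is to show $\mathbb{E}[X]=o(1)$ and apply Markov's inequality. The first ingredient is the rigid structure of such an $H$: by the definition preceding the lemma, $H$ has $|V(H)|+1$ edges, minimum degree $2$, maximum degree at most $4$, and falls into one of three shapes (a theta graph; two cycles sharing a single vertex; two cycles joined by an internally-disjoint path). A direct case analysis over these shapes, combined with the fact that every cycle in a bipartite graph has even length, shows that if $H$ is bipartite with $a$ vertices in $A$ and $b$ vertices in $B$, then $|a-b|\le 1$. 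Hence for a fixed $v=a+b$ only two or three pairs $(a,b)$ contribute, each satisfying $a,b\ge\lfloor v/2\rfloor$.

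Next fix a template $H$ with $a$ $A$-vertices, $b$ $B$-vertices and $v+1$ edges. Then $\pr[E(H)\subseteq E(\B)] = p^{v+1}$. Conditional on this event, for each $u\in A\cap V(H)$ the edges from $u$ to $B$-vertices outside its $H$-neighbourhood remain untouched random variables, so $\deg_\B(u)=d_H(u)+Y_u$ with $Y_u\sim\operatorname{Bin}(n-d_H(u),p)$, and the $Y_u$ are independent across distinct $u$ since different $A$-vertices have disjoint edge sets. Since $d_H(u)\le 4$, for any $K\ge 8$ one has $Kc-d_H(u)\ge Kc/2$, and a standard Chernoff bound gives
\[
\pr\bigl[\deg_\B(u)\ge Kc \,\big|\, E(H)\subseteq E(\B)\bigr]\le e^{-g(K)c},
\]
where $g(K)=I(K/2)$ with $I(t)=t\log t-t+1$ the Poisson large-deviation rate, so that $g(K)\to\infty$ as $K\to\infty$. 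Multiplying over the $a$ $A$-vertices contributes a factor $e^{-g(K)ca}$ to the probability.

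Bounding the number of connected bipartite restricted-bicyclic templates on labelled parts of sizes $(a,b)$ crudely by $\binom{ab}{v+1}\le (ev/4)^{v+1}$ and using $\binom{n}{a}\binom{n}{b}\le (2en/v)^{v}$ when $|a-b|\le 1$, the bipartite balance lets me collapse the double sum into a single sum in $v$:
\[
\mathbb{E}[X]\le O\!\left(\tfrac{c}{n}\right)\sum_{v\ge 4} v\left(\tfrac{e^2 c}{2}\, e^{-g(K)c/2}\right)^{v}.
\]
Choosing $K$ to be a sufficiently large absolute constant (for example $K=10$, which yields $g(K)=I(5)>2$) ensures $g(K)/2>\log(e^2c/2)/c$ uniformly over $c\in[2,\log^2 n]$, because the right hand side, as a function of $c$, is maximised at $c=2$ with value $1$. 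The geometric ratio then falls below $1/2$, the $v$-sum is $O(1)$, and hence $\mathbb{E}[X]=O(c/n)=o(1)$, as desired.

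The main obstacle will be the bipartite balance $|a-b|\le 1$: without it, the crude template count gives a double sum whose $b$-component diverges, since the Chernoff penalty acts only on the $A$-side. Only after the balance reduces the problem to a single sum in $v$ does the Chernoff penalty dominate the combinatorial and exponential-in-$c$ growth of the template count, so the bulk of the careful work goes into verifying the balance claim across the three structural shapes and then optimising the constants so that a single fixed $K$ (independent of $c_n$ and $n$) makes the geometric ratio strictly less than $1$.
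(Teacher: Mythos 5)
Your proof is correct and follows the same high-level strategy as the paper's: a first-moment union bound over connected bicyclic subgraphs $H$ with $V(H)\cap A\subseteq S_A$, using the bipartite degree-sum argument to force $|a-b|\le 1$ (the paper phrases this as $i-1\le j\le i+1$), which is exactly what makes the sum over $v=a+b$ converge. The one substantive tactical difference is how you handle the degree condition $\deg_\B(u)\ge Kc$: the paper bounds this by choosing, for each $A$-vertex of $H$, a set of $\alpha-4$ additional $B$-neighbours and multiplying by $\binom{n}{\alpha-4}^a p^{(\alpha-4)a}$; you instead observe that the residual degrees $Y_u\sim\operatorname{Bin}(n-d_H(u),p)$ are independent given $E(H)\subseteq E(\B)$ and apply a Chernoff/Poisson large-deviation bound. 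Your route is a bit cleaner and gives a much smaller admissible $K$ (roughly $10$ versus the paper's $K>32e^2$), which is immaterial for the statement but is a nice sharpening. Your template count $\binom{ab}{v+1}\le(ev/4)^{v+1}$ is cruder than the paper's structured $t^2t^t$ but still suffices. One small bookkeeping point: when $v$ is odd the relevant $A$-side may have only $a=(v-1)/2$ vertices, so your Chernoff factor is $e^{-g(K)c(v-1)/2}$ rather than $e^{-g(K)cv/2}$, leaving a stray $e^{g(K)c/2}$ in the prefactor; this does not hurt because the sum starts at $v\ge 5$ and the geometric decay $r^v\le r^5=O(e^{-5g(K)c/2})$ absorbs it, but it should be tracked so that the final bound reads $O(\operatorname{poly}(c)/n)$ rather than literally $O(c/n)$, which is still $o(1)$ for $c\le\log^2 n$.
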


\proof Suppose that $H$ is a connected bicyclic subgraph of $\B(n,n,p)$ with $i$ vertices in $S_A$, $j$ vertices in $B$. Let $t=|E(H)|$. Then, $t=i+j+1$ and moreover, $i-1\le j\le i+1$.  There are $\binom{n}{i}$ ways to choose $V(H)\cap A$, $\binom{n}{j}$ ways to choose $V(H)\cap B$. Then, there are at most $\binom{i+j}{2}$ ways to choose the vertices that have degree greater than two. Finally, 
we claim the following bound on the number of ways to form a bipartite graph on the set of chosen vertices.
\begin{claim}\label{lem:bipartitecount}
There are at most $t^2t^t$ ways to form a bipartite bicyclic graph on the set of chosen vertices. 
\end{claim}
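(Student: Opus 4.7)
\medskip

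\noindent\textbf{Proof plan for Claim~\ref{lem:bipartitecount}.} My plan is to give a uniform upper bound on the number of connected bicyclic graphs on $v := i+j$ labeled vertices; since every bipartite bicyclic graph on the prescribed bipartition is in particular such a graph, this will suffice.

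I would start by determining the number of edges from the degree sequence. In both cases of the definition---two vertices of degree $3$, or one vertex of degree $4$, with all remaining vertices of degree $2$---a handshake sum gives $2|E(H)| = 2v+2$, hence $|E(H)| = v+1 = t$. In particular, every such graph arises as a spanning tree together with exactly two extra edges.

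I would then bound the number of such decompositions. By Cayley's formula there are $v^{v-2}$ labeled spanning trees on $v$ vertices, and the pair of extra edges can be chosen in at most $\binom{\binom{v}{2}}{2} \le v^{4}$ ways. Every bicyclic graph admits at least one such decomposition (take any spanning tree; the remaining two edges are determined), so the number of bicyclic graphs on $v$ labeled vertices is at most $v^{v-2}\cdot v^{4} = v^{v+2}$. Substituting $v = t-1$ yields $v^{v+2}\le t^{t+1}\le t^{2}\cdot t^{t}$, which is the desired bound.

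There is no real obstacle: the argument reduces to Cayley's formula plus a crude edge-choice count. The only point to be mindful of is that the (tree, pair-of-edges) representation of a bicyclic graph is far from unique, so the count overcounts; this is harmless since we only need an upper bound. If one wished to sharpen the constant, one could replace $v^{v-2}$ by the bipartite spanning-tree count $i^{j-1}j^{i-1}$ of $K_{i,j}$ and restrict the two extra edges to the $i\cdot j$ cross pairs, but the claimed bound $t^{2}t^{t}$ is already loose enough that the general bound suffices.
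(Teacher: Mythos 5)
Your argument is correct and takes a genuinely different route from the paper. The paper proves the claim by directly specifying, for each of the $i$ vertices on the $A$-side, its set of (at most four) neighbours in $B$, and splits into three small cases depending on the degree sequence of the $A$-side; each case yields $\le j^{2i+2}$, which is $\le t^{t+2}$ because $t\ge 2i$ and $j\le t$. You instead forget the bipartite structure entirely and bound the number of \emph{all} connected bicyclic graphs on $v=i+j=t-1$ labeled vertices by decomposing such a graph as (spanning tree) $\cup$ (two extra edges), giving $v^{v-2}\cdot v^4=v^{v+2}=(t-1)^{t+1}\le t^2t^t$ via Cayley's formula. Your version avoids the degree-sequence case split and is more self-contained (Cayley plus a crude edge count), at the price of ignoring the bipartite constraint the paper uses; the paper's version is more ad hoc but would extend more cleanly if one later wanted the sharper bipartite count. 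Either argument is fully adequate for the application, since only the order of $\log(\text{count}) \approx t\log t$ matters in the ensuing union bound.
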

After fixing the edges in $H$, every vertex in $V(H)\cap A$ must be adjacent to at least $\alpha-4$ other vertices in $B$. There are at most $\binom{n}{\alpha-4}^i$ ways to choose these neighbours. It follows that the probability of having a connected bicyclic subgraph $H$ with $i$ vertices in $S_A$ is bounded from above by

\begin{align*}
&\sum_{j=i-1}^{i+1}\binom{n}{i}\binom{n}{j}\binom{i+j}{2}t^2 t^t\binom{n}{\alpha-4}^i p^{t+(\alpha-4)i}\\
&\hspace{1cm} \le \sum_{j=i-1}^{i+1}\frac{ n^{-1}}{(i/e)^i (j/e)^j}(i+j+1)^4(i+j+1)^{i+j+1} \frac{c_n^{i+j+1+(\alpha-4)i}}{((\alpha-4)/e)^{i(\alpha-4)}}\\
&\hspace{1cm} =O( n^{-1} c_n i^5) \sum_{j=i-1}^{i+1}\left(\frac{ec_n}{\alpha-4}\right)^{i(\alpha-4)} \left(\frac{4ec_ni  }{i}\right)^{i} \left(\frac{4eci  }{j}\right)^{j} \quad \text{since $i+j+1\le 4i$}\\
&\hspace{1cm} =O( n^{-1} c_n i^5) \left(\frac{2e}{K}\right)^{i(Kc_n-4)} \left(8ec_n \right)^{2i+1} \quad \text{since $Kc_n-4\ge Kc_n/2$ for large $K$}\\
&\hspace{1cm} =O( n^{-1} c_n i^5) \left(\frac{32e^2}{K}\right)^{i(Kc_n-4)}\quad \text{since $(8ec_n)^{ 2i+1} \le (16 e)^{i(Kc_n-4)} $ for large $K$}.
\end{align*}
Hence, the probability of having a subgraph $H$ described in the lemma is at most
\[
\sum_{i\ge 2} O( n^{-1} c_n i^5) \left(\frac{32e^2}{K}\right)^{i(Kc_n-4)} =o(1),
\]
provided that $K> 32e^2$. 

\noindent{\em Proof of Claim~\ref{lem:bipartitecount}.\ } There are three cases in terms of the degrees of the $i$ vertices in $A$: (a), all of them have degree 2; (b), one of them have degree 3 or 4, and all of the rest have degree 2; (c) two of them have degree 3, and all of the rest have degree 2. 
It suffices to specify the neighbours of each vertex in $i$ which immediately yields an upper bound $(j^2)^i\le j^{2+2i}$ for case (a); an upper bound $(j^2)^{i-1} j^4\le j^{2+2i}$ for case (b), and an upper bound $(j^2)^{i-2} (j^3)^2\le j^{2+2i}$ for case (c). As $t=i+j+1$ and $j\ge i-1$, it follows that $t\ge 2i$. Our claimed bound follows immediately.\qed

\smallskip




\remove{

Construct a graph $Q_A$ 
on vetex set $S_A$, 
with weighted edges given by the weight function $w$ as follows. There is an edge of weight $z$ between two vertices $u, v \in S_A$ if $|U(u)\cap U(v)|=z\ge 1$. If $|U(u)\cap U(v)|=0$ then  there is no edge between $u$ and $v$. Recall constant $K>0$ in~(\ref{def:alpha}).

\begin{lemma}\label{lem:PropertiesQ} Provided that $K$ is sufficiently large,
we have the following results about $Q_A$ which hold a.a.s.\ 
\begin{enumerate}[(a)]
    \item No edges of $Q_A$ have weight greater than 2.
    \item Every component has at most one cycle. \jc{This is wrong.}
    \item No components can have more than one edge of weight 2.
    \item Any component with an edge of weight 2 cannot contain a cycle.
\end{enumerate}
\end{lemma}

\proof (a): Any edge in (a) implies the existence of a copy of $K_{3,2}$ in $\G_n$.

(c): 

Given a component $C \subseteq Q_A$, let $P_C$ denote the sub-poset of $P$ induced by $U_P[C]$, which is the sub-poset induced by $C\cup N_G(C)$.
\begin{lemma}
   A.a.s.\ for every component $C$ in $Q_A$, $\dim P_C\le 4$.
\end{lemma}
\proof Suppose that $Q_A$ has all the properties in Lemma~\ref{lem:PropertiesQ}. It is easy to see that $U[C]$ is either a tree, or unicyclic. By Theorem~\ref{thm:Brightwell}, $\dim P_C\le 4$.\qed

}

\noindent {\em Proof of Lemma~\ref{lem:keybipartite}.\ } Let $L$ be a component of the subgraph $L$ of $\B(n,n,p)$ induced by $U[S_A]$. Suppose $L$ has more than one cycle. Then, there is a connected bicyclic subgraph of $L$ satisfying the condition of Lemma~\ref{lem:Properties}(b). Hence, we may assume that $L$ is either a tree or is unicyclic. Hence, $\dim P[U[S_A]]=\max\{2,\dim P[C]: \text{$C$ is a component of $L$}\}\le 4$ by Theorems~\ref{thm:Trotter} and~\ref{thm:Brightwell}. \qed

\subsection{The co-sparse regime: proof of Theorem~\ref{thm:bipartiteLargep}}

Let $\bar B$ denote the bipartite complement of $B\sim \B(n,n,p)$.  Then, $\bar B\sim \B(n,n,q)$ where $q=1-p$.

\subsubsection{Proof of part (a)}

\begin{claim} Suppose that $q=O(1/n)$ and $q=\omega(1/n^2)$.
A.a.s.\ $\bar B$ contains $\Theta(n^2 q)$ isolated edges, i.e.\ components whose order is exactly two.
\end{claim}

\proof Let $X$ denote the number of isolated edges in $\bar B$. Then,
\[
\ex X=n^2 q (1-q)^{2(n-1)}=\Theta(n^2 q) \quad \text{as $q=O(1/n)$},
\]
and
\[
\ex X(X-1)= n^2(n-1)^2 q^2 (1-q)^{4(n-1)-2} \sim (\ex X)^2.
\]
It follows by Chebyshev's inequality that a.a.s.\ $X\sim \ex X$. \qed \smallskip

We first prove the lower bound on $\dim P_{\B(n,n,p)}$. Let $e_1,\ldots, e_m$ be the set of isolated edges in $\bar B$. Then, a.a.s.\ $m=\Theta(nq^2)$.
Let $(A',B')\subseteq (A,B)$ be the set of vertices in $\cup_{j=1}^m \{e_j\}$. Consider the sub-order $P'$ of $P$ induced by $A'\cup B'$. We observe that $P'$ is a bipartite   
order such that every element in $A'$ is comparable to all but exactly one element in $B'$, and the same holds for every element in $B'$. It is well known that the dimension of $P'$ is equal to $|A'|=m$. Immediately, $\dim P_{\B(n,n,p)}\ge \dim P'=\Theta(n^2 q)$.

Next, we prove the two upper bounds.  First we prove that $\dim P_{\B(n,n,p)} =n- \Omega (n)$. Let $A'\subseteq A$ be the set of elements that are comparable to every element in $B$, and let $B'\subseteq B$ be the set of elements that are comparable to every element in $A$. Let $P'$ be the sub-order of $P$ induced by $(A\setminus A')\cup (B\setminus B')$.

\begin{claim}\label{claim:dimPP'}  $\dim P\le  \dim P' +1$. Moreover,
a.a.s.\ $|A\setminus A'|, |B\setminus B'| \sim n (1-p^{n}) $.
\end{claim}
\proof Let
$d=\dim P'$ and $L_1,\ldots, L_d$ be a realiser of $P'$. Let $L$ be an arbitrary total ordering on $A\cup B$, and $L'$ be the reverse ordering of $L$. For each $1\le j\le d$, extend $L_j$ to $A\cup B$ by $L|_{A'}<L_j|_{(A\setminus A')\cup (B\setminus B')}<L|_{B'}$. Then, add to this set another total ordering defined by $L_1|_{(A\setminus A')} < L'|_{A'}<L'|_{B'}< L_1|_{(B\setminus B')}$. Obviously this set of $d+1$ total orderings is a realiser for $P$.

Let $X=|A\setminus A'|$. Then, $\ex X=n(1-p^{n})$, and $\ex X(X-1)=n(n-1)(1-p^{n})^2 \sim (\ex X)^2$. Thus, by Chebyshev's inequality, a.a.s.\ $X\sim n (1-p^{n})$. The same statement holds for $|B\setminus B'|$ by symmetry.
 \qed \smallskip

By the range of $p$, $n (1-p^{n}) = n-\Omega(n)$ and thus, a.a.s.\ $\dim P=n-\Omega(n)$ by Claim~\ref{claim:dimPP'}. Next, we show that a.a.s.\ $\dim P=O(n^2 q)$, which holds trivially if $q=\Theta(1/n)$.  Suppose that $q=o(1/n)$. Then, by Claim~\ref{claim:dimPP'}, a.a.s.\  $|A\setminus A'|, |B\setminus B'|\sim n-ne^{-qn} \sim  n^2q$, and consequently, a.a.s.\ $\dim P =O(n^2q)$. \qed








\subsubsection{Proof of part (b)} 

Suppose that $q=1-p=\omega(1/n)$ and $q\le n^{-1+1/7}$.  We need the following properties for $\B(n,n,q)$.
\begin{lemma}\lab{lem:Bp} Let $\eps>0$ be fixed. There exists a constant $C>0$ such that
a.a.s.\ $\B(n,n,q)$ has the following properties:
\begin{enumerate}[(P1)]
\item there are $(1+o(1))qn^2$ edges;
\item 
$
d_v\le C(qn+\log n),\quad \text{for all $v\in A\cup B$},
$ where $d_v$ denotes the degree of $v$ in $\B(n,n,q)$.
\item there are no subgraphs isomorphic to $K_{2,3}$;
\item   
$$
\sum_{v\in A\cup B} d_v \ind{d_v\ge (1+\eps)qn} =o(qn^2).
$$
\end{enumerate}
\end{lemma}

Let $B\sim {\B(n,n,p)}$ and $\bar B$ the bipartite complement of $B$. Then $\bar B\sim \B(n,n,q)$.
Suppose that $\R$ is a realiser for $P=P_{\B(n,n,p)}$. The following lemma gives a set of properties of $P$ assuming that $\bar B$ satisfies properties (P1)--(P4) of Lemma~\ref{lem:Bp}.
\begin{lemma}\lab{lem:Realiser} Assume that $\bar B$ satisfies Lemma~\ref{lem:Bp} (P1)--(P4). Let $\bar d_v$ denote the degree of $v$ in $\bar B$.
Let $L\in \R$. Let $y_1$ be the smallest element in $B$ with respect to $L$ and let $x_k<_Lx_{k-1}<_L\cdots<_Lx_1$ be the $k$ greatest elements in $A$ with respect to $L$ such that $x_k<_P y_1$ and all elements $x_{k-1},\ldots, x_1$ are incomparable to $y_1$ in $P$. Then,
 \begin{enumerate}[(a)]
 \item $k\le C(qn+\log n)$;
 \item If $(x,y)\in A\times B$ is incmparable in $P$ and is reversed by $L$ then $x\in \{x_1,\ldots, x_{k-1}\}$;
 \item For each $1\le i\le k-1$, let $R_i$ denote the set of $y\in B$ such that $(x_i,y)$ is incomparable in $P$ and is reversed by $L$. Then, $y||x_j$ for all $y\in R_i$ and all $x_j$ where $j\ge i$; 
 \item $|\cup_{3\le j\le k-1} R_j|\le 1$, $|R_2|\le 2$ if $k\ge 3$, and $|R_1|\le \bar d_{x_1}$ if $k\ge 2$.
 \end{enumerate}
\end{lemma}

We first complete the proof of part (b) of Theorem~\ref{thm:bipartiteLargep} assuming Lemmas~\ref{lem:Bp} and~\ref{lem:Realiser}.
Let $\eps>0$. We show that a.a.s.\ any realiser $|\R|$ must satisfy that $|\R|\ge (1-\eps)n$. Let $\R=\{L_1,\ldots, L_d\}$ be a realiser where $d=\dim P$.  By Lemma~\ref{lem:Realiser}(b,d) we may assume that if some $L\in \R$ reverses more than $(1+\eps/2)qn$ incomparable pairs in $P$ then there is a unique element $x_L\in A$ such that
\begin{itemize}
\item[(a)]  $L$ reverses at least $(1+\eps/2)qn-3$ incomparable pairs $(x_L,y)$ where $y\in B$.
\item[(b)]  $L$ reverses at most 3 other incomparable pairs.
\end{itemize}
If $L\in \R$ reverses more than $(1+\eps/2)qn$ incomparable pairs in $P$, we colour $L\in \R$ red, and colour the corresponding $x_L$ red.
 By Lemma~\ref{lem:Realiser}(d) and (a), every red element in $A$ must have degree at least $(1+\eps/2)qn-3\ge (1+\eps/3)qn$ in $\bar B$. By Property (P4) of Lemma~\ref{lem:Bp} and Lemma~\ref{lem:Realiser}(d), the total number of incomparable pairs reversed by the set of red total orderings in $\R$ is $o(qn^2)$. Hence, by Lemma~\ref{lem:Realiser}(a), a.a.s.\ the number of non-red total orderings in $\R$ must be at least
\[
\frac{(1+o(1))qn^2}{(1+\eps/2)qn}\ge (1-\eps)n,\quad \text{for all sufficiently large $n$},
\]  
which implies that $|\R|\ge  (1-\eps)n$ as desired. It only remains to prove Lemmas~\ref{lem:Bp} and~\ref{lem:Realiser}.
\smallskip

\noindent {\em Proof of Lemma~\ref{lem:Bp}.\ } (P1) and (P2) are standard results in random graph theory following routine concentration arguments, and thus we skip their proofs. Let $X$ denote the number of subgraphs isomorphic to $K_{2,3}$. Then,
\[
\ex X=2 n^{5} q^6 =o(1),\quad \text{since $q\le n^{-1+1/7}$}.
\] 
Hence, (P3) holds a.a.s. Finally, noting that $d_v$ has the binomial distribution ${\bf Bin}(n,q)$, it is immediate that for every $v\in A\cup B$ and for all sufficiently small $\eps>0$,

\begin{eqnarray*}
\ex\left( d_v\ind{d_v\ge (1+\eps)qn} \right) &\le &\sum_{j\ge (1+\eps)qn} j \binom{n}{j} q^j (1-q)^{n-j}\\
&\le & \frac{qn}{1-q}e^{-qn} \sum_{j\ge (1+\eps)qn -1}  \frac{(nq/(1-q))^j}{j!} \\
&=&O\left( qn e^{-qn} \left(\frac{nqe}{(1-q)(1+\eps)qn}\right)^{(1+\eps)qn} \right)\\
&=& qn \exp\left(-\frac{\eps^2}{2}qn+O(q+\eps^3)qn\right)=o(qn). 
\end{eqnarray*}
Therefore,
\[
\ex\left(\sum_{v\in A\cup B} d_v\ind{d_v\ge (1+\eps)qn} \right) =o(qn^2),
\]
and hence (P4) follows by the Markov inequality.\qed
\smallskip

\noindent {\em Proof of Lemma~\ref{lem:Realiser}.\ } Part (a) follows from Lemma~\ref{lem:Bp} (P2). For part (b), notice that for any $x\notin \{x_1,\ldots, x_{k-1}\}$, we must have $x<_L x_{k-1}<_L y_1\le _L y$ and therefore $(x,y)$ cannot be reversed by $L$. For (c), since all elements in $R_i$ are smaller than $x_i$ in $L$, they are smaller than $x_j$ for all $j\le i$ in $L$, which implies that no elements in $R_i$ can be comparable to any of these $x_j$. Part (d) now follows from part (c), as otherwise a copy of $K_{2,3}$ must occur in $\bar B$, which would contradict with Lemma~\ref{lem:Bp}  (P3). \qed
 
 \qed

\section{Non-bipartite POS: proof of Theorem~\ref{thm:nonbipartiteLB}}

In this section, $\G_n\sim \G(n,p)$, $P\sim P_{\G(n,p=c_n/n)}$, and $c>0$ is fixed. We use $<$ to denote the order in natural numbers, and $<_P$ to denote the order in $P$. Throughout this section, $q$ denotes $1-p$.

Suppose that $\xi,\beta$ are given so that the assumptions of the theorem are satisfied.
Partition $[n]$ into four parts $A'\cup A\cup B\cup B'$, where $A'$ contains the $\xi n$ smallest elements (in natural order), $A$ contains the next $n/2-\xi n$ smallest elements, $B$ the next $n/2-\xi n$ smallest elements and $B'$ the greatest $\xi n$ elements. 

Recall $\alpha_c$ from~(\ref{def:alpha_c}). We have the following lemma similar to Lemma~\ref{lem:Bproperties}.

\begin{lemma}\label{lem:incomparable}
 Let $\eps>0$ be fixed; $c\ge 1$ and $\alpha=(1+\eps) \alpha_c$.
\begin{enumerate}[(a)]
\item A.a.s.\ $P$ has $(1-o(1)) \xi^2 n^2$ incomparable pairs $(i,j)\in A'\times B'$;
\item A.a.s.\ for all $S\subseteq A$, $T\subseteq B$ where $|S|, |T|\ge \alpha n$, there is an edge between $S$ and $T$ in $\G(n,p)$.
\end{enumerate}
\end{lemma}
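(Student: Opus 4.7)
The plan is to mirror the structure of Lemma~\ref{lem:Bproperties} (which handled the analogous bipartite case), but with the key modification that in the non-bipartite setting, comparability $i<_P j$ requires an increasing path in $\G(n,p)$ rather than just a single edge. Part (a) therefore needs a first-moment path-counting argument, while part (b) is essentially the same union bound as before.

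For part (a), fix $i<j$ and bound $\pr(i<_P j)$ by the expected number of monotone increasing paths from $i$ to $j$ in $\G(n,p)$. A path of length $k$ has $k-1$ intermediate vertices chosen from the $j-i-1$ naturals strictly between $i$ and $j$, and each such path exists independently with probability $p^k$. Summing over $k$, I would obtain
\[
\pr(i<_P j)\le \sum_{k\ge 1}\binom{j-i-1}{k-1}p^k = p(1+p)^{j-i-1}\le p\,e^{p(j-i-1)}\le p\,e^{c}.
\]
Summing over the $\xi^2 n^2$ pairs $(i,j)\in A'\times B'$ gives expected comparable pairs at most $p\xi^2 n^2 e^c = c\xi^2 e^c n = O(n)$. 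Markov's inequality then yields that a.a.s.\ the number of comparable pairs in $A'\times B'$ is $o(n^2)$, so that a.a.s.\ the number of incomparable pairs is $(1-o(1))\xi^2 n^2$.

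For part (b), since $A$ and $B$ are disjoint subsets of $[n]$, for any fixed $S\subseteq A$, $T\subseteq B$ with $|S|=|T|=\alpha n$ the presence or absence of each of the $|S||T|$ potential edges between $S$ and $T$ is independent, so the probability of no edge is $(1-p)^{\alpha^2 n^2}\le e^{-c\alpha^2 n}$. A union bound over the at most $\binom{n}{\alpha n}^2$ choices of $(S,T)$ yields
\[
\pr(\exists S,T)\le \binom{n}{\alpha n}^2 e^{-c\alpha^2 n}\le \left(\frac{e^2}{\alpha^2}e^{-c\alpha}\right)^{\alpha n}.
\]
By the definition~(\ref{def:alpha_c}) of $\alpha_c$ and the monotonicity of $x\mapsto e^2 x^{-2}e^{-cx}$ (as used in Lemma~\ref{lem:asymptotic}), the choice $\alpha=(1+\eps)\alpha_c$ makes the parenthesized quantity strictly less than $1$, uniformly in $n$ (for $c\ge 1$ one checks $\alpha_c\le 1$ just as before), so the bound is $o(1)$.

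The only point requiring slight care is the bound on $\pr(i<_P j)$ in part (a): one must remember that intermediate vertices on the increasing path can come from anywhere in $(i,j)\cap[n]$, not just from $A'\cup B'$, which is why the factor $\binom{j-i-1}{k-1}$ appears. Everything else is routine, and the proof goes through uniformly as long as $c$ is bounded (which is assumed). I don't expect any real obstacle here; the lemma is a direct analogue of Lemma~\ref{lem:Bproperties}, and the main work was already done in setting the correct threshold $\alpha_c$.
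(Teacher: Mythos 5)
Your proof is correct. For part (b) you take essentially the same route as the paper: a union bound over at most $\binom{n}{\alpha n}^2$ pairs of sets (the paper bounds $\binom{(1/2-\xi)n}{\alpha n}^2$ by $\binom{n}{\alpha n}^2$ explicitly), and the observation that $\frac{e^2}{\alpha^2}e^{-c\alpha}<1$ follows from $\alpha>\alpha_c$ together with the monotonicity of $x\mapsto e^2x^{-2}e^{-cx}$. For part (a) you take a genuinely different and more self-contained route. The paper simply invokes Lemma~3.1(iii) of Bollob\'{a}s and Brightwell~\cite{bollobas1997dimension}, which asserts that a.a.s.\ the maximum size of the down-set or up-set of any element of $P_{\G(n,c/n)}$ is $O(\log n)$; the number of comparable pairs in $A'\times B'$ is then at most $\xi n\cdot O(\log n)=o(n^2)$, and part (a) follows. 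Your first-moment count over increasing paths gives the pointwise bound $\pr(i<_P j)\le p(1+p)^{j-i-1}\le pe^{c}$, hence expected comparable pairs $O(n)$, and Markov finishes. Both arguments work for fixed $c$; yours has the merit of avoiding the external reference, at the cost of being slightly weaker (the paper's cited lemma gives an $O(n\log n)$ a.a.s.\ bound rather than just $o(n^2)$, though that stronger bound is not needed here). One minor slip worth flagging: your parenthetical ``for $c\ge 1$ one checks $\alpha_c\le 1$ just as before'' is actually false for $c\in[1,2)$ --- Lemma~\ref{lem:asymptotic} only asserts $\alpha_c\le 1$ for $c\ge 2$, and indeed $\alpha_1>1$. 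This does not affect your proof, since the only fact you use is that the base of the exponential is strictly less than one, which follows from $\alpha>\alpha_c$ and monotonicity alone (and if $\alpha n$ exceeds $|A|$ the claim is vacuous), but the remark as written is incorrect.
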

 
 \proof By~\cite[Lemma 3.1 (iii)]{bollobas1997dimension}, a.a.s.\ the maximum size of the down-set or up-set of any element in $P$ is $O(\log n)$. This yields part (a). 
Part (b) follows by a similar proof as in Lemma~\ref{lem:Bproperties} and notice that
\[
\binom{(1/2-\xi)n}{\alpha n}^2(1-p)^{\alpha^2 n^2}\le \binom{n}{\alpha n}^2(1-p)^{\alpha^2 n^2}. \qed
\] 

The following lemma is a generalisation of~\cite[Lemma 3.1]{bollobas1997dimension} with $V=\{1\}$. The proof is almost identical to~\cite[Lemma 3.1]{bollobas1997dimension}, and we include it here for completeness.
\begin{lemma}\label{lem:upset}
Let $U,V$ be a partition of $[n]$, such that for all $(x,y)\in V\times U$, $x<y$ (in the natural order). Then,  for any $s\ge 1$, the probability that $|U_P[V]\cap U|=s$ is equal to 
\[
 q^{|V|(|U|-s)}\prod_{j=0}^{s-1}(1-q^{j+|V|})\prod_{i=1}^s \frac{1-q^{|U|-s+i}}{1-q^i}.
\]
\end{lemma}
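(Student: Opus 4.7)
My plan is to reveal the edges of $\G_n$ along the natural-order enumeration $u_1 < u_2 < \cdots < u_u$ of $U$ (writing $u=|U|$ and $v=|V|$) and track the sizes of the sets $R_j := U_P[V] \cap \{u_1,\ldots,u_j\}$ as a Markov chain whose distribution at time $u$ I can then evaluate explicitly.

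The first step is the structural observation that $u_{j+1} \in U_P[V]$ if and only if $u_{j+1}$ has at least one $\G_n$-neighbour in $V \cup R_j$. The forward direction uses $V \cup R_j \subseteq U_P[V]$ together with the fact that $\{u_1,\ldots,u_j\} \setminus R_j$ is disjoint from $U_P[V]$, so any increasing path from $V$ to $u_{j+1}$ must arrive at $u_{j+1}$ from $V \cup R_j$. Because $R_j$ is measurable with respect to edges within $V \cup \{u_1,\ldots,u_j\}$, the edges from $u_{j+1}$ to $V \cup R_j$ are fresh independent Bernoulli$(p)$ variables. Consequently $\Pr(u_{j+1} \in R_{j+1} \mid R_j) = 1 - q^{v+|R_j|}$, depending on $R_j$ only through its cardinality, so $X_j := |R_j|$ is a Markov chain with $X_0 = 0$ and transitions $\Pr(X_{j+1} = r+1 \mid X_j = r) = 1 - q^{v+r}$.

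The second step is to compute $\Pr(X_u = s)$ by summing over sample paths. A path from $0$ to $s$ in $u$ steps is encoded by nonnegative integers $f_0,\ldots,f_s$ with $\sum_k f_k = u-s$, where $f_k$ counts the steps spent at level $k$ before the next jump. The corresponding path probability is $\prod_{k=0}^{s-1}(1-q^{v+k}) \cdot \prod_{k=0}^{s} q^{(v+k)f_k}$. Factoring $q^{v(u-s)}$ out of the second product and the first product out of the sum reduces the computation to evaluating
\[
\sum_{\substack{f_0+\cdots+f_s=u-s \\ f_k \ge 0}} q^{f_1 + 2f_2 + \cdots + s f_s} = \binom{u}{s}_q = \prod_{i=1}^s \frac{1 - q^{u-s+i}}{1 - q^i},
\]
which is the standard $q$-binomial identity (the generating function for partitions fitting in an $s \times (u-s)$ box). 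Assembling the factors gives exactly the claimed expression.

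The only point requiring real care is verifying in the first step that the edges whose presence determines whether $u_{j+1}$ joins $R_{j+1}$ are independent of the randomness used to build $R_j$; this reduces to checking that $R_j$ depends only on edges within $V \cup \{u_1,\ldots,u_j\}$, which is immediate from the definition of an increasing $V$-to-$u_k$ path. I do not anticipate further difficulty, as the $q$-binomial identity is classical.
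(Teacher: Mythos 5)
Your proof is correct and takes essentially the same approach as the paper: both reveal the edges one $U$-element at a time in natural order, observe that the conditional probability of $u_{j+1}$ entering the up-set depends only on $|R_j|$, and sum over the resulting profile, reducing to the Gaussian binomial generating-function identity. Your gap variables $f_0,\ldots,f_s$ correspond exactly to the paper's $k_1,\ldots,k_{s+1}$ (the gaps between consecutive elements of $S = U_P[V]\cap U$); the Markov-chain phrasing is just a cleaner way to organise the same bookkeeping.
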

\proof Given $S\subseteq U$ where $|S|=s$, let $\E_{S}$ denote the event that $U_P[V]\cap U=S$. Suppose that $t_1<t_2<\cdots<t_{s}$ are the elements in $S$ where $<$ denotes the natural order. Let $k_i=|\{t\in U: t_{i-1}<t<t_i\}|$ for all $2\le i\le s$, and  $k_1=|\{t\in U: t<t_1\}|$, $k_{s+1}=|\{t\in U: t>t_s\}|$.
Then,  with $\kvec$ denoting $(k_1,\ldots,k_{s+1})$ and $\norm{\kvec}$ denoting $\sum_{i=1}^{s+1} k_i = |U|-s$,
\begin{align*}
\pr(|U_P[V]\cap U|=s)&=\sum_{S\subseteq U: |S|=s}\pr(\E_{S})=\sum_{\kvec} \prod_{j=0}^{s-1}(1-q^{j+|V|}) \prod_{j=1}^{s} q^{k_j(|V|+j-1)}\\ 
&=\prod_{j=0}^{s-1}(1-q^{j+|V|}) \cdot q^{(|V|-1)\norm{\kvec}}  \sum_{\kvec} \prod_{i=1}^{s} q^{ik_i}\\ 
&= \prod_{j=0}^{s-1}(1-q^{j+|V|}) \cdot q^{(|V|-1)\norm{\kvec}} \cdot [z^{|U|-s}] \prod_{i=1}^{s} \frac{1}{1-q^iz}\\
&= \prod_{j=0}^{s-1}(1-q^{j+|V|}) \cdot q^{(|V|-1)\norm{\kvec}} q^{|U|-s} \prod_{i=1}^{|U|-s} \frac{1-q^{s+i}}{1-q^i}\\
&= \prod_{j=0}^{s-1}(1-q^{j+|V|}) \cdot q^{|V| (|U|-s)} \prod_{i=1}^{s} \frac{1-q^{|U|-s+i}}{1-q^i}. \qed
\end{align*}
Let $U,V$ be as in Lemma~\ref{lem:upset}. By considering the dual of $P$ it follows immediately that 
\begin{equation}
\pr(|D_P[U]\cap V|=s)=\prod_{j=0}^{s-1}(1-q^{j+|U|}) \cdot q^{|U| (|V|-s)} \prod_{i=1}^{s} \frac{1-q^{|V|-s+i}}{1-q^i}. \label{eq:downset}
\end{equation}

\begin{lemma} \label{lem:expansion} There exists $\eps=\eps(c)>0$ such that 
a.a.s.\ the following is true for any $S'\subseteq A'$ and $T'\subseteq B'$ with $|S'|=|T'|=\beta n$:
\begin{enumerate}[(a)]
\item $|U[S']\cap A| \ge tn$; 
\item $|D[T']\cap B| \ge tn$,
\end{enumerate}
where $t=(1+\eps)\alpha_c$.
\end{lemma}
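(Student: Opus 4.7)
The plan is to fix an arbitrary $S' \subseteq A'$ with $|S'| = \beta n$, pass to the induced sub-poset $P' = P[S' \cup A]$, bound $\Pr(|U_{P'}[S'] \cap A| < tn)$ via Lemma~\ref{lem:upset}, and then union-bound over all choices of $S'$. Any path in $\G_n$ lying entirely inside $S' \cup A$ is also a path of $\G_n$, so $U_{P'}[S'] \cap A \subseteq U_P[S'] \cap A$ and the sub-poset bound transfers to $P$. Part~(b) will then follow from the order-reversing symmetry $i \mapsto n+1-i$ of $\G(n,p)$, which preserves the edge distribution, swaps $A' \leftrightarrow B'$ and $A \leftrightarrow B$, and interchanges up-sets with down-sets.

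For the main computation, I would apply Lemma~\ref{lem:upset} to $P'$ with $V = S'$ and $U = A$, using that $S'$ consists of the smallest elements of $S' \cup A$ in natural order; this gives an explicit product formula for $\Pr(|U_{P'}[S'] \cap A| = s)$. Setting $|S'| = \beta n$, $|A| = (1/2 - \xi)n$, $s = t' n$, $q = 1 - c/n$, I would take logs and approximate each of the three sums $\sum \log(1 - q^{\cdots})$ by a Riemann integral of the form $n \int_0^{t'} \log(1 - e^{-c(x + a)})\,dx$. The substitution $y = e^{-c(x+a)}$ converts each integral into a difference of dilogarithms via $\int y^{-1}\log(1-y)\,dy = -Li_2(y) + C$. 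Combining these with the linear contribution $-c\beta(1/2-\xi-t')n$ coming from $q^{|S'|(|A|-s)}$ gives
\[
\log \Pr\bigl( |U_{P'}[S'] \cap A| = t' n \bigr) = n \bigl[ f(c,\xi,\beta,t') - \beta \log(e\xi/\beta) \bigr] + o(n),
\]
with $f$ as defined in~(\ref{def:f}).

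Summing over $s < tn$ (a polynomial loss) and union-bounding over the $\binom{\xi n}{\beta n} \le \exp\!\bigl(n \beta \log(e\xi/\beta)\bigr)$ choices of $S'$, the entropy factor exactly cancels the $-\beta\log(e\xi/\beta)$ correction above, leaving
\[
\Pr\bigl( \exists\, S' \subseteq A',\ |S'| = \beta n,\ |U_P[S'] \cap A| < t n \bigr) \le \exp\!\Bigl( n \sup_{t' \le t} f(c,\xi,\beta,t') + o(n) \Bigr).
\]
The hypothesis $\sup_{t' \le \alpha_c} f(c,\xi,\beta,t') < 0$, combined with continuity of $f$ in its last argument, yields the same strict inequality on $[0,(1+\eps)\alpha_c]$ for some $\eps = \eps(c) > 0$; taking $t = (1+\eps)\alpha_c$ then forces the right-hand side to be $o(1)$.

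The main technical obstacle will be making the Riemann-sum-to-integral approximation rigorous and uniform in $t' \in [0, t]$. With $c$ fixed, the integrand $\log(1 - e^{-c(x + a)})$ has derivative bounded uniformly (away from the singular endpoint), so each sum differs from its integral by $O(\log n)$ and this error is absorbed into the $o(n)$ in the exponent. This is precisely the step that fails when $c = c_n \to \infty$, matching Remark~(b) following Theorem~\ref{thm:nonbipartiteLB}.
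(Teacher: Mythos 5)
Your proposal is correct and follows essentially the same approach as the paper: apply Lemma~\ref{lem:upset} with $V=S'$, $U=A$ to get the exact product formula, take logarithms, approximate by Riemann integrals, convert to dilogarithms via the substitution $y=e^{-c(x+a)}$, absorb the $\binom{\xi n}{\beta n}$ entropy into the $\beta\log(e\xi/\beta)$ term of $f$, and invoke continuity to push the strict negativity of $\sup f$ from $[0,\alpha_c]$ to $[0,(1+\eps)\alpha_c]$, with part (b) by order-reversing duality. You are in fact a bit more careful than the paper in two spots: you explicitly note the inclusion $U_{P'}[S']\cap A\subseteq U_P[S']\cap A$ to transfer the sub-poset bound to $P$ (the paper writes ``sub-poset induced by $A'\cup A$'' but really means $S'\cup A$ and leaves the inclusion implicit), and you explicitly sum over $s<tn$ rather than only considering equality, which is the correct reading of the union bound.
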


\proof We only need to prove part (a), as part (b) follows from (a) by consider the dual of $P$.  By Lemma~\ref{lem:upset} (to the sub-poset induced by $A'\cup A$), 
\[
\pr(|U[S']\cap A|=tn)= q^{\beta n (|A|-tn)} \prod_{j=0}^{tn-1}(1-q^{j+\beta n}) \prod_{i=1}^{tn} \frac{1-q^{|A|-tn+i}}{1-q^i}.
\]
First, we express the right hand side above into form
\[
\exp((1+o(1))ng(c,\xi,t,\beta))\quad \text{for some function $g$}.
\]
Notice that $|A|=(1/2-\xi)n$. Thus,
\begin{align*}
q^{\beta n (|A|-tn)} &= \left(1-\frac{c}{n}\right)^{\beta n (|A|-tn)} = \exp\left(-n \left(c\beta (1/2-\xi-t)  +O\left( \frac{c^2(1/2-\xi-t)}{n}\right) \right) \right).
\end{align*}
Next, we estimate $\prod_{i=1}^{tn} \frac{1}{1-q^i} $.
\begin{align*}
\prod_{i=1}^{tn} \frac{1}{1-q^i} &= \exp\left(-\sum_{i=1}^{tn} \log(1-(1-c/n)^i) \right)\\
&=\exp\left(-\sum_{i=1}^{tn} \log(1-e^{ci/n+O(c^2 i/n^2)}) \right)\\
&=\exp\left(-n (1+o_n(1)) \int_{0}^{t} \log(1-e^{-cx}) dx \right),
\end{align*}
where $o_n(1)$ is a term vanishing to 0 as $n\to\infty$. With a change of variable $y=e^{-cx}$ we obtain 
\begin{align*}
\int_{0}^{t} \log(1-e^{cx}) dx &= \frac{1}{c} \int_{e^{-ct}}^1 y^{-1}\log(1-y) dy\\
&=-\frac{1}{c}(Li_2(1)-Li_2(e^{-ct})),
\end{align*}
where $Li_2(z):=-\int_{0}^z y\log(1-y) dy$ is the dilogarithm function, which has the following expansion for all $z\in \mathbb{C}$ where $|z|\le 1$:
\begin{equation}
Li_2(z)=\sum_{k=1}^{\infty} \frac{z^k}{k^2}. \label{eq:expansion}
\end{equation} 
Hence,
\begin{align}
n^{-1}\log\left(\prod_{i=1}^{tn} \frac{1}{1-q^i}\right) =(1+o_n(1))\frac{1}{c}\left(Li_2(1)-Li_2(e^{-ct}) \right).\label{eq1}
\end{align}
Similarly, we obtain that
\begin{align}
n^{-1}\log\left(\prod_{j=0}^{tn-1}(1-q^{j+\beta n})\right)&=(1+o_n(1)) \int_{0}^{t} \log(1-e^{-c\beta-cx}) dx \nonumber\\
&=(1+o_n(1)) \int_{e^{-c\beta-ct}}^{e^{-c\beta}} \frac{1}{c} y^{-1} \log(1-y) dy  \nonumber\\
&= (1+o_n(1))(Li_2(e^{-c\beta-ct})-Li_2(e^{-c\beta})), \label{eq2}
\end{align}
and
\begin{align}
n^{-1}\log\left( \prod_{i=1}^{tn} (1-q^{|A|-tn+i})\right)=(1+o_n(1))(Li_2(e^{-c(1/2-\xi-t)-ct})-Li_2(e^{-c(1/2-\xi-t)})).\label{eq3}
\end{align}
Finally, the number of ways to choose $S'$ in $A'$ is 
\begin{equation}
\binom{\xi n}{\beta n} \le\left(\frac{e\xi}{\beta}\right)^{\beta n}=\exp\left(n\cdot \beta \log(e\xi/\beta)\right).\label{eq4}
\end{equation}
Let $\B$ denote the event that there exists $S'\subseteq A'$ where $|S'|=\beta n$ and $|U[S']\cap A|=tn$. Combining~\eqn{eq1}--\eqn{eq4} and recalling $f$ in~\eqn{def:f} we 
obtain that
\begin{align*}
n^{-1}\log\,\pr(\B) &= (1+o_n(1)) f(c,\xi,\beta,t).
\end{align*}
Since $\sup_{t\le \alpha_c}f(c,\xi,\beta,t)<0$, by continuity of $f$ there exists $\eps=\eps(c)>0$ such that $$\sup_{t\le (1+\eps)\alpha_c}f(c,\xi,\beta,t)<0,$$ which implies that $\pr(\B)=o(1)$. This completes the proof of Lemma~\ref{lem:expansion}.
\qed \smallskip

\noindent{\em Proof of Theorem~\ref{thm:nonbipartiteLB}.\ } Part (a) follows by a similar argument as in the lower bound proof of Theorem~\ref{thm:mainbipartite}.
By Lemma~\ref{lem:incomparable}(a), the number of incomparable pairs $(i,j)\in A'\times B'$ is a.a.s.\ asymptotic to $\xi^2 n^2$.
By Lemma~\ref{lem:incomparable}(b) and Lemma~\ref{lem:expansion}, a.a.s.\ every linear extension of $P$ can reverse at most $2\beta\xi n^2$ pairs of incomparable pairs of elements in $A'\times B'$. Hence, the size of any realiser must be at least
\[
\frac{\xi^2 n^2}{2\beta\xi n^2} = \frac{\xi}{2\beta}.
\]

For part (b),  set
\[
\beta=\exp(-c/6+c/\log c),\quad \xi=1/\log c.
\]  
It follows that 
\begin{equation}
c\beta\to 0, \quad 1/2-\xi-t\ge 1/4\ \text{for all $t\le 3\log c/c$}, \quad\text{as $c\to \infty$.} \label{eq:large-c}
\end{equation}
Using the expansion~\eqn{eq:expansion}, we obtain that
\[
Li_2(1)- Li_2(e^{-ct}+Li_2(e^{-c\beta-ct})-Li_2(e^{-c\beta}) + Li_2(e^{-c(1/2-\xi-t)-ct})-Li_2(e^{-c(1/2-\xi-t)}) )=\sum_{k=1}^{\infty} \frac{\rho_k}{k^2},
\]
where, by~\eqn{eq:large-c}, for all $k\ge 1$,
\begin{align*}
\rho_k&=(1-e^{-c\beta k})(1+e^{-ctk}) - e^{-c(1/2-\xi-t)k}(1-e^{-ctk})\le (1+e^{-ctk})(1-e^{-c\beta k}) \le 2 c\beta k.
\end{align*}
Thus,
\begin{align}
\sum_{k=1}^{\infty} \frac{\rho_k}{k^2} &\le 2 \left(\sum_{k=1}^{1/c\beta} \frac{c\beta k}{k^2} + \sum_{k=1/c\beta}^{\infty}\frac{1}{k^2}\right) =2 \Big( c\beta ( \log(1/c\beta)+O(1)) +O(c\beta)\Big)\nonumber\\
&=2 c\beta \Big( \log(1/c\beta) +O(1)\Big). \label{eq:dilogarithmApprox}
\end{align}
We show that for all sufficiently large $c$, 
$$\sup_{t\le 3\log c/c}f(c,\xi,t,\beta)<0.
$$
By~\eqn{eq:dilogarithmApprox}, $f(c,\xi,t,\beta)=g(c,\xi,t,\beta)+O(\beta)$ where 
$$g(c,\xi,t,\beta)=\beta\log(e\xi/\beta)-c\beta(1/2-\xi-t) + 2\beta \log(1/c\beta),
$$ which is an increasing function of $t$. 
Since
\[
g(c,\xi,3\log c/c,\beta)= \beta \left(-\frac{2c}{\log c} +O(\log c)\right),
\]
it follows that for all sufficiently large $c$,
\[
\sup_{t\le 3\log c/c} f(c,\xi,t,\beta)<0.
\]
By Lemma~\ref{lem:asymptotic}, $\alpha_c<3\log c/c$ for all sufficiently large $c$.  Thus, $\xi+\alpha_c<1/2$ and $\sup_{t\le \alpha_c}f(c,\xi,t,\beta)<0$ are both satisfied. By part (a), a.a.s.\ $\dim P_{\G(n,p)}\ge \frac{1}{2\log c} \exp(c/6-c/\log c) \ge \exp(\gamma c) $, where $\gamma=\frac{1}{6}-1/\log c - \log(2\log c)/c$. This completes the proof for part (b). \qed


\section{Non-bipartite POS: proof of Theorem~\ref{thm:mainnonbipartite}}



Throughout this section, $p=c_n/n$ where $1\le c_n\ll \log n$ and $q=1-p$.
Set 
\begin{equation}
\alpha=Kc_ne^{c_n},\quad S=\{x: |D[x]\cup U[x]|\ge 10Kc_n\cdot\alpha\}.\label{def:S}
\end{equation}
 In light of Theorem~\ref{thm:deterministicNonBipartitie}, it suffices to bound the dimension of the sub-poset induced by $D[S]$ (the bound on $\dim P[U[S]]$ follows by symmetry), as established in the following key lemma.
\begin{lemma}\label{lem:nonbipartiteKey}
A.a.s.\ every component of the cover graph of $P[D[S]]$ is either a tree or unicyclic. 
\end{lemma}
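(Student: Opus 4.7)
The plan is to follow the template of the proof of Lemma~\ref{lem:keybipartite}: I will show that a.a.s.\ no connected bicyclic (in the sense used in the proof of Lemma~\ref{lem:Properties}) subgraph of $\G(n,p)$ has all of its vertices inside $D[S]$, from which the lemma follows since then every component of the cover graph of $P[D[S]]$ is a tree or unicyclic. The first step is to verify that the cover graph of $P[D[S]]$ is a subgraph of $\G(n,p)$: because $D[S]$ is a down-set of $P$, any element lying between two comparable elements of $D[S]$ is itself in $D[S]$, so cover relations of $P[D[S]]$ are cover relations of $P$ (and the latter are edges of $\G(n,p)$).

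The key structural characterisation is that $v\in D[S]$ iff $U[v]\cap S\neq\emptyset$; equivalently, there is an increasing path $\pi_v$ in $\G(n,p)$ from $v$ to some $s_v\in S$ (with $\pi_v=\{v\}$ if $v\in S$). For a hypothetical bicyclic $H$ on $i$ vertices with $V(H)\subseteq D[S]$, I would attach such a shortest path to each $v\in V(H)$ and form the witness graph $H^+=H\cup\bigcup_v\pi_v$. Since $H^+$ is connected and contains the bicyclic $H$, $|E(H^+)|\ge |V(H^+)|+1$, and hence $\pr(H^+\subseteq\G)\le p^{V+1}$ where $V:=|V(H^+)|$. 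The argument then reduces to the union bound
\[
\sum_{i\ge 2}\sum_{V\ge i}\binom{n}{V}\cdot V^{O(V)}\cdot p^{V+1}\cdot \rho_K^{\,i},
\]
where $V^{O(V)}$ counts the ways to form $H^+$ (a bicyclic graph on $V$ labelled vertices together with a choice of $i$ ``root'' vertices and how tree-arms hang off them) and $\rho_K:=\pr(s\in S)$. Substituting $p=c_n/n$ and $\binom{n}{V}\le(en/V)^V$ yields a bound of the form $O(c_n/n)\sum_{i,V}(Cc_n V)^V\rho_K^i$, which is $o(1)$ once $K$ is large enough.

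The main obstacle will be justifying the quantitative large-deviation estimate
\[
\rho_K=\pr\bigl(|D[s]\cup U[s]|\ge 10K^2c_n^2 e^{c_n}\bigr)\le e^{-\Omega(K^2 c_n)},
\]
which comes from comparing $|D[s]\cup U[s]|$ with the total progeny of a Galton--Watson branching process of offspring mean $\Theta(c_n)$; this is precisely the flavour of bound that Corollary~\ref{cor:tools} supplies. Two secondary technical points need care: overlaps between the $\pi_v$ for distinct $v$, which are absorbed into the shape-count term $V^{O(V)}$, possibly after trimming so that $|E(H^+)|=V+1$; and the fact that conditioning on $H^+\subseteq\G$ can increase each $|D[s_v]\cup U[s_v]|$ by at most $O(V)$, an amount that is negligible compared to the threshold $10K^2c_n^2 e^{c_n}$ for large $K$, so that the unconditional estimate on $\rho_K$ may be used inside the first-moment sum.
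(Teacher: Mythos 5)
Your high-level plan---use Observation~\ref{obs:subgraph} to pass to a subgraph of $\G(n,p)$, form a witness $H^+$ (a hypothetical bicyclic $H$ together with one increasing path from each $v\in V(H)$ to some $s_v\in S$), and run a first-moment bound---is the right starting point, but the step where you extract the factor $\rho_K^{\,i}$ is exactly where the real work of the lemma lies, and your sketch does not close it. That factor is needed because $\sum_V\binom{n}{V}V^{O(V)}p^{V+1}$ on its own diverges (the expected number of connected bicyclic subgraphs of $\G(n,c/n)$ with $c>1$ is not small). Two separate obstructions block the $\rho_K^{\,i}$ claim. First, the roots $s_v$ need not be distinct: all $i$ vertices of $H$ could share a single common ancestor $s\in S$, in which case you can extract at most one factor of $\rho_K$. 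Second, even when the $s_v$ are distinct, the events $\{s_v\in S\}$ are increasing in the edge set of $\G(n,p)$ and therefore positively correlated (FKG): $\pr(\bigcap_v\{s_v\in S\})\ge\prod_v\pr(s_v\in S)$, which goes the \emph{wrong} way for an upper bound and can be off by an arbitrary factor when the down-/up-sets of the $s_v$ overlap substantially. Your remark that conditioning on $H^+\subseteq\G(n,p)$ shifts each $|D[s_v]\cup U[s_v]|$ by only $O(V)$ concerns a single $v$ in isolation and does nothing about either of these two joint phenomena.

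The paper's proof spends most of its length manufacturing precisely the distinctness and approximate independence that you take for granted. It passes to the contracted auxiliary digraph $W(M)$ and extracts $\Theta(|V(W(M))|)$ pairwise well-separated trilinks; it then takes $M$ to be a bicyclic subgraph minimizing the number of maximal elements, and argues (Lemma~\ref{lem:MTP}, properties (P3),(P4)) that this minimality forces the external paths $P_y$ to be vertex disjoint and the sets $D_P(s_y),U_P(s_y)$ to be pairwise disjoint across trilinks, since otherwise one could re-route to a bicyclic subgraph with strictly fewer maximal elements. Only with this disjointness in hand can the events be exposed one at a time and their conditional probabilities bounded (Claim~\ref{claim:largeDownset}), yielding the decoupled gain $(2n^{-1}e^{-Kc_n/2})^N$. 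The proof also handles a dichotomy your sketch omits: some trilink apices carry long arms inside $M$ rather than short arms plus a large down-/up-set outside $M$ (the ``heavy'' arcs used to derive~\eqn{eq:sum2}), and the final bound trades off these two sources of smallness. These three ingredients --- the re-routing/minimality argument, the well-separated trilinks, and the light/heavy dichotomy --- are all absent from your proposal and appear to be unavoidable for the union bound to converge.
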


\noindent{\em Proof of Theorem~\ref{thm:mainnonbipartite}.\ } 
By considering the dual of $P$ and Lemma~\ref{lem:nonbipartiteKey}, a.a.s.\ every component of the cover graph of $P[U[S]]$ is either a tree or unicyclic.
By Theorems~\ref{thm:Trotter} and~\ref{thm:Brightwell}, a.a.s.\ $\dim P[D[S]], \dim P[U[S]]\le 4$. Then, Theorem~\ref{thm:deterministicNonBipartitie} implies that a.a.s.\ $\dim P\le 2(8+\dim P[U\cup D]+\dim P[[n]\setminus S])$, where $U=U[S]\setminus S$ and $D=D[S]\setminus S$. By the definition of $S$, in both $P[U\cup D]$ and  $P[[n]\setminus S]$, every element  is comparable to at most $10Kc_n\alpha=O(c_n^2e^{c_n})$ other elements. Thus, by Theorem~\ref{thm:Kahn}, $\dim P[U\cup D], \dim P[[n]\setminus S]=O(c_n^4e^{c_n})$. \qed  \smallskip

It only remains to prove Lemma~\ref{lem:nonbipartiteKey}.
 Unlike $P_{\B(n,n,p)}$ where the comparable graph and the cover graph are both $\B(n,n,p)$, the cover graph of $P_{\G(n,p)}$ is typically a proper subgraph of $\G(n,p)$, while the comparable graph of $P_{\G(n,p)}$ is a super graph of $\G(n,p)$ with a much greater density. To study $P_{\G(n,p)}$, it is much more convenient to consider the oriented version of $\G(n,p)$ where every edge $ij$ in $\G(n,p)$ is oriented from $i$ to $j$ if $i<j$. Since every graph on $[n]$ has a unique orientation we keep the same notation $\G(n,p)$ for its oriented version. Note that two elements $i$ and $j$ in $P_{\G(n,p)}$ are comparable if and only if there is a directed path joining $i$ and $j$ in $\G(n,p)$.
 
The key idea in the proof of upper-bounding $\dim P_{\B(n,n,p)}$ is that $S_A$ is much smaller than $A$, and thus, it is not likely to join vertices in $S_A$ by vertices in $B$ into a connected bicyclic subgraph. Situations in $P_{\G(n,p)}$ are quite different, as elements with large down-sets or up-sets form clusters. Suppose $v$ is an element with a large down-set. Then all the elements $u\in U_P(v)$  have a large down-set. Therefore, we expect many large cliques in the comparable graph of $P_{\G(n,p)}$, and they intersect in non-trivial ways. However, this does not mean that the cover graph of $P_{\G(n,p)}$ induced by the elements with large down-sets have complicated graph structures. Indeed, Lemma~\ref{lem:nonbipartiteKey} shows that every component of the cover graph has at most one cycle. Unlike the study of $\dim P_{\B(n,n,p)}$, analysing the cover graph of  $P[D[S]]$ for $P\sim P_{\G(n,p)}$ necessitates the identification and careful examination of suitable chosen structures. 


Let $G_D[S]$ denote the cover graph of $P[D[S]]$. A helpful observation is the following.
\begin{obs}\label{obs:subgraph}
Let $V$ denote the set of elements in $P[D[S]]$.  Then, $G_D[S]\subseteq \G(V,p)$.
\end{obs}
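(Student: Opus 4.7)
The plan is to exploit two structural facts: that $D[S]$ is downward-closed in $P$, and that every cover edge of a random graph order must be realised by a direct edge in the underlying oriented $\G(n,p)$. With these in hand, the observation follows by a short direct argument, so I expect no genuine obstacle.

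First I would verify that $V = D[S] = \bigcup_{s\in S} D[s]$ is a down-set of $P$. If $u \leq_P v$ and $v \in V$, then picking $s \in S$ with $v \leq_P s$ and using transitivity gives $u \leq_P s$, hence $u \in V$. This is the only structural fact about $V$ that the argument uses.

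Next I would take an arbitrary edge $xy$ of the cover graph $G_D[S]$, so $x,y \in V$, $x <_P y$, and no element of $V$ lies strictly between $x$ and $y$ in the induced poset $P[D[S]]$. Using the oriented version of $\G(n,p)$ (every edge oriented from the smaller to the larger endpoint in natural order), the relation $x <_P y$ is witnessed by a directed path $x = u_0 < u_1 < \cdots < u_k = y$ in $\G(n,p)$, as per the definition of $P_{\G(n,p)}$. Every intermediate vertex satisfies $u_i \leq_P y \in V$, and so by the down-set property, each $u_i$ also lies in $V$.

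If $k \geq 2$, then $u_1 \in V$ and $x <_P u_1 <_P y$, contradicting that $xy$ is a cover in $P[D[S]]$. Hence $k = 1$, which means $xy$ is itself an edge of $\G(n,p)$; since both endpoints lie in $V$, this edge belongs to $\G(V,p)$, and the observation follows. As noted, the only non-trivial content is the downward-closure of $D[S]$, which is immediate from transitivity, so there is no substantial obstacle here — the value of the observation lies in letting subsequent arguments work entirely inside the sparse random graph $\G(V,p)$ rather than with the much denser comparability graph of $P$.
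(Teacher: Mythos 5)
Your proof is correct and uses essentially the same argument as the paper's: both show that any internal vertex on a directed path witnessing $x <_P y$ must lie in $V = D[S]$, which contradicts the cover property unless the path is a single edge. You frame this step as the explicit observation that $D[S]$ is downward-closed, while the paper reaches the same conclusion by concatenating the path with a directed path from $y$ up to some $s \in S$; these are the same fact, and your phrasing is arguably the cleaner of the two.
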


{\em Proof of Observation~\ref{obs:subgraph}.\ } Suppose on the contrary that there exists an arc $(x,y)$ in $G_D[S]\setminus \G(V,p)$. It implies that there is a directed $x,y$-path $P_{xy}$ in $\G(n,p)$ such that $P_{xy}$ has length at least two, and none of the internal vertices is in $V$.  On the other hand, there is a directed path $P_{yz}$ from $y$ to some $z\in S$ since $y$ is in the down-set of $S$. By considering the directed path obtained from concatenating $P_{xy}$ and $P_{yz}$, it follows that all the internal vertices on $P_{xy}$ must be in the down-set of $z$, and thus must be in $V$, leading to a contradiciton. \qed\smallskip

It is easy to see that the cover graph of $P[D[S]\cup U[S]]$ does not have the property in Observation~\ref{obs:subgraph}, which is one of the main reasons that we complete the proof of Theorem~\ref{thm:mainnonbipartite} by studying the dimension of $P[D[S]]$ and $P[U[S]]$, instead of studying $\dim P[D[S]\cup U[S]]$.

 Suppose that $G_D[S]$ has a component with at least two cycles. Then, $G_D[S]$ has a connected bicyclic subgraph $M$. Let $P_M$ denote the partial order with cover graph $M$  (notice that $P_M$ is not necessarily an induced sub-order of $P$).
 Let $\overline{M}$ denote the set of maximal elements and $\underline{M}$ denote the set of minimal elements in $P_M$. That is, for all $u\in \overline{M}$, and for all $z\in M$, either $z<_P u$ or $z||u$.  Colour the vertices in $M$ red if their degrees are greater than two; notice that there can be at most two red vertices since $M$ is bicyclic. For all the vertices that are not in $\overline{M}\cup\underline{M}$ that were not coloured red, colour them blue; notice that all blue vertices must have degree two in $M$. See an example of $M$ on the left of Figure~\ref{f:example1}. In this figure, elements higher in position are greater in the natural order. There are eight elements in $\overline{M}$, seven elements in $\underline{M}$; two red elements, none of them are in $\overline{M}\cup\underline{M}$ in this example.
 \begin{figure}[h]
 \begin{center}
 \includegraphics[scale=0.6]{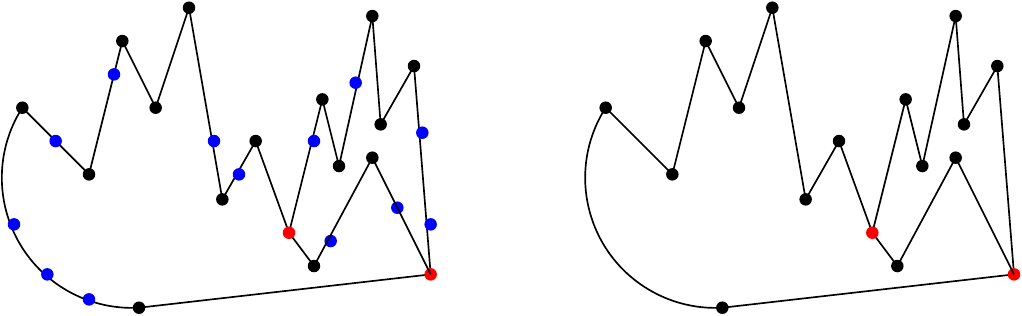}
 \end{center}
 \caption{$M$ and $W(M)$}
 \label{f:example1}
 \end{figure}
 
 Next, we define an auxiliary arc-weighted (multi)digraph $W(M)$ associated to $M$. 
The vertices of $W(M)$ is the union of $\overline{M}\cup \underline{M}$ and the red vertices in $M$. There is an arc $(x,y)$ in $W(M)$  if there is a directed path from $x$ to $y$ in $M$ whose internal vertices are all blue, and the weight of $(x,y)$ is equal to the length of that directed path. If there are two directed paths (with blue internal vertices) in $M$ from $x$ to $y$, then we keep both copies of $(x,y)$ in $W(M)$ representing these two directed paths. Thus, $W(M)$ may have one double arc.  The figure on the right of Figure~\ref{f:example1} is $W(M)$ for $M$ given on the left. The direction of the arcs are omitted in the figure, as obviously all the arcs go from the lower element to the higher. Obviously by construction $W(M)$ is connected and bicyclic. Moreover, all but at most two vertices in $W(M)$ are in $\overline{M}\cup \underline{M}$, and the vertices in $W(M)$ that are not  in $\overline{M}\cup \underline{M}$ must be red.  

\begin{obs}\label{ob:bipartitelike}
\begin{enumerate}[(a)]
\item Every arc $(x,y)$ in $W(M)$ joins some $x\in {\underline M}$ and some $y\in \overline{M}$, unless $x$ or $y$ is red.
\item There is a directed path $P_x$ in $G_D[S]$ from $x$ to some vertex in $S$, for every $x\in \overline{M}$.
\end{enumerate}
\end{obs}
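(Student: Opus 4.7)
The plan is to derive both parts directly from the definitions of $W(M)$, $P_M$, and $G_D[S]$, with no real computation involved.

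For part (a), I would start from the partition of $V(W(M))$ into $\overline{M}\cup\underline{M}$ together with the red vertices of $M$; by construction of $W(M)$, these account for all of its vertices. Any arc $(x,y)$ of $W(M)$ arises from a directed path of positive length in $M$ from $x$ to $y$, so in particular $x <_{P_M} y$. Consequently $x$ cannot be maximal in $P_M$, hence $x\notin\overline{M}$, and symmetrically $y$ cannot be minimal, so $y\notin\underline{M}$. Therefore, if $x$ is not red it must lie in $\underline{M}$, and if $y$ is not red it must lie in $\overline{M}$, which is exactly the claim.

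For part (b), the idea is to lift the relation $x \le_P s$ for some $s \in S$ to a saturated chain that lives entirely inside $D[S]$. Since $x \in \overline{M} \subseteq V(M) \subseteq D[S]$, by definition of $D[S]$ there exists $s \in S$ with $x \le_P s$. I would then fix any saturated chain $x = v_0 <_P v_1 <_P \cdots <_P v_k = s$ in $P$, where $v_{i+1}$ covers $v_i$. Because $v_i \le_P v_k = s \in S$, every $v_i$ lies in $D[S]$. Moreover, any covering relation of $P$ between two elements of $D[S]$ is automatically a covering relation of the induced poset $P[D[S]]$, since an interpolating element in $P[D[S]]$ would also interpolate in $P$. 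Concatenating the resulting arcs in $G_D[S]$ then produces the desired directed path $P_x$ from $x$ to $s\in S$.

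Both parts essentially amount to unpacking the definitions; I do not anticipate any substantial obstacle. The one small subtlety, namely that covering relations of $P$ pass to $P[D[S]]$ when both endpoints already lie in $D[S]$, is handled by the short contradiction argument above, and the observation is arranged so as to plug directly into the subsequent analysis of $W(M)$.
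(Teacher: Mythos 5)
Your proof is correct and fleshes out precisely what the paper leaves as a one-line remark. For (a), the key point is exactly what you use: an arc $(x,y)$ of $W(M)$ comes from a nontrivial directed path in the cover graph $M$, so $x<_{P_M}y$, hence $x$ is not maximal and $y$ is not minimal in $P_M$; since every non-red vertex of $W(M)$ lies in $\overline{M}\cup\underline{M}$, the conclusion follows. For (b), your saturated-chain argument inside $D[S]$, together with the observation that a cover relation of $P$ between two elements of $D[S]$ remains a cover relation in $P[D[S]]$, is the standard way to produce the required directed path in $G_D[S]$; this is exactly the content of the paper's remark that $M$ lies in the down-set of $S$.
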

The first observation above follows from the definition of $W(M)$, and the second observation follows from the fact that  $M$ is contained in the down-set of $S$.

To simplify the analysis, we hope to identify a set of directed paths as in Observation~\ref{ob:bipartitelike}(b) that ``behave somewhat independently''. 
We say the triple $(x,y,z)$ in $W(M)$ is a {\em \trilink} if 
\begin{itemize}
\item none of $x,y,z$ or their neighbours in $W(M)$ are red; and
\item $(x,y)$ and $(z,y)$ are arcs in $W(M)$.
\end{itemize}
The figures in Figure~\ref{f:example2} below are the example of $(M,W(M))$ given previously. We marked two {\trilink}s in $W(M)$, each contained in a green dotted curve.  
Note from the definition that if $(x,y,z)$ is a \trilink\ then necessarily $x,z\in \underline{M}$ and $y\in\overline{M}$.
 \begin{figure}[h]
 \begin{center}
 \includegraphics[scale=0.6]{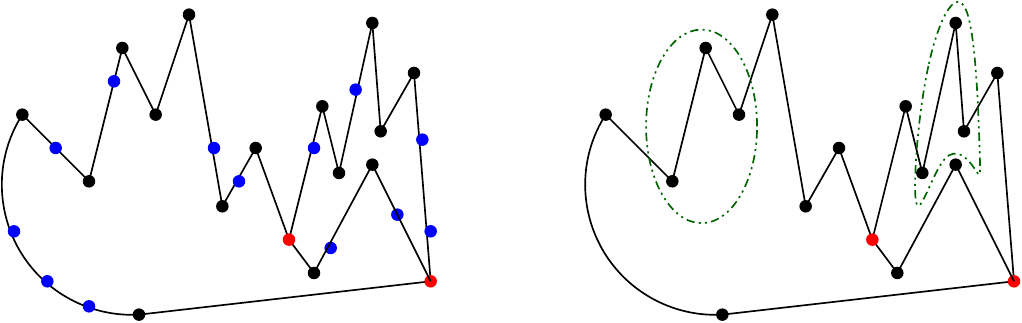}
 \end{center}
 \caption{{\trilink}s}
 \label{f:example2}
 \end{figure}

We say that two {\trilink}s $(x_1,y_1,z_1)$ and $(x_2,y_2,z_2)$ are {\em well-separated} if the graph distance in $W(M)$  between $\{x_1,y_1,z_1\}$ and $\{x_2,y_2,z_2\}$ is at least four. In the example we gave in Figure~\ref{f:example2}, the graph distance between the two {\trilink}s is six, and so they are well-separated.


\begin{lemma}\label{lem:MTP}
Suppose that $G_D[S]$ has a component $C$ with at least two cycles. Then, $G_D[S]$ has a connected bicyclic subgraph $M$, together with a collection ${\mathcal T}$ of pairwise well-separated {\trilink}s in $W(M)$ and a collection ${\mathcal P}$ of directed paths that satisfy the following properties:
\begin{itemize}
\item[(P1)] $|{\mathcal T}|\ge t/21-2$, where $t$ is the number of edges in $W(M)$;
\item[(P2)] For each \trilink\ $(x,y,z)\in {\mathcal T}$ there is a directed path $P_y\in {\mathcal P}$ in $G_D(S)$ from $y$ to some $s_y\in S$ and vice versa;
\item[(P3)] All the directed paths in ${\mathcal P}$ are vertex disjoint;
\item[(P4)] $D_P(s_y)$, $U_P(s_y)$, $D_P(s_{y'})$, and $U_P(s_{y'})$ are pairwise disjoint for every pair of distinct {\trilink}s $(x,y,z),(x',y',z')\in {\mathcal T}$. 
\end{itemize}
\end{lemma}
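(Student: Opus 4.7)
My plan is to take $M$ to be the 2-core of the bicyclic component $C$, obtained by iteratively removing vertices of degree one until the minimum degree is at least 2. Since $C$ has at least two cycles, $M$ is a nonempty connected bicyclic subgraph with every vertex of degree at least 2. Consequently, every vertex of $W(M)$ has degree at least 2; since $M$ has at most two vertices of degree exceeding 2 (the red ones, each of degree 3 or 4), every non-red vertex of $W(M)$ has degree exactly 2. Writing $t = |E(W(M))|$, the bicyclic structure gives $|V(W(M))| = t - 1$.

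I then identify candidate trilink centers, defined as non-red $y \in \overline{M}$ of in-degree 2 in $W(M)$ lying at graph-distance at least 3 from every red vertex. By Observation~\ref{ob:bipartitelike}(a), every arc of $W(M)$ with no red endpoint goes from $\underline{M}$ to $\overline{M}$, and at most 8 arcs touch any red endpoint. Thus at least $t - 8$ arcs have head in a non-red $\overline{M}$ vertex; since each such vertex has in-degree at most 2, there are at least $(t-8)/2$ non-red $\overline{M}$ vertices of in-degree 2, of which all but $O(1)$ lie at distance $\geq 3$ from the red vertices. A greedy selection follows: pick a candidate $y$, form the trilink $(x, y, z)$ where $x, z$ are the two in-neighbors of $y$, and exclude all candidates within $W(M)$-distance $\leq 5$ of $y$. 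Since non-red vertices have degree $\leq 2$, a radius-$5$ ball contains at most $1 + 2 \cdot 5 = 11$ vertices, so each selection excludes at most 11 candidates. This yields a collection $\mathcal{T}_0$ with $|\mathcal{T}_0| \geq t/21 - 2$ after a careful accounting of the boundary terms. Centers at pairwise $W(M)$-distance $\geq 6$ automatically yield trilinks whose vertex sets are at pairwise distance $\geq 4$, giving the well-separation required.

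For each trilink $(x, y, z) \in \mathcal{T}_0$, since $y \in \overline{M} \subseteq D[S]$, there is a chain from $y$ to some $s_y \in S$ in the sub-poset $P[D[S]]$, which yields a directed path $P_y$ in $G_D[S]$. I establish (P3) and (P4) by a second pass that retains only those trilinks whose associated $P_y$ is vertex-disjoint from all previously retained paths and whose $D_P(s_y), U_P(s_y)$ are disjoint from the up/down-sets of previously retained targets. The well-separation of trilinks in $W(M)$ ensures that the initial portions of the paths $P_y$, the parts lying inside or near $M$, are mutually disjoint, so the retention step has only to handle potential conflicts occurring far from $M$.

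The main obstacle is bounding the loss in $|\mathcal{T}|$ from enforcing (P3) and (P4) so that it is absorbed into the additive constant $-2$ and does not erode the linear $t/21$ rate. The sets $D_P(s_y), U_P(s_y)$ are defined in the full poset $P$ and can a priori be quite large, while the paths $P_y$ may wander through regions of $G_D[S]$ far outside $M$. I expect to handle this by choosing $P_y$ to be a shortest $y$-to-$S$ path in $G_D[S]$, and possibly reassigning $s_y$ among candidates in $S$ when conflicts arise, exploiting the fact that, by well-separation in $W(M)$ together with the local structure of $P[D[S]]$, each trilink center has sufficiently many independent options so that all conflicts can be broken without thinning $\mathcal{T}_0$ by more than a bounded additive amount.
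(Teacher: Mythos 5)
Your choice of $M$ as the 2-core of $C$ is the wrong extremal object, and this is where the proposal breaks down: it gives you no leverage on (P3) and (P4). The paper instead chooses a connected bicyclic subgraph $M$ of $G_D[S]$ that \emph{minimizes the number of maximal elements}. This minimality is exactly what forces (P3) and (P4) to hold: if two paths $P_{y_1}, P_{y_2}$ intersected at a vertex $w$, or if $D_P[s_{y_1}]$ and $D_P[s_{y_2}]$ shared an element $w$, then by rerouting --- replacing the $M$-portion joining $y_1$ to $y_2$ with the pair of directed paths into $w$ (or $s_{y_1}$, $s_{y_2}$) --- one obtains a new connected bicyclic subgraph of $G_D[S]$ with strictly fewer maximal elements, a contradiction. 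You correctly identify that your ``second pass'' retention step is the obstacle, but your proposed fix (shortest paths plus reassigning $s_y$) is not a proof: there is no a priori reason why the discards should be only $O(1)$ rather than a constant fraction of $\mathcal{T}_0$. The down-sets $D_P(s_y)$ live in the full poset, can be of size $\Theta(\log n)$, and may all funnel through common vertices of the underlying random graph; nothing in the 2-core structure or in well-separation within $W(M)$ prevents a linear number of conflicts among the $s_y$'s.

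Your argument for (P1) is essentially fine in spirit (greedy packing of well-separated trilinks along the paths that survive deleting red vertices and their neighbourhoods), though the bookkeeping should be done as in the paper --- working with the vertex-induced subgraph of $W(M)$ obtained by deleting the red vertices and their neighbours, which is a disjoint union of at most three paths, one of length at least $(t-12)/3$, giving $\lfloor (t-12)/21\rfloor \ge t/21 - 2$ trilinks --- rather than via the in-degree count you sketch, which would need more care about boundary vertices and the exact constants. But (P1) was never the hard part; without the extremal choice of $M$, the proof of (P3) and (P4) does not go through.
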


\proof Choose $M$ such that the total number of maximal elements is minimized. Let ${\mathcal T}$ be a maximum collection of pairwise well-separated {\trilink}s in $W(M)$, and for each \trilink\ $(x,y,z)\in {\mathcal T}$, let $P_y$ be an arbitrary directed path from $y$ to some $s_y\in S$, whose existence is guaranteed by Observation~\ref{ob:bipartitelike}(b). Let ${\mathcal P}$ be the collection of such directed paths. We demonstrate that our choice of $(M,{\mathcal T}, {\mathcal P})$ satisfies (P1)--(P4).
 
 (P1): it holds trivially when $t\le 42$ by taking ${\mathcal T}=\emptyset$. Suppose that $t\ge 43$. Let $M'$ be the vertex-induced subgraph of $W(M)$ obtained by deleting the red vertices in $W(M)$ and their neighbours. Since there were either two  red vertices each of degree three, or a single red vertex with degree 4 in $W(M)$, it is easy to see that $M'$ has at least $t-12$ edges. Moreover, $M'$ is a collection of at most three disjoint paths (not necessarily directed paths), one of which --- call it $P$ --- must have length at least $(t-12)/3$. Since every edge in $M'$ joins a vertex in $\overline{M}$ and a vertex in $\underline{M}$, there must exists at least $\lfloor((t-12)/3)/7\rfloor \ge t/21-2$ well-separated {\trilink}s by taking a maximum collection of pairwise well-separated {\trilink}s on $P$. 

 (P2) holds trivially by the definition of ${\mathcal P}$.

(P3) and (P4) hold due to the minimality of the number of maximal elements in the choice of $M$. It is straightforward to check that if (P3) or (P4) were violated then one can find a connected bicyclic subgraph with fewer maximal elements than those in $M$. Although several cases must be considered, each can be checked with ease. We verify two representative cases below and omit the remaining ones, whose verification is  straightforward. 

Suppose $(x_1,y_1,z_2),(x_2,y_2,z_2)\in {\mathcal T}$ are on the same path $P$ in $M'$, and $P_{y_1}$ and $P_{y_2}$ intersect at some vertex $w$. Let $P'$ denote the path joining $y_1$ and $y_2$ in $M$ who corresponds to the sub-path of $P$ joining $y_1$ and $y_2$ in $M'$. Let $P_1$ and $P_2$ be the sub-paths of $P_{y_1}$ and $P_{y_2}$ joining $y_1$ and $w$, and joining $y_2$ and $w$ respectively. Then, $M-P'+P_1+P_2$ is a connected bicyclic subgraph of $G_D[S]$ that has fewer maximal elements than $M$.

As a second case, suppose $(x_1,y_1,z_2),(x_2,y_2,z_2)\in {\mathcal T}$ are on the same path $P$ in $M'$ and $P_{y_1}$ and $P_{y_2}$ are vertex-disjoint directed paths from $y_1$ to $s_{y_1}$, and from $y_2$ to $s_{y_2}$. Suppose on the contrary that there exists $w\in D[s_{y_1}]\cap D[s_{y_2}]$. Then there is a directed $w,s_{y_1}$-path $P_1$, and a directed $w,s_{y_2}$-path $P_2$. Let $P'$ be defined as in the previous case. Suppose that $w$ is not in $M$. Then, $M-P'+P_1+P_2$ is a connected bicyclic subgraph of $G_D[S]$ that has fewer maximal elements than $M$. The case that $w$ is in $M$, as well as the case where $(x_1,y_1,z_2),(x_2,y_2,z_2)\in {\mathcal T}$ are not on the same path in $M'$ can be checked in a similar manner.
\qed




We will show that structures $(M, {\mathcal T}, {\mathcal P})$ as described in Lemma~\ref{lem:MTP} are unlikely to appear as subgraphs of $G_D[S]$. We need a few probabilistic tools as follows.
\begin{lemma}\label{lem:tools}
Let $U\subseteq [n]$ and let $G_{\bar U}$ be the subgraph of $\G(n,p)$ induced by $[n]\setminus U$. Let $P_{\bar U}$ be the poset with cover graph $G_{\bar U}$.
\begin{enumerate}[(a)]
\item Let $u\in [n]\setminus U$. Then, $\pr(|D_{P_{\bar U}}(u)|\ge k), \pr(|U_{P_{\bar U}}(u)|\ge k)\le (1-q^n)^k$;
\item Let $u,v\in [n]\setminus U$. Let ${\mathcal P}_{k}$ be the collection of tuples $(u, x_1,x_2,\ldots,x_{k-1},v)$ where $u<x_1<x_2<\ldots<x_{k-1}<v$, and $x_1,\ldots,x_{k-1}\in [n]\setminus U$. For each $P=(u, x_1,x_2,\ldots,x_{k-1},v)\in {\mathcal P}_t$, let ${\mathcal E}_P$ denote the probability that $P$ is a directed path in $\G(n,p)$. Then, $\sum_{P\in {\mathcal P}_{k}} \pr({\mathcal E}_P) \le kc_n^k/nk!$.
\end{enumerate}
\end{lemma}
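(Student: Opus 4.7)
My plan is to prove the two parts separately, using a sequential exploration argument for (a) and a direct counting argument for (b).

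For part (a), I focus on $U_{P_{\bar U}}(u)$; the down-set bound follows by considering the dual poset. Enumerate the elements of $U_{P_{\bar U}}(u)$ in natural order as $v_1 < v_2 < \cdots$. The key structural observation is that for each $i \ge 1$, the monotone directed path in $G_{\bar U}$ from $u$ to $v_i$ has its penultimate vertex lying in $\{u, v_1, \ldots, v_{i-1}\}$, because that vertex is in $\{u\} \cup U(u)$ and is strictly less than $v_i$. Hence $v_i$ must be joined by at least one edge of $\G(n,p)$ to the set $\{u, v_1, \ldots, v_{i-1}\}$; this event involves only edges incident to $v_i$ and has probability at most $1 - q^i \le 1 - q^n$. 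For distinct indices $i$, the relevant edge sets are disjoint (incident to different vertices $v_i$), giving independence when we reveal the $v_i$'s sequentially. Taking the product of the $k$ bounds yields $\pr(|U_{P_{\bar U}}(u)| \ge k) \le (1-q^n)^k$.

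For part (b), a direct count suffices. The number of tuples in $\mathcal{P}_k$ is at most $\binom{n-2}{k-1} \le n^{k-1}/(k-1)!$, obtained by selecting $k-1$ intermediate vertices from $[n]\setminus U$ between $u$ and $v$. Each event $\mathcal{E}_P$ requires $k$ specific edges of $\G(n,p)$ to be present, so $\pr(\mathcal{E}_P) = p^k = c_n^k / n^k$. Summing,
\[
\sum_{P \in \mathcal{P}_k} \pr(\mathcal{E}_P) \le \frac{n^{k-1}}{(k-1)!} \cdot \frac{c_n^k}{n^k} = \frac{k c_n^k}{n \cdot k!},
\]
as required.

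The main technical obstacle lies in part (a): the sequence $v_1, v_2, \ldots$ is itself random rather than a priori fixed, so one must set up a coupling in which the edges incident to each newly discovered $v_i$ are exposed step by step and disjointly from edges exposed in previous steps. Only then does the edge-disjoint independence give a genuine product of at most $(1-q^n)$ per step, as opposed to a union-bound over $k$-tuples of candidate vertices, which would introduce an unwanted combinatorial factor. The careful design of the exploration process is what preserves the clean product form of the bound.
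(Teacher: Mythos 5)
Part (b) of your proposal is correct and identical in substance to the paper's proof (bound the number of intermediate-vertex choices by $\binom{v-u-1}{k-1}\le kn^{k-1}/k!$ and multiply by $p^k$).

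Part (a), however, has a genuine gap at the final step. Your structural observation is right: processing the candidates $w_1<w_2<\cdots<w_m$ above $u$ in increasing order and letting $X_j$ indicate $w_j\in U_{P_{\bar U}}(u)$, one finds that $w_j$ succeeds iff it is adjacent (via an edge incident to $w_j$ from below) to the current set $\{u\}\cup\{w_i:i<j,\,X_i=1\}$, and the relevant edge sets are disjoint across $j$, so
\[
\pr(X_j=1\mid X_1,\ldots,X_{j-1})=1-q^{\,1+\sum_{i<j}X_i}\le 1-q^n.
\]
But the step ``taking the product of the $k$ bounds yields $\pr(\sum_j X_j\ge k)\le(1-q^n)^k$'' does not follow from this conditional bound alone. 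If it did, the same argument applied to $m$ i.i.d.\ Bernoulli$(1-q^n)$ trials would give $\pr(\mathrm{Bin}(m,1-q^n)\ge k)\le(1-q^n)^k$, which is false for large $m$: the tail probability tends to $1$ while $(1-q^n)^k<1$. The per-step conditional probability controls the chance that a \emph{particular fixed candidate} succeeds, not the chance that \emph{at least one more} success occurs, and the latter is what drives $\pr(\ge k)$; conditioning on which $k$ elements succeed and summing over the choices brings back exactly the combinatorial factor $\binom{m}{k}$ you were trying to avoid.

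The bound is nonetheless true, but the proof must use more than ``each conditional probability is $\le 1-q^n$''; it needs the special structure $\pr(X_j=1\mid\text{history})=1-q^{s+1}$ (depending only on the success count $s$, with the failure probability decaying geometrically in $s$). From the exact distribution in Lemma~\ref{lem:upset} with $|V|=1$ one gets $\pr(|U_{P_{\bar U}}(u)|=s)=q^{m-s}\prod_{i=1}^s(1-q^{m-s+i})$, and a telescoping sum then yields the clean identity
\[
\pr\bigl(|U_{P_{\bar U}}(u)|\ge k\bigr)=\prod_{i=m-k+1}^{m}\bigl(1-q^i\bigr)\le(1-q^n)^k,
\]
since every factor has exponent at most $m\le n$. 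This is in fact the route the paper takes: it reads the bound off the exact distribution (equation~\eqref{eq:downset} / Lemma~\ref{lem:upset}), alternatively citing Lemma~3.1(iii) of Bollob\'as--Brightwell. Your exploration is the correct way to \emph{derive} that exact distribution, but you need to actually carry it through to the product formula rather than shortcut via the uniform per-step bound.
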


{\em Proof of Lemma~\ref{lem:tools}.\ } For (a), obviously, $\pr(|D_{P_{\bar U}}(u)|\ge k)\le \pr(|D_{P}(u)|\ge k)$, and we can obtain the bound in part (a) easily from~\eqn{eq:downset}. However, it follows even more straightforwardly from~\cite[Lemma 3.1 (iii)]{bollobas1997dimension}.

For (b), 
there are $\binom{v-u-1}{k-1}\le kn^{k-1}/k!$ ways to choose $ x_1,x_2,\ldots,x_{k-1}$, and thus 
\[
\sum_{P\in {\mathcal P}_{k}} \pr({\mathcal E}_P) \le\frac{kn^{k-1}}{k!}(c_n/n)^k = \frac{k}{n}\frac{c_n^k}{k!}.\qed
\]

\begin{cor}\label{cor:tools} Let $U, {\mathcal P}_k$ and ${\mathcal E}_P$ be defined as in Lemma~\ref{lem:tools}. Then,
\begin{enumerate}[(a)]
\item For every $u\in[n]$, $\pr(|D_P[u]\cup U_P[u]|\ge \alpha)\le (1+o(1))\exp(-Kc_n)$.
\item $\sum_{k\ge 1} \sum_{P\in {\mathcal P}_k} \pr({\mathcal E}_P)\le c_ne^{c_n}n^{-1}$ and $\sum_{k\ge 2Kc_n} \sum_{P\in {\mathcal P}_k} \pr({\mathcal E}_P)\le n^{-1}\exp(-Kc_n)$.
\end{enumerate}
\end{cor}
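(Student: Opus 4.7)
The plan is to derive both parts of Corollary~\ref{cor:tools} by directly summing the estimates in Lemma~\ref{lem:tools}, together with elementary asymptotics for $q^n$ and a Poisson-tail argument.

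For part~(a), I would apply Lemma~\ref{lem:tools}(a) with $U=\emptyset$ so that $P_{\bar U}=P$, giving $\pr(|D_P(u)|\ge \alpha), \pr(|U_P(u)|\ge \alpha)\le (1-q^n)^\alpha$. The inequality $1-x\le e^{-x}$ at $x=q^n$ yields $(1-q^n)^\alpha\le \exp(-\alpha q^n)$, and a standard expansion of $(1-c_n/n)^n$ gives $q^n = e^{-c_n}(1+o(1))$ under the hypothesis $c_n\ll \log n$, so $\alpha q^n = Kc_n e^{c_n}\cdot e^{-c_n}(1+o(1)) = Kc_n(1+o(1))$ and therefore $(1-q^n)^\alpha \le (1+o(1))\exp(-Kc_n)$. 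For the union event, I would use $|D_P[u]\cup U_P[u]| = |D_P(u)|+|U_P(u)|+1$ to reduce to the two one-sided bounds via union bound; the factor of $1/2$ this forces in the exponent can be absorbed by a suitable enlargement of the constant $K$ appearing in~\eqn{def:S}.

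For part~(b), summing Lemma~\ref{lem:tools}(b) over $k\ge 1$ gives
\[
\sum_{k\ge 1}\sum_{P\in \mathcal{P}_k}\pr(\mathcal E_P) \le \frac{1}{n}\sum_{k\ge 1}\frac{c_n^k}{(k-1)!} = \frac{c_n}{n}\sum_{j\ge 0}\frac{c_n^j}{j!} = \frac{c_n e^{c_n}}{n},
\]
which is the first estimate. For the tail starting at $k = 2Kc_n$, the ratio of consecutive summands $c_n^k/(k-1)!$ equals $c_n/k \le 1/(2K)$, so the tail is geometrically dominated by its leading term; Stirling's formula bounds that leading term by roughly $c_n(e/(2K))^{2Kc_n-1}$, which is at most $\exp(-Kc_n)$ as soon as $2\log(2K/e)\ge 1$, i.e.\ for $K$ exceeding an absolute constant --- a free assumption since $K$ is at our disposal.

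The only real subtlety is constant bookkeeping in part~(a), where splitting the union event halves the exponent and so forces $K$ in~\eqn{def:S} to be chosen large enough to accommodate this loss along with the Stirling threshold required in part~(b); the probabilistic content is entirely contained in Lemma~\ref{lem:tools}, and no additional structural input is needed.
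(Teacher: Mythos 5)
Your proposal is correct and takes the same route as the paper, which proves the Corollary by a one-line pointer: Part~(a) from Lemma~\ref{lem:tools}(a) with $U=\emptyset$, Part~(b) by summing Lemma~\ref{lem:tools}(b) over $k\ge 1$ and over the tail $k\ge 2Kc_n$; your version correctly fills in the asymptotic $q^n=(1+o(1))e^{-c_n}$ (valid since $c_n=o(\log n)$ gives $c_n^3/n=o(1)$) and the Stirling/geometric tail bound. One remark on your reading of Part~(a): the second stated event, ``$|U_P[u]\cup U_P[u]|\ge\alpha$'', is literally just $|U_P[u]|\ge\alpha$, which strongly indicates that the ``$\cup U_P[u]$'' in both events is a typographical artefact and the intended assertion is the pair of one-sided bounds $\pr(|D_P[u]|\ge\alpha),\pr(|U_P[u]|\ge\alpha)\le(1+o(1))e^{-Kc_n}$; this agrees with the only downstream invocation of (a) in the paper, namely the estimate of $\pr(\E_1)$ inside the proof of Claim~\ref{claim:largeDownset}, which uses the one-sided down-set probability. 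Under that reading your union-bound reduction and the accompanying adjustment of $K$ are unnecessary; and note that if the union event really were intended, ``enlarging $K$ in~\eqn{def:S}'' would not by itself repair the literal statement, because the same $K$ appears both inside $\alpha=Kc_ne^{c_n}$ and in the exponent $e^{-Kc_n}$ --- what actually makes your workaround viable is that the downstream applications only need bounds of the form $e^{-\Omega(c_n)}$ with an adjustable implicit constant.
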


\proof Part (a) follows immediately from Lemma~\ref{lem:tools}(a) (with $U=\emptyset$). Part (b) follows from Lemma~\ref{lem:tools}(b) by summing over all $k\ge 1$ and over $k\ge 2Kc_n$. \qed\smallskip

\noindent{\em Proof of Lemma~\ref{lem:nonbipartiteKey}.\ } Suppose $G_D[S]$ has a connected bicyclic subgraph $M$ and a collection ${\mathcal P}$ of dipaths satisfying all the properties in Lemma~\ref{lem:MTP}. By the claim below, we may assume that $W(M)$ has at least 100 vertices.
\begin{claim} \label{claim:smallM}
Let $B>0$ be a fixed integer. A.a.s.\ there are no connected bicyclic subgraph $M$ in $G_D[S]$ such that $W(M)$ has at most $B$ vertices.
\end{claim}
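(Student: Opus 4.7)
The plan is a first-moment (union bound) argument: bound the expected number of connected bicyclic subgraphs $M\subseteq G_D[S]$ with $|V(W(M))|\le B$ and show it is $o(1)$.

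First I would enumerate the isomorphism types (``shapes'') of $W(M)$. Since $W(M)$ is a connected bicyclic multigraph on at most $B$ vertices and $B$ is fixed, there are only finitely many shapes $W$, independent of $n$. For each shape with $b\le B$ vertices and $b+1$ edges, $M$ is a subdivision of $W$ obtained by replacing each edge by a directed path of length $\ell_i\ge 1$. Using Observation~\ref{obs:subgraph} (every edge of $M$ is an edge of $\G(n,p)$) together with the fact that the internal vertices of each blue path occur in strictly increasing natural order (yielding a $1/(\ell_i-1)!$ saving), the expected number of such subdivisions in $\G(n,p)$ is at most
\[
\sum_{\ell_1,\ldots,\ell_{b+1}\ge 1}n^{b}\prod_{i=1}^{b+1}\frac{n^{\ell_i-1}}{(\ell_i-1)!}\, p^{\ell_i} \;=\; n^{b}\prod_{i=1}^{b+1}\frac{c_ne^{c_n}}{n} \;=\; \frac{(c_n e^{c_n})^{b+1}}{n}\;\le\;\frac{(c_n e^{c_n})^{B+1}}{n}.
\]
This bound alone is too weak; the $D[S]$ constraint must be used.

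Next I would pick any maximal $y_0\in\overline{M}$. Since $M\subseteq G_D[S]$ we have $y_0\in D[S]$, so there is a directed path $P$ from $y_0$ to some $s\in S$ in $\G(n,p)$. I would bound
\[
\pr(y_0\in D[S])\;\le\;\sum_{s}\sum_{k\ge 0}\sum_{P:\,y_0\to s,\,|P|=k}\pr({\mathcal E}_P,\,s\in S)
\]
by decoupling. For short paths ($k\le\tfrac{5}{2}Kc_n\alpha$), the event $|U_P[s]|\ge 5Kc_n\alpha$ depends only on edges going out of $s$, which are edge-disjoint from $P$ (which terminates at $s$); it is therefore independent of ${\mathcal E}_P$ and, by Lemma~\ref{lem:tools}(a), has probability at most $(1-q^n)^{5Kc_n\alpha}\le\exp(-5K^2c_n^2)$. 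The event $|D_P[s]|\ge 5Kc_n\alpha$ is controlled by applying Lemma~\ref{lem:tools}(a) to the sub-poset on $[n]\setminus (V(P)\setminus\{s\})$ (whose randomness is independent of ${\mathcal E}_P$), losing at most $k$ in the threshold, which also gives $\exp(-\Omega(K^2c_n^2))$. Summing via Lemma~\ref{lem:tools}(b) gives a short-path contribution of $O(c_n e^{c_n-\Omega(K^2c_n^2)})$. For long paths ($k>\tfrac{5}{2}Kc_n\alpha\gg 2Kc_n$), Corollary~\ref{cor:tools}(b) directly bounds the total contribution by $e^{-Kc_n}$. Combining, $\pr(y_0\in D[S])=O(e^{-Kc_n})$ for $K$ large.

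Putting these together (and absorbing the small correlation with ``edges of $M$ exist'' by letting $P$ avoid the $O(1)$ other junction vertices of $M$), the expected number of bad $M$'s is at most
\[
O\!\left(\frac{(c_n e^{c_n})^{B+1}}{n}\right)\cdot O(e^{-Kc_n}) \;=\; O\!\left(\frac{c_n^{B+1}e^{(B+1-K)c_n}}{n}\right) \;=\; o(1)
\]
for $K>B+1$. The main obstacle is the positive correlation between the monotone events ``$M\subseteq\G(n,p)$'', ``${\mathcal E}_P$'', and ``$s\in S$'': marginal probabilities cannot simply be multiplied, and the sub-poset trick from Lemma~\ref{lem:tools}(a) supplies the necessary conditional independence. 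A secondary technical point is that blue paths in $M$ can be arbitrarily long, but the natural-order factorial savings in Lemma~\ref{lem:tools}(b) convert the otherwise divergent sum $\sum c_n^\ell$ into a convergent $c_n e^{c_n}$, which is what makes the estimate close.
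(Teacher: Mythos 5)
Your opening first-moment computation is already a complete proof, and it is precisely the proof the paper gives. You correctly arrive at the bound
\[
n^{b}\prod_{i=1}^{b+1}\frac{c_ne^{c_n}}{n}=\frac{(c_ne^{c_n})^{b+1}}{n}\le \frac{(c_ne^{c_n})^{B+1}}{n},
\]
but then assert that ``this bound alone is too weak,'' which is where you go astray. The standing hypothesis throughout the section (and in Theorem~\ref{thm:mainnonbipartite}) is $c_n=o(\log n)$, which forces $e^{c_n}=n^{o(1)}$, hence $(c_ne^{c_n})^{B+1}=n^{o(1)}$ for fixed $B$, and therefore $(c_ne^{c_n})^{B+1}/n=n^{-1+o(1)}=o(1)$. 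The plain union bound closes the argument without ever invoking the $D[S]$ constraint, which is exactly how the paper proceeds: choose the $O(n^{B})$ junction vertices, choose one of $O(1)$ bicyclic shapes on them, and sum over the $B+1$ subdividing directed paths using Corollary~\ref{cor:tools}(b).

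The entire second half of your proposal --- decoupling the event $y_0\in D[S]$ via a path to some $s\in S$ and a case split on path lengths --- is therefore superfluous for this claim. That additional machinery (controlling down-sets of elements of $S$ reached from maximal vertices of $M$) is indeed needed, but only when $W(M)$ is large, where it appears in the paper via Claim~\ref{claim:largeDownset} and the well-separated \trilink\ bookkeeping; for bounded $W(M)$ the $n^{-1}$ in the path count already wins. Your decoupling also relies on informal handling of genuine dependencies (the path $P$ can reuse edges of $M$; the events $\{M\subseteq\G(n,p)\}$, ${\mathcal E}_P$, and $\{s\in S\}$ are positively correlated monotone events), which you wave away in a parenthetical. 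Dropping the detour avoids these difficulties entirely.
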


We first bound the number of ways to construct $W(M)$, and then $M$.  We define a family ${\mathcal F}$ of connected directed bicyclic graphs (who can possibly contain a double edge) that can be candidates for $W(M)$ as follows. The vertices are marked $+$, or $-$, or coloured red, so that they respond to the vertices in $\overline{M}$, $\underline{M}$, and the red vertices in $W(M)$.
\begin{itemize}
\item The vertices whose degree are greater than two are coloured red;
\item Each vertex can be marked $+$ or $-$, or unmarked;
\item Every vertex except for the red vertices must be marked;
\item Every arc $(x,y)$ joins some $x$ marked $-$ and some $y$ marked $+$, unless $x$ or $y$ is red.
\end{itemize}

Next, we bound the number of graphs in ${\mathcal F}$ on a specified set of vertices.

\begin{claim}\label{claim:count}
Let $\Sigma_1,\Sigma_2$ be disjoint sets. Let $R$ be a set (not necessarily disjoint from $\Sigma_1\cup\Sigma_2$) of cardinality between one and two,  and let $a=|\Sigma_1\cup\Sigma_2\cup R|$.
The number of graphs in ${\mathcal F}$ on $\Sigma_1\cup\Sigma_2\cup R$ such that all vertices in $\Sigma_1$ are marked $+$, all vertices in $\Sigma_2$ are marked $-$, and all vertices in $R$ are coloured red is at most $O(a^{4a})$. 
\end{claim}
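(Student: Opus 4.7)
The plan is a crude direct enumeration: the vertex set $\Sigma_1 \cup \Sigma_2 \cup R$ and the markings/colouring of its elements are all prescribed by the hypothesis, so I only need to count arc sets (plus a bounded number of additional choices at the red vertices).

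First I would observe that every $H \in \mathcal{F}$ is connected and bicyclic on $a$ vertices, so by the bicyclic degree condition the sum of degrees is $2a+2$, giving exactly $a+1$ arcs (counted with multiplicity; at most one pair may be a double arc, arising from the degree-four case). I would then encode the arc set of $H$ as an ordered $(a+1)$-tuple of arcs from $(\Sigma_1 \cup \Sigma_2 \cup R)^2$. The number of such tuples is at most $(a^2)^{a+1} = a^{2(a+1)}$, and this overcounts the number of arc multisets by at most $(a+1)!$, which only improves the bound. The consistency constraints that define $\mathcal{F}$ --- the bicyclic structure, and the rule that each arc $(x,y)$ joins some $x$ marked $-$ to some $y$ marked $+$ unless $x$ or $y$ is red --- only restrict which tuples are allowed, so they can only decrease the count.

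Second I would handle the residual freedom on the red vertices. Each vertex in $R$ is coloured red and, by the definition of $\mathcal{F}$, may additionally be unmarked or given a $\pm$ mark; since $|R| \le 2$, this contributes a factor of at most $3^{|R|} \le 9$. There is also a constant-factor choice between the two flavours of bicyclic graph (two degree-three red vertices versus one degree-four red vertex), but these are both absorbed into an $O(1)$ factor. Multiplying gives
\[
|\{H \in \mathcal{F} : \text{markings as prescribed}\}| \;\le\; O(a^{2(a+1)}) \;=\; O(a^{4a}),
\]
where the last estimate is valid for $a \ge 2$ (and the small-$a$ cases are immediate).

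The only potentially delicate point is bookkeeping around the double arc and the arc orientations: both are automatically absorbed into the $(a^2)^{a+1}$ count because I am enumerating ordered pairs with repetition allowed. I do not expect any real obstacle here, since the target $O(a^{4a})$ is enormously loose --- essentially any method (e.g.\ Cayley's formula $a^{a-2}$ for a spanning tree plus $\binom{a}{2}^2$ choices for two extra edges plus $2^{a+1}$ orientations) yields the same conclusion. The crude counting approach is chosen only because it is the cleanest to write.
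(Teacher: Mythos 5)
Your proof is correct, and it takes a slightly different route from the paper's. The paper argues by bounding degrees: in any $H\in\mathcal{F}$ every vertex has degree at most four, so specifying the (at most four) neighbours of each of the $a$ vertices gives at most $a^{4a}$ graphs. You instead count arcs directly: a connected bicyclic graph on $a$ vertices has exactly $a+1$ edges, so enumerating an ordered $(a+1)$-tuple of directed arcs from $(\Sigma_1\cup\Sigma_2\cup R)^2$ gives at most $(a^2)^{a+1}=a^{2a+2}$, which is in fact a sharper bound than $a^{4a}$ (for $a\ge 1$) though the claim only asks for $O(a^{4a})$. Both arguments exploit the rigidity of the bicyclic structure, just via different invariants (maximum degree versus edge count), and both correctly absorb the possible double arc and orientations. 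One small quibble: your factor of $3^{|R|}$ for marking the red vertices is superfluous --- the claim already prescribes the markings via membership in $\Sigma_1$, $\Sigma_2$, and $R$, so only the arc set is being counted --- but since it only inflates the bound by a constant, it does no harm.
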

Given $a,h,\ell$, there are $\binom{n}{h,\ell}$ ways to choose disjoint subsets $\Sigma_1$ and $\Sigma_2$ in $[n]$ such that  $|\Sigma_1|=h$, $|\Sigma_2|=\ell$ and at most $n^2$ ways to choose a set  $R$ of one or two red vertices so that the total number of vertices in $\Sigma_1\cup\Sigma_2\cup R$ is equal to $a$. By claim~\ref{claim:count}, given these choices, there are at most $O(a^{4a})$ ways to form a graph $H$ in ${\mathcal F}$ on these chosen vertices. Next, for each such $H$, we bound the probability that $G_D(S)$ has a connected bicyclic subgraph $M$ such that $W(M)$ is $H$. Let $t$ denote the number of arcs in $H$.
 Notice that $h+\ell\le a\le h+\ell+2$ and $t=a+1$ since $H$ is connected and bicyclic. Let $e_1,\ldots, e_t$ be an enumeration of all the arcs in $H$.  
 \begin{claim} \label{claim:M}
 Let $\Sigma_1,\Sigma_2,R$ and $H$ on $\Sigma_1\cup\Sigma_2\cup R$ be given.  
Let $\wvec=(w_j)_{j=1}^t$ be positive integers. Let ${\mathcal M}$ denote the set of connected bicyclic graphs on $[n]$ such that for each $M\in {\mathcal M}$, $W(M)=H$, and
\begin{itemize}
\item 
for each pair $(x,y)$, if $(x,y)$ is an arc in $H$ then there is a directed path $P_{xy}$ from $x$ to $y$ in $M$;
\item all the directed paths $P_{xy}$ mentioned above are internally disjoint;
\item the length of $P_{xy}$ is equal to $w_j$ if $(x,y)=e_j$.
\end{itemize}
Let ${\mathcal E}_M$ denote the probability that $M$ is a subgraph of $\G(n,p)$. 
Then, $\sum_{M\in {\mathcal M}}\pr({\mathcal E}_M)\le \prod_{j=1}^t \frac{w_j}{n} c_n^{w_j}/w_j!$.
\end{claim}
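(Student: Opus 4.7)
The plan is to realise each $M \in {\mathcal M}$ as a union of internally disjoint directed paths---one path of length $w_j$ for each arc $e_j = (x_j, y_j)$ of $H$---and to bound $\sum_{M\in{\mathcal M}}\pr({\mathcal E}_M)$ by counting internal-vertex choices for each path separately. This is a direct generalisation of the counting step in Lemma~\ref{lem:tools}(b).

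First I would establish the following structural identity. Each $M \in {\mathcal M}$ decomposes uniquely as $M = \bigcup_{j=1}^t P_j$, where $P_j$ is the directed path of length $w_j$ from $x_j$ to $y_j$ associated to $e_j$. Because the $P_j$ are internally disjoint, and because the only way two distinct arcs $e_j,e_{j'}$ can share both endpoints is if $H$ contains a double arc (in which case at least one of $P_j,P_{j'}$ must have an internal vertex, else $M$ would have a repeated edge and be excluded), the $P_j$ are pairwise edge-disjoint. Therefore $|E(M)| = \sum_j w_j$, and
\[
\pr({\mathcal E}_M) \;=\; p^{\sum_{j=1}^t w_j}
\]
uniformly over $M \in {\mathcal M}$---so no FKG-type factorisation needs to be invoked.

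Next I would count $|{\mathcal M}|$. Because $\G(n,p)$ is oriented by the natural order, each $P_j$ is determined by its $w_j-1$ internal vertices, which must form an increasing sequence strictly between $x_j$ and $y_j$. There are $\binom{y_j-x_j-1}{w_j-1}\le n^{w_j-1}/(w_j-1)!$ such choices for $P_j$, and dropping the cross-path internal-disjointness constraint can only increase the count, giving
\[
|{\mathcal M}| \;\le\; \prod_{j=1}^t \binom{y_j-x_j-1}{w_j-1} \;\le\; \prod_{j=1}^t \frac{n^{w_j-1}}{(w_j-1)!}.
\]

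Combining the two bounds with $p = c_n/n$ and using $1/(w_j-1)! = w_j/w_j!$ yields
\[
\sum_{M \in {\mathcal M}} \pr({\mathcal E}_M) \;\le\; \prod_{j=1}^t \frac{n^{w_j-1}}{(w_j-1)!}\left(\frac{c_n}{n}\right)^{w_j} \;=\; \prod_{j=1}^t \frac{w_j}{n}\cdot\frac{c_n^{w_j}}{w_j!},
\]
as required. I do not anticipate a serious obstacle: the only point meriting care is the edge-disjointness of the $P_j$, which is what lets $\pr({\mathcal E}_M)$ factor cleanly as $p^{\sum w_j}$ and thereby reduce the whole sum to a product of single-path counts.
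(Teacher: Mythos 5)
Your argument is correct and follows essentially the same route as the paper's: decompose $M$ into internally (hence edge-) disjoint paths, factor the probability as $p^{\sum w_j}$, and bound the number of choices of internal vertices for each path by a binomial coefficient. The paper phrases the middle step as $\pr(\cap_j {\mathcal E}_{P_j}) \le \prod_j \pr({\mathcal E}_{P_j})$ and then invokes its Lemma~\ref{lem:tools}(b) term by term, but that inequality only holds (with equality) precisely because of the edge-disjointness you make explicit, so the two proofs are the same computation packaged slightly differently.
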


Let $M \in {\mathcal M}$, and suppose that $W(M)$ has $a\ge 100$ vertices. Let $\wvec$ be the resulting edge weights of $W(M)$. Let ${\mathcal T}$ be a collection of $k:=\lceil a/21-2\rceil$ well-separated {\trilink}s, whose existence is guaranteed by Lemma~\ref{lem:MTP}. There are at most $2^{a}$ ways to specify ${\mathcal T}$. For each {\trilink} $(x,y,z)\in {\mathcal T}$, we specify some $s_y\in [n]$ (it is possible that $s_y=y$). There are $n^{k}$ ways to choose these vertices $s_y$ for each {\trilink} so that they are pairwise distinct. 
Take an arbitrary enumeration of the {\trilink}s $(x_i,y_i,z_i)_{1\le i\le k}$. We expose the downsets $D_P(s_{y_i})$, $U_P(s_{y_i})$ in increasing order of $i$. Recall that $(H,\wvec)=W(M)$. 

\begin{claim}\label{claim:largeDownset} For every $1\le i\le k$, the following holds.
\begin{enumerate}[(a)]
\item Let $A_i$ be the set of arcs incident to $y_i$ in $H$, and let $w^{(i)}=\sum_{j} w_j 1_{\{e_j\in A_i\}}$ be the total weight of the arcs in $A_i$. Then, either $w^{(i)}\ge 4Kc_n$ or $|D_P[s_{y_i}]\setminus M|\ge 4Kc_n \alpha$, or $|U_P[s_{y_i}]\setminus M|\ge 4Kc_n\alpha$.
\item Let $\{D_P(s_{y_j}), U_P(s_{y_j}):\, j<i\}$ be given and suppose that $|D_P[y_i]\cap M|<4Kc_n$.  Let ${\mathcal D}_i$ be the collection of subsets of $[n]$ disjoint from $\{D_P[s_{y_j}], U_P(s_{y_j}):\, j<i\}$ such that $|D\setminus M|\ge 4Kc_n \alpha $ and $y_i\in D$ for each $D\in {\mathcal D}_i$. Let ${\mathcal E}_D$ denote the event that  $D_P[s_{y_i}]=D$ for each $D\in {\mathcal D}_i$. Then,
$\sum_{D\in{\mathcal D}_i}\pr({\mathcal E}_D \mid M, \{D_P[s_{y_j}], U_P(s_{y_j}):\, j<i\}) \le n^{-1}\exp(-Kc_n/2)$.
\item Let $\{D_P(s_{y_j}), U_P(s_{y_j}):\, j<i\}$ be given and suppose that $|D_P[y_i]\cap M|<4Kc_n$.  Let ${\mathcal U}_i$ be the collection of subsets of $[n]$ disjoint from $\{D_P[s_{y_j}], U_P(s_{y_j}):\, j<i\}$ such that $|U\setminus M|\ge 4Kc_n \alpha $ and $y_i\in U$ for each $U\in {\mathcal U}_i$. Let ${\mathcal E}_U$ denote the event that  $U_P(s_{y_i})=U$ for each $U\in {\mathcal U}_i$. Then,
$\sum_{U\in{\mathcal U}_i}\pr({\mathcal E}_U \mid M, \{D_P[s_{y_j}], U_P(s_{y_j}):\, j<i\}) \le n^{-1}\exp(-Kc_n/2)$.
\end{enumerate}
\end{claim}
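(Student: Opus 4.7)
The plan is to handle (a) by a deterministic structural argument, prove (b) by a union bound keyed to a careful conditioning on what has already been exposed, and deduce (c) from (b) by duality. Parts (b) and (c) are the probabilistic heart of the claim; part (a) is the combinatorial bridge that lets them apply.

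For part (a), I start from the defining property of $S$, namely $|D_P[s_{y_i}]\cup U_P[s_{y_i}]|\ge 10Kc_n\alpha$, and argue that when $w^{(i)}<4Kc_n$ the intersection of this union with $M$ is only $O(w^{(i)})$. The key structural input is that $y_i\in\overline{M}$ is maximal in $P_M$, so the $M$-vertices accessed by the down-set of $s_{y_i}$ through $y_i$ are precisely those lying on the paths in $M$ encoded by the arcs of $A_i$, contributing at most $w^{(i)}+1$ vertices in total. Minimality of $|\overline{M}|$ in the choice of $M$, together with properties (P3)--(P4) of Lemma~\ref{lem:MTP}, rules out rerouting contributions that would place many far-away $M$-vertices in $D_P[s_{y_i}]$ or $U_P[s_{y_i}]$. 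The conclusion follows by pigeonhole: at least $8Kc_n\alpha$ elements of $D_P[s_{y_i}]\cup U_P[s_{y_i}]$ must live outside $M$, so one of $|D_P[s_{y_i}]\setminus M|$ and $|U_P[s_{y_i}]\setminus M|$ is at least $4Kc_n\alpha$.

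For part (b), I condition on $M$ and on all of $\{D_P(s_{y_j}),U_P(s_{y_j}):j<i\}$, and write $U^\ast$ for the union of these sets together with $V(M)$. The assumption $|D_P[y_i]\cap M|<4Kc_n$ guarantees that any $D\in{\mathcal D}_i$ acquires at least $4Kc_n\alpha$ new vertices from $[n]\setminus U^\ast$, whose $\G(n,p)$-edges remain unrevealed. I then apply Lemma~\ref{lem:upset} to the sub-poset induced on $[n]\setminus U^\ast$, whose edge distribution is still that of $\G(|[n]\setminus U^\ast|,p)$, or equivalently invoke the tail bound $\pr(|D(u)|\ge k)\le (1-q^n)^k$ from Lemma~\ref{lem:tools}(a). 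Summing over admissible $D$, the choice of the $\ge 4Kc_n\alpha$ new vertices costs at most $\binom{n}{4Kc_n\alpha}\le (en/(4Kc_n\alpha))^{4Kc_n\alpha}$, which is dwarfed by the $(1-q^n)^{4Kc_n\alpha}\le\exp(-\Omega(Kc_n\alpha))$ factor since $\alpha=Kc_ne^{c_n}$; routine algebra then produces the advertised $n^{-1}\exp(-Kc_n/2)$. Part (c) follows verbatim by passing to the dual of $P$.

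The main obstacle will be verifying that the conditioning in (b) and (c) truly decouples the unexposed randomness from what has already been revealed. I plan to formalise this by exposing, in increasing order of $j$, all $\G(n,p)$-edges with at least one endpoint in $U^\ast$; the residual distribution on $[n]\setminus U^\ast$ is then fresh, and Lemma~\ref{lem:upset} applies directly. The argument for (a) is less technical but requires a careful case analysis to exclude the possibility that $M$ has a long tendril intersecting $D_P[s_{y_i}]$ or $U_P[s_{y_i}]$ far from $y_i$, which I expect to be ultimately precluded by the minimality of $|\overline{M}|$ built into the choice of $M$.
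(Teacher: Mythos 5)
Your part (a) is essentially the paper's argument (pigeonhole on the $10Kc_n\alpha$ lower bound minus the small intersection of $D_P[s_{y_i}]\cup U_P[s_{y_i}]$ with $M$), and the reduction of (c) to (b) by duality also matches. The probabilistic core, part (b), is where your proposal breaks down, for two reasons.

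First, the sum $\sum_{D\in\mathcal{D}_i}\pr(\mathcal{E}_D\mid\cdots)$ is not a union bound: the events $\{D_P[s_{y_i}]=D\}$ are mutually exclusive, so the sum is simply $\pr(D_P[s_{y_i}]\in\mathcal{D}_i\mid\cdots)$. Your step that multiplies a tail bound by $\binom{n}{4Kc_n\alpha}$ ``for the choice of the new vertices'' is therefore not how the sum should be handled, and even on its own terms the claimed domination is backwards: $\binom{n}{4Kc_n\alpha}\approx(en/(4Kc_n\alpha))^{4Kc_n\alpha}$ grows with $n$ far faster than $(1-q^n)^{4Kc_n\alpha}$ shrinks, since the latter has no $n$-dependence at all in the base. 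The product is enormous, not small.

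Second, and more fundamentally, you do not use the requirement $y_i\in D$, which is built into $\mathcal{D}_i$ and is the sole source of the factor $n^{-1}$. Without it, the best one can hope for from a pure tail bound on $|D_P[s_{y_i}]\setminus M|$ is roughly $(1-e^{-c_n})^{4Kc_n\alpha}=\exp(-\Theta(c_n^2))$, which has no $n^{-1}$; but in the main computation this bound is multiplied by the $n^{N}$ coming from the choice of $s_{y_j}$'s, so an $n$-free bound is useless (indeed fatal when $c_n$ is bounded). The paper instead encodes $y_i\le_P s_{y_i}$ as the event $\mathcal{E}_3$ (a directed path from $y_i$ to $s_{y_i}$), splits the remaining mass of $D_P[s_{y_i}]\setminus M$ into the part reached through the $M$-vertices near $y_i$ (event $\mathcal{E}_1$, bounded via Corollary~\ref{cor:tools}(a) and independence from $\mathcal{E}_3$) and the part reached directly outside $M$ (event $\mathcal{E}_2$, bounded by the exact down-set distribution of Lemma~\ref{lem:upset}/\eqref{eq:downset} applied to the induced sub-poset on $[n]\setminus M$ together with one additional arc from $y_i$). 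In each case the path or the arc supplies the crucial $\Theta(c_n e^{c_n}/n)$ factor. Your proposal never produces that factor, so the argument as sketched cannot reach the stated bound.
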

Let ${\mathcal B}$ denote the event that $G_D[S]$ has a connected bicyclic component.
By all the claim above, $\pr({\mathcal B})$ is at most
\begin{align}
\sum_{h,\ell,a} \sum_{(\Sigma_1,\Sigma_2,R)} \sum_{\wvec} \sum_{M}   \pr({\mathcal E}_M)\sum_{{\mathcal T}}\sum_{s_{y_j}: j\in {\mathcal N}} \sum_{(D_j,U_j): j\in {\mathcal N}} \pr(\cdot), \label{eq:sum}
\end{align}
where
\[
\pr(\cdot)=\pr(\cap_{i=1}^k({\mathcal E}_{D_i}\cap {\mathcal E}_{U_i}) \mid M, \{D_P[s_{y_j}]=D_j:\, j<i\}, \{U_P[s_{y_j}]=U_j:\, j<i\}).
\]
{\em Explanation of~\eqn{eq:sum} and the notation used therein.\ } The first sum is over all integers $h,\ell,a$ that denote the sizes of $\Sigma_1$, $\Sigma_2$ and the total number of vertices in $W(M)$. The second sum is over all possible choices of $(\Sigma_1,\Sigma_2,R)$ that is consistent with $(h,\ell,a)$, where $R$ denotes the set of red vertices in $W(M)$. The third sum is over all possible weights on the arcs of $W(M)$, which indicates the lengths of the directed paths in $M$ between two vertices of $W(M)$. The next sum is over all possible choices of $M$ that are consistent with $(\Sigma_1,\Sigma_2,R,\wvec)$.  The next sum is over all choices of $k$ well-separated {\trilink}s.  Given a set of ${\trilink}$s ${\mathcal T}$, let ${\mathcal N}={\mathcal N}(\wvec)$ be the set of $j\in [k]$ such that $w^{(j)}<4Kc_n$ where $w^{(j)}$ is for $y_j$ where $(x_i,y_i,z_i)\in {\mathcal T}$. Next, we sum over all choices of $s_{y_j}$ for $j\in {\mathcal N}(\wvec)$. The final sum is over all possible subsets $D_j, U_j, j\in {\mathcal N}$ that are pairwise disjoint, except that $D_j\cap U_j=y_j$ for all $j$.
Recalling that $N(\wvec)=|{\mathcal N}(\wvec)|=|\{i: w^{(i)}<4Kc_n\}|$.
By Claim~\ref{claim:largeDownset}(a), for each $j\in {\mathcal N}(\wvec)$ either $|D_j|\ge 4Kc_n\alpha$ or $|U_j|\ge 4Kc_n\alpha$. Thus,
by Claim~\ref{claim:largeDownset}(b,c), 
\begin{align*}
\sum_{(D_j,U_j): j\in{\mathcal N}} &\pr(\cap_{i=1}^k({\mathcal E}_{D_i}\cap {\mathcal E}_{U_i}) \mid M, \{D_P[s_{x_j}]=D_j, U_P[s_{x_j}]=U_j:\, j<i\})\\
&\le \Big(2n^{-1}\exp(-Kc_n/2)\Big)^{N(\wvec)}.
\end{align*}
The number of choices for $(s_j)_{j\in {\mathcal N}}$ is at most $n^{N(\wvec)}$, and the number of choices for ${\mathcal T}$ is at most $2^a$ as shown earlier. Thus, by Claim~\ref{claim:M},
\begin{align}
\pr({\mathcal B})&\le \sum_{h,\ell,a} \sum_{(\Sigma_1,\Sigma_2,R)} \sum_{\wvec} \left(\prod_{j=1}^{a+1} \frac{w_j}{n} c_n^{w_j}/w_j!\right) n^{N(\wvec)} 2^a \Big(2n^{-1}\exp(-Kc_n/2)\Big)^{N(\wvec)}\nonumber\\
&=  \sum_{h,\ell,a} \sum_{(\Sigma_1,\Sigma_2,R)} \sum_{N}\sum_{\wvec: N(\wvec)=N} \left(\prod_{j=1}^{a+1} \frac{w_j}{n} c_n^{w_j}/w_j!\right) n^{N}  2^a \Big(2n^{-1}\exp(-Kc_n/2)\Big)^{N}\nonumber\\
&\le \sum_{h,\ell,a} \sum_{(\Sigma_1,\Sigma_2,R)} \sum_{N} 2^{a+1} \left(n^{-1} c_ne^{c_n}\right)^{a+1-(k-N)/2} \left(n^{-1} e^{-Kc_n}\right)^{(k-N)/2}\nonumber\\
& \hspace{8cm} \times  n^{N}  2^a \Big(2n^{-1}\exp(-Kc_n/2)\Big)^{N}.\label{eq:sum2}
\end{align}
{\em Explanation of~\eqn{eq:sum2}.\ } We call $j\in [a+1]$ heavy if $w_j\ge 2Kc_n$ and light otherwise. Since $N(\wvec)=N$, there are $k-N$ $y_i$s in a \trilink\ in ${\mathcal T}$ such that $w^{(i)}\ge 4Kc_n$. Since each of these $y_i$s has degree two in $W(M)$, at least one of the directed path incident to $y_i$ in $M$ has length at least $2Kc_n$, i.e.\ the corresponding $j$ is heavy. Since each directed path joins two vertices in $M$, there must be at least $(k-N)/2$ heavy $j\in [a+1]$. 
Let ${\mathcal A}$ be the set of all possible assignments of ``heavy'' and ``light'' to $j\in [a+1]$ such that there are at least $(k-N)/2$ heavy elements. Obviously, $|{\mathcal A}|\le 2^{a+1}$. 
Then,
\[
\sum_{\wvec: N(\wvec)=N} \left(\prod_{j=1}^{a+1} \frac{w_j}{n} c_n^{w_j}/w_j!\right) =\sum_{A\in {\mathcal A}}\sum_{\wvec\vDash A: N(\wvec)=N} \left(\prod_{j=1}^{a+1} \frac{w_j}{n} c_n^{w_j}/w_j!\right), 
\]
where the second sum above on the right is over all $\wvec$ consistent with a given assignment $A$, and the condition that $N(\wvec)=N$. By Corollary~\ref{cor:tools}(b), for every $A$,
\[
\sum_{\wvec\vDash A: N(\wvec)=N} \left(\prod_{j=1}^{a+1} \frac{w_j}{n} c_n^{w_j}/w_j!\right) \le  \left(n^{-1} c_ne^{c_n}\right)^{a+1-(k-N)/2} \left(n^{-1} e^{-Kc_n}\right)^{(k-N)/2} , 
\]
which completes the proof for~\eqn{eq:sum2}.

Notice, by recalling that $k=\lceil a/21-1\rceil\ge a/42$, that
\begin{align*}
2^{a+1} &\left(n^{-1} c_ne^{c_n}\right)^{a+1-(k-N)/2} \left(n^{-1} e^{-Kc_n}\right)^{(k-N)/2} n^{N}  2^a \Big(2n^{-1}\exp(-Kc_n/2)\Big)^{N}\\
&\le \left(\frac{8c_ne^{c_n}}{n}\right)^{a+1} \exp\left(-\frac{Kc_n}{2} \frac{k+N}{2}\right)\\
&\le \left(\frac{8c_ne^{c_n}}{n}\right)^{a+1} \exp\left(-\frac{Kc_n}{4\cdot 42} \cdot a\right), 
\end{align*}
and that the number of possible values of $N$ is at most $a$.
Thus,
\begin{align*}
\pr({\mathcal B})&\le \sum_{h,\ell,a} \sum_{(\Sigma_1,\Sigma_2,R)} a \left(\frac{8c_ne^{c_n}}{n}\right)^{a+1} \exp\left(-\frac{Kc_n}{168} \cdot a\right) \\
&\le \sum_{h,\ell,a} \binom{n}{a} \binom{a}{h,\ell} a^3 \left(\frac{8c_ne^{c_n}}{n}\right)^{a+1} \exp\left(-\frac{Kc_n}{168} \cdot a\right) \\
&\le \sum_{h,\ell,a} \frac{8c_ne^{c_n}}{n} \frac{1}{h!\ell!} \left(\frac{8c_ne^{c_n}}{e^{Kc_n/168}}\right)^a =o(1),
\end{align*}
by choosing sufficiently large $K$ so that $8c_n e^{c_n-Kc_n/168}<1/2$. \qed \smallskip

\noindent{\em Proof of Claim~\ref{claim:smallM}.\ }  Fix $B,h,\ell$. It suffices to show that a.a.s.\ there are no connected bicyclic subgraph $M$ in $G_D[S]$ such that $W(M)$ has exactly $B$ vertices, $h$ vertices in $\overline{M}$ and $\ell$ vertices in $\underline{M}$. The number of ways to fix the set of vertices in $M$ and the vertices in  $\overline{M}$ and  $\underline{M}$ respectively is at most $O(n^B)$. There are $O(1)$ ways to fix $W(M)$ on the set of chosen vertices. Finally, to form $M$, we consider all possible directed paths $P_{xy}$ where $(x,y)$ is an arc in $W(M)$. Hence, the probability that such a  connected bicyclic subgraph $M$ exists in $G_D[S]$  is at most
\[
O(n^B) \sum_{P_{1},\ldots, P_{B+1}} \pr(\cap_{j=1}^{B+1} {\mathcal E}_{P_j} ),
\]
where the summation is over all possible internally disjoint directed paths $P_{xy}$; noting that there are exactly $B+1$ directed paths since $W(M)$ is bicyclic. By Corollary~\ref{lem:tools}, this probability is at most
\[
O(n^B) \left(c_n e^{c_n}/n\right)^{B+1}=o(1),\quad \text{as $c_n=o(\log n)$}.\qed
\]

\noindent{\em Proof of Claim~\ref{claim:count}.\ }  There are $a$ vertices, and every vertex has at most four neighbours. Specifying the set of neighbours for each vertex specifies a graph in ${\mathcal F}$. This immediately implies the upper bound $a^{4a}$. \qed
\smallskip

\noindent{\em Proof of Claim~\ref{claim:M}.\ } By definition of ${\mathcal M}$,
\[
\sum_{M\in {\mathcal M}} \pr({\mathcal E}_M) = \sum_{P_1,\ldots, P_t} \pr(\cap_{j=1}^t {\mathcal E}_{P_j}),
\]
where the summation is over all possible internally disjoint directed paths where $P_{j}$ joins $x$ and $y$ if $(x,y)=e_j$ for every $j\in[t] $. By Lemma~\ref{lem:tools},
\[
\sum_{P_1,\ldots, P_t} \pr(\cap_{j=1}^t {\mathcal E}_{P_j}) \le \sum_{P_1,\ldots, P_t} \prod_{j=1}^t \pr({\mathcal E}_{P_j}) \le \prod_{j=1}^{t} \frac{w_j}{n} \frac{c_n^{w_j}}{w_j!} \qed
\]

\noindent{\em Proof of Claim~\ref{claim:largeDownset}.\ } Suppose that $w^{(i)}<4Kc_n$. Since $s_{y_i}\in S$, either down-set or the up-set of $s_{y_i}$ has size is at least $5Kc_n\alpha$ by~\eqn{def:S}. Since $|D_P[s_{y_i}]\cap M| \le w^{(i)}+1$ and $|U_P[s_{y_i}]\cap M| =1$, it follows immediately that  either $|D_P[s_{y_i}]\setminus M|$ has size at least $5Kc_n\alpha-4Kc_n\ge 4Kc_n\alpha$, or $|U_P[s_{y_i}]\setminus M|$ has size at least $5Kc_n\alpha-1\ge 4Kc_n\alpha$.

It only remains to prove part (b), as the proof for part (c) is analogous and slightly simpler. 
Let $U$ be the set of elements in $M$ that are in the (closed) down-set of $y_i$. As $K$ is some arbitrary large constant, we may assume that it is chosen so that $4Kc_n$ is an integer. By assumption $|U|\le 4Kc_n-1$. Given $M, \{D_P[s_{y_j}], U_P[s_{y_j}]:\, j<i\}$, let $\E_1$ be the event that $|D_P[U]|\ge (4Kc_n-1)\alpha$, $\E_2$ be the event that $|D_P[s_{y_i}]\setminus U|\ge \alpha$, and $\E_3$ be the event that there is a directed path from $y_i$ to $s_{y_i}$.
Finally, let $\E$ denote the event that $D_P[s_{y_i}]$ is
disjoint from $\{D_P[s_{y_j}], U_P[s_{y_j}]:\, j<i\}$, and $|D_P[s_{y_i}]\setminus M|\ge 4Kc_n\alpha$.
Note that $\sum_{D\in{\mathcal D}_i}\pr({\mathcal E}_D \mid M, \{D_P[s_{y_j}], U_P[s_{y_j}]:\, j<i\})$ is exactly $\pr(\E)$. Moreover, 
$\cup_{D\in {\mathcal D}_i}{\mathcal E}_D$ implies $(\E_1\cap\E_3)\cup (\E_2\cap \E_3)$.  Part (b) follows from the following bounds by choosing sufficiently large $K$:
\begin{align}
\pr(\E_1\cap\E_3) & =  O(n^{-1} c_n^2 e^{c_n}) \exp(-Kc_n),  \label{eq:E1}  \\
\pr(\E_2\cap\E_3) &=O\left(\frac{c_n}{n} \exp(-Kc_n)\right).  \label{eq:E2}
\end{align}

{\em Proof of~\eqn{eq:E1}.\ } It is obvious that $\E_1$ and $\E_3$ are independent and thus $\pr(\E_1\cap\E_3)=\pr(\E_1)\pr(\E_3)$. By Corollary~\ref{cor:tools} (b), $\pr(\E_3)\le c_n e^{c_n}n^{-1}$. The probability $\E_1$ is obviously maximized when $U$ is the largest $|U|$ elements in $[n]$ in the natural order. Moreover, since $|U|\le 4Kc_n-1$ and $|D_P[U]|\ge (4Kc_n-1)\alpha$, at least one of the element in $U$ must have a down-set of size at least $\alpha$. Hence, 
$$\pr(\E_1)\le |U| \cdot\pr(\text{element $n$ has a down-set of size $\alpha$})\le 4Kc_n\exp(-Kc_n)$$
by Corollary~\ref{cor:tools} (a). 

{\em Proof of~\eqn{eq:E2}.\ } Let $P'$ be the sub-poset induced by the elements in $[n]\setminus M$. Then,
\[
\pr(\E_2\cap\E_3) = \sum_{D} \pr(D_{P'}[s_{y_i}]=D) \pr(\text{there is a directed arc joining $y_i$ and $D\cup \{s_{y_i}\}$}),
\]
where the summation is over all $D$ as a subset of $[n]$ disjoint from $\{D_P[s_{y_j}], U_P[s_{y_j}]:\, j<i\}$ and from $M$ where $|D|\ge \alpha-2$ (the subtraction of 2 is for $y_i$ and $s_{y_i}$).  Thus, by~\eqn{eq:downset},
\begin{align*}
\pr(\E_2\cap\E_3) &=\sum_{s\ge \alpha-2}  \sum_{D: |D|=s} \pr(D_{P'}[s_{y_i}]=D) (1-q^{s+1}) \\
&\le \sum_{\alpha-2\le s\le n^{1/3}}  (1-q^{s+1}) q^{n-s} \prod_{j=0}^{s-1}(1-q^{j+1})  \prod_{i=1}^{s} \frac{1-q^{n-s+i}}{1-q^i} + \pr(|D_{P'}[s_{y_i}]|>n^{1/3}).
\end{align*} 
By Lemma~\ref{lem:tools}(a) and the assumption that $c_n=o(\log n)$, $\pr(|D_{P'}[s_{y_i}]|>n^{1/3})\le n^{-1} e^{-Kc_n}$. Now consider $s$ such that $\alpha-2\le s\le n^{1/3}$, and we obtain 
\begin{align*}
(1-q^{s+1})q^{n-s} &\sim \frac{c_ns}{n} e^{-c}  
\end{align*}
and
\begin{align*}
 \prod_{j=0}^{s-1}(1-q^{j+1})   \prod_{i=1}^{s} \frac{1-q^{n-s+i}}{1-q^i} &=  \prod_{i=1}^{s} (1-q^{n-s+i}) \\
 &= \prod_{i=1}^{s} (1-e^{-c_n}) \left(1+O\left(\frac{c_n}{n}(s-i)\right)\right) \sim (1-e^{-c_n})^s.
\end{align*}
It follows that
\begin{align*}
\pr(\E_2\cap\E_3) &
\le \sum_{\alpha-2\le s\le n^{1/3}}  (1+o(1)) \frac{c_ns}{n} e^{-c}   (1-e^{-c_n})^s +n^{-1} e^{-Kc_n}\\
& = O\left(\frac{c_ne^{-c_n}}{n}  (1-e^{-c_n})^{Kc_ne^{c_n}}\right) +n^{-1} e^{-Kc_n} = O\left(\frac{c_n}{n} \exp(-Kc_n)\right).\qed
\end{align*}






\begin{thebibliography}{10}

\bibitem{AS} A. Abram and A. Segovia. \newblock Dimension of unicycle posets. \newblock {\em arXiv preprint arXiv:2505.16837}, 2025.

\bibitem{albert1989random}
M.~H. Albert and A.~M. Frieze.
\newblock Random graph orders.
\newblock {\em Order}, 6:19--30, 1989.

\bibitem{alon1994linear}
N.~Alon, B.~Bollob{\'a}s, G.~Brightwell, and S.~Janson.
\newblock Linear extensions of a random partial order.
\newblock {\em The Annals of Applied Probability}, pages 108--123, 1994.

\bibitem{barak1984maximal}
A.~B. Barak and P.~Erd{\"o}s.
\newblock On the maximal number of strongly independent vertices in a random
  acyclic directed graph.
\newblock {\em SIAM Journal on Algebraic Discrete Methods}, 5(4):508--514,
  1984.

\bibitem{bollobas1995width}
B.~Bollob{\'a}s and G.~Brightwell.
\newblock The width of random graph orders.
\newblock {\em Mathematical Scientist}, 20(2):69--90, 1995.

\bibitem{bollobas1997dimension}
B.~Bollob{\'a}s and G.~Brightwell.
\newblock The dimension of random graph orders.
\newblock {\em The Mathematics of Paul Erd{\"o}s II}, pages 51--69, 1997.

\bibitem{bollobas1997structure}
B.~Bollob{\'a}s and G.~Brightwell.
\newblock The structure of random graph orders.
\newblock {\em SIAM Journal on Discrete Mathematics}, 10(2):318--335, 1997.

\bibitem{brightwell2016mathematics}
G.~Brightwell and M.~Luczak.
\newblock The mathematics of causal sets.
\newblock In {\em Recent Trends in Combinatorics}, pages 339--365. Springer,
  2016.

\bibitem{dhar1978entropy}
D.~Dhar.
\newblock Entropy and phase transitions in partially ordered sets.
\newblock {\em Journal of Mathematical Physics}, 19(8):1711--1713, 1978.

\bibitem{erdos1991dimension}
P.~Erdos, H.~A. Kierstead, and W.~T. Trotter.
\newblock The dimension of random ordered sets.
\newblock {\em Random Structures \& Algorithms}, 2(3):253--275, 1991.

\bibitem{felsner2015dimension}
S.~Felsner, W.~T. Trotter, and V.~Wiechert.
\newblock The dimension of posets with planar cover graphs.
\newblock {\em Graphs and Combinatorics}, 31(4):927--939, 2015.

\bibitem{furedi1986dimensions}
Z.~F{\"u}redi and J.~Kahn.
\newblock On the dimensions of ordered sets of bounded degree.
\newblock {\em Order}, 3(1):15--20, 1986.

\bibitem{kleitman1970number}
D.~Kleitman and B.~Rothschild.
\newblock The number of finite topologies.
\newblock {\em Proceedings of the American mathematical Society},
  25(2):276--282, 1970.

\bibitem{kleitman1979phase}
D.~Kleitman and B.~Rothschild.
\newblock A phase transition on partial orders.
\newblock {\em Physica A: Statistical Mechanics and its Applications},
  96(1-2):254--259, 1979.

\bibitem{kleitman1975asymptotic}
D.~J. Kleitman and B.~L. Rothschild.
\newblock Asymptotic enumeration of partial orders on a finite set.
\newblock {\em Transactions of the American Mathematical Society},
  205:205--220, 1975.

\bibitem{meyer1993spherical}
D.~A. Meyer.
\newblock Spherical containment and the minkowski dimension of partial orders.
\newblock {\em Order}, 10:227--237, 1993.

\bibitem{promel2001phase}
H.~J. Pr{\"o}mel, A.~Steger, and A.~Taraz.
\newblock Phase transitions in the evolution of partial orders.
\newblock {\em Journal of Combinatorial Theory, Series A}, 94(2):230--275,
  2001.

\bibitem{sorkin1991spacetime}
R.~D. Sorkin.
\newblock Spacetime and causal sets.
\newblock {\em Relativity and gravitation: Classical and quantum}, pages
  150--173, 1991.

\bibitem{trotter1977dimension}
W.~T. Trotter~Jr and J.~I. Moore~Jr.
\newblock The dimension of planar posets.
\newblock {\em Journal of Combinatorial Theory, Series B}, 22(1):54--67, 1977.

\bibitem{winkler1985connectedness}
P.~Winkler.
\newblock Connectedness and diameter for random orders of fixed dimension.
\newblock {\em Order}, 2:165--171, 1985.

\bibitem{winkler1985random}
P.~Winkler.
\newblock Random orders.
\newblock {\em Order}, 1:317--331, 1985.

\end{thebibliography}

\end{document}